\title{A minimal model for prestacks and morphisms of operadic algebras via Koszul duality for box operads}
\author{Lander Hermans}
\address[Lander Hermans]{Universiteit Antwerpen, Departement Wiskunde, Middelheimcampus,
	Middelheimlaan 1,
	2020 Antwerp, Belgium}
	\email{lander.hermans@uantwerpen.be}
\thanks{The author is a predoctoral fellow of the Research Foundation - Flanders (FWO),
file number 1194422N. \\
MSC2020: 18M70, 14A22 (Primary) 18M65, 14A20 (Secondary) \\
Keywords: operad, Koszul duality, prestack, cooperad, twisting morphism, $L_{\infty}$-algebra}
\begin{document}

\maketitle

\begin{abstract}
Prestacks are algebro-geometric objects whose defining relations are far from quadratic. Indeed, they are cubic and quartic, and moreover inhomogeneous. Similarly, a morphism of $\ppp$-algebras for a (nonsymmetric) Koszul operad $\mathcal{P}$ has inhomogeneous relations possibly of any arity. We show that box operads, a rectangular type of operads introduced in \cite{dinhvanhermanslowen2023}, constitute the correct framework to encode them and resolve their relations up to homotopy. Our first main result is a Koszul duality theory for box operads, extending the duality for (nonsymmetric) operads. In this new theory, the classical restriction of being quadratic is replaced by the notion of being \emph{thin-quadratic}, a condition referring to a particular class of ``thin'' operations. Our main cases of interest are the box operad $\Mor(\ppp)$ encoding morphisms of $\ppp$-algebras and the box operad $\Lax$ encoding lax prestacks as algebras. We show that both $\Mor(\ppp)$ and $\Lax$ are not Koszul by explicitly computing their Koszul complex. We then go on to remedy the situation by suitably restricting the respective Koszul dual box cooperads $\Mor(\ppp)^{\antishriek}$ and $\Lax^{\antishriek}$ to obtain our two main applications. As our second main result we establish a minimal (in particular cofibrant) model $\Mor(\ppp)_\infty$ for the box operad $\Mor(\ppp)$ as the cobar construction on a box subcooperad $\Mor(\ppp)^{\antishriek}_{p\leq 1}\subseteq \Mor(\ppp)^{\antishriek}$, hereby answering an open question by Markl \cite[Problem 9]{markl2002}. As expected, it encodes $\infty$-morphisms of $\ppp_\infty$-algebras. As our third main result we establish a minimal model $\Laxinf$ for the box operad $\Lax$ encoding lax prestacks. This sheds new light on Markl's question on the existence of an explicit cofibrant model for the operad encoding presheaves of algebras from \cite[Conj. 31]{markl2002}. Indeed, we answer the parallel question with presheaves viewed as prestacks, and prestacks considered as algebras over a box operad, in the positive.
\end{abstract}

\tableofcontents

\section{Introduction}

\subsubsection*{Motivation}
Koszul duality for operads has been proven to be a powerful tool to treat the deformation and homotopy theory of the algebras they encode \cite{ginzburgkapranov1994,  lodayvallette, markl2004, vallette2020, getzlerjones, markl2010, milles2011, vanderlaan2004}. When an operad is Koszul,
it provides an explicit minimal model, resolving the operad, and thus its algebras, up to homotopy. A highly successful application is the operad $\Assoc$ encoding associative algebras, whose minimal model is the operad $\Ainf$.
The main goal of this paper is to develop a Koszul duality theory for box operads and apply it to both the box operad encoding (lax) prestacks and the box operad encoding a morphism of $\ppp$-algebras for a nonsymmetric Koszul operad $\ppp$. In particular, the duality theory presented in this paper extends the one for operads past the standard restriction of being quadratic.

Prestacks naturally appear as models for ``noncommutative spaces'' in the sense of Van den Bergh, where they show up as deformations of the structure sheaf of a scheme and can thus be described as noncommutative deformations thereof \cite{artintatevandenbergh, lowenvandenberghhoch, vandenbergh2, dinhvanliulowen2017}. In a more global picture, they have become part of homological mirror symmetry as originally proposed by Kontsevich in his 1994 ICM address \cite{kontsevich2, aurouxorlov2008}. Meanwhile, prestacks emerged, as twisted presheaves, in the study of the Gerstenhaber-Schack (GS) complex for presheaves of algebras. Gerstenhaber and Schack introduced and studied their namesake complex in the 1980ties with an eye on developing an algebraic model for the Hochschild cohomology for schemes \cite{gerstenhaberschack, gerstenhaberschack1, gerstenhaberschack2}. However this cohomology, which computes bimodule $\Ext$-groups, does not control first order deformations as presheaves, but as prestacks. Prestacks generalise presheaves of categories, which are functors, by relaxing their functoriality up to natural isomorphisms called \emph{twists}. 
The Cohomology Comparison Theorem and the GS complex for presheaves were subsequently generalized to prestacks by Lowen and Van den Bergh \cite{lowenvandenberghCCT} and Dinh Van and Lowen \cite{DVL} respectively.

For associative algebras, operads have proven especially insightful and efficient for studying their deformation and homotopy theory. For instance, they furnish a deformation complex with the structure of a dg Lie algebra \cite{markl2010} which coincides for associative algebras with the (truncated) Hochschild complex \cite{foxmarkl1997}. Furthermore, an array of cohomology comparison results are made available, for instance involving André-Quillen cohomology  and multiple $\Ext$-interpretations \cite{milles2011, doubek2012}. 

As associative dg algebras are not homotopically invariant, resolving the encoding operad also provides a homotopically invariant, yet equivalent, notion, in casu $\Ainf$-algebras. Although they first appeared in topology \cite{stasheff1963}, $\Ainf$-structures became ubiquitous in geometry and physics since the 1990ties \cite{getzlerjones, fukaya1993, kontsevich2, kontsevichsoibelman2001, seidel2002}. Currently, their theory is well-established with applications ranging from homological mirror symmetry \cite{kontsevich2} to the study of Fourier-Mukai functors \cite{rizzardovandenbergh, rizzardovandenberghneeman2019}. Although $\Ainf$-algebras, and more generally $\Ainf$-categories,	 are more complex objects, their advantage stems from having more explicit tools available to handle their homotopy theory, for instance an explicit description of their mapping spaces, a simple description of their model category and a homotopy transfer theorem. In particular, the notion of $\infty$-morphisms turns out to be especially useful, as famously illustrated by Kontsevich formality \cite{kontsevich}. In \cite{markl2004, vallette2020, fresse1967, fresse2010}, this is shown to hold true for algebras over the minimal model of any Koszul operad. A key ingredient in these operadic approaches is a suitably explicit cofibrant replacement of the encoding operad, in casu $\Ainf$. 



In contrast, application of the powerful operadic machinery to prestacks has met obstacles early on. The restricted case of presheaves of associative algebras over a small category $\uuu$ was first considered in \cite{markl2002} by Markl, in which he defined a suitable operad and formulated the open question of finding an explicit minimal, or at least cofibrant, model. In fact, Markl posed this question in the generality of presheaves of $\ppp$-algebras for a (symmetric) operad $\ppp$. For the simplest case $\uuu=(\bullet \rightarrow \bullet)$, the single arrow category, where presheaves correspond to morphisms of $\ppp$-algebras, he obtains results by ad hoc techniques and puts forward the additional question whether these can be obtained through a suitable Koszul duality \cite[Problem 9]{markl2002}. For the particular case $\ppp= \Assoc$, the operad encoding associative algebras, he shows that the minimal model encodes exactly $\infty$-morphisms of $\Ainf$-algebras. Later, relevant results for presheaves of $\ppp$-algebras were obtained by Berger and Moerdijk  \cite{bergermoerdijk2006} \cite{bergermoerdijk2007} in restricted cases and notably by Doubek \cite{doubek2011} \cite{doubek2012} for the general case, but to the best of our knowledge, both questions as posed remain open.

In particular, these general approaches \cite{markl2002, bergermoerdijk2006, bergermoerdijk2007, doubek2011, doubek2012} seem to lack a solution that is both manageable and explicit. 
On the other hand, a large part of the success of Koszul duality can be attributed to providing such resolutions. A necessary requirement for an operad to be Koszul is that its relations are quadratic. In the present paper, we however consider two cases with highly nonquadratic and moreover inhomogeneous relations. Indeed, already encoding a single morphism of associative algebras poses a problem: its algebraic structure consists of multiplications $m$ and restrictions $f$ (encoding the morphism) satisfying the following inhomogeneous quadratic-cubic relation
\begin{equation} \label{functrel}
 m \circ (  f \otimes f ) = f \circ m.
 \end{equation}
 For the case of a morphism of $\ppp$-algebras, there is moreover a quadratic-$(n+1)$-ary relation for every generator of $\ppp$ of arity $n$. More generally, for presheaves the restrictions $f$ additionally satisfy a quadratic-linear relation. 

For prestacks, whose algebraic structure in addition to the components $m$ and $f$ involves twists $c$, the situation is even more dire: 
%
 the relations involving the twists are inhomogeneous cubic and quartic. To remedy this, in the present paper we put forward a novel Koszul duality theory for box operads. This theory generalizes the classical one for (nonsymmetric) operads, while enabling us to resolve relations of higher nature, such as the ones appearing above.

\subsubsection*{Thin-quadratic relations}

Let us start by recalling box operads and present our key insight. In their present formulation, box operads were introduced in \cite{dinhvanhermanslowen2023} by Dinh Van, Lowen and the author in order to study the higher structure on the Gerstenhaber-Schack complex. Box operads are also an enriched instance of a special type of virtual double categories \cite{CruttwellSchulman2010}, \cite{Koudenburg2020}. The latter have been consider by Leinster under the name of fc multicategories \cite{leinsterFc} \cite{Leinster2004}  and originated as instances of multicategories over a monad in the sense of Burroni \cite{burroni1971}.

In $\S \ref{parboxoperads}$, following \cite{dinhvanhermanslowen2023}, we introduce box operads as algebras over a $\N^3$-coloured symmetric operad $\boxop$. Instead of being governed by trees, they are governed by stackings of labeled boxes, as we now will explain. A box operad $\bbb$ consists of a collection of dg modules $(\bbb(p,q,r))_{p,q,r \in \N}$, whose elements $x$ for each label $(p,q,r)$ are pictured as
$$ \scalebox{0.8}{$\tikzfig{fcRectangle_x}$} \quad$$
Note the bottom label is always $1$, so we omit it. Composition is given by stacking of labeled boxes
$$ \scalebox{0.8}{$\tikzfig{2Level_stack_x}$} \quad \longmapsto \quad  \scalebox{0.7}{$\tikzfig{2Level_stack_result_x}$}  .$$
combining both a vertical and horizontal composition simultaneously. 
Further, we show that operads form a full subcategory of box operads by considering a corolla of arity $q$ as a box with label $(0,q,0)$ (Proposition \ref{propthinoperads}). We call these boxes \emph{thin}, depicting them as 
$$ \scalebox{0.9}{$\tikzfig{thinbox_intro}$} \quad $$
with two parallel lines (reminiscent of the equality sign) for the sides labeled $0$.

Thin boxes play a key role in our new Koszul duality as follows. For operads, a quadratic relation consists of stackings of exactly two thin boxes, resulting in a single thin box. For box operads, we define \emph{thin-quadratic} stackings consisting of $n$ boxes such that the number of thin boxes (also counting the resulting box) decreases by exactly $1$. Concretely, thin-quadratic stackings appear in three forms
$$ \scalebox{0.8}{$\tikzfig{P_2_1}$} \quad, \quad \scalebox{0.7}{$\tikzfig{partial_comp}$} \quad \text{ and } \quad \scalebox{0.8}{$\tikzfig{P_n}$} $$
where $i$ denotes the vertical input at which the upper box is composed. The first class corresponds to classical trees with two vertices (represented by two thin boxes). The second class consists of stackings of two boxes for which the bottom box is not thin and the last class consists of stackings with only a single thin stacking at the bottom and an arbitrary number of non-thin boxes on top. Note that their definition (Definition \ref{defthinquadratic}) also involves a necessary topological condition. Our new Koszul duality theory will apply to box operads with thin-quadratic relations, thus extending the classical quadratic setting.

\subsubsection*{Koszul duality for box operads}
Let us now explain the main ingredients of Koszul duality for box operads and their novelties. These results constitute the technical heart of the present paper. Besides our application to prestacks and morphisms of operadic algebras, we consider these results of independent interest.

It is important to mention that we generally follow the structure, notations and naming put forth in \cite[Ch. 6 \& 7]{lodayvallette}. 

In $\S \ref{parbarcobar}$, we establish a bar-cobar adjunction between the categories of \emph{augmented} dg box operads and \emph{conilpotent} dg box cooperads. The cobar functor is defined by endowing the free dg box operad on the coaugmentation coideal of a conilpotent dg box cooperad with a differential induced by its comultiplication. Our construction differs from the usual cobar functor in two ways: it desuspends the cooperad only in thin arities and its differential factors through the space of thin-quadratic stackings as opposed to quadratic trees. Interestingly, the proof of the differential coincides on the level of stackings with the proof of the $\Linf$-structure from \cite{dinhvanhermanslowen2023}. Dually, we obtain the bar functor via the cofree conilpotent dg box cooperad.

Next, in $\S \ref{partwisting}$, we establish an incarnation of the aforementioned $\Linf$-structure on the convolution box operad and obtain a notion of twisting morphism as Maurer-Cartan element. The space of twisting morphisms in turn realises the bar-cobar adjunction (forming a so-called ``Rosetta stone" \cite[Thm. 6.5.10]{lodayvallette}). 

In $\S \ref{parcomplex}$, for a dg box operad $\ccc$ and a conilpotent dg box cooperad $\ccc$, we associate to a twisting morphism $\ka: \ccc \longrightarrow \bbb$ a (right) \emph{twisted complex} $\ccc \smsquaresub{\ka} \bbb$. Here, an essential result is that dg box operads can equivalently be defined as monoids in a skew monoidal category $(\dgMod(k)^{\N^3},\smsquare,k)$, that is, a monoidal category for which the associator and unitors are not necessarily invertible (see for instance \cite{szlachanyi2012, lackstreet2014}).
 The twisted complex $\ccc \smsquaresub{\ka} \bbb$ corresponds to endowing the free right $\bbb$-module on $\ccc$ with a differential by factoring through thin-quadratic stackings and applying $\ka$. In contrast to the situation for operads, the left twisted complex seems to not be available, which we attribute to the lack of free left $\bbb$-modules due to the skewness. On the other hand, the twisted complex for box operads contains a useful subcomplex $\ccc \smsquareconnsub{\ka} \bbb \sub \ccc \smsquaresub{\ka}\bbb$, called the \emph{connected} twisted complex. Indeed, we show the connected bar complex $\B \bbb \smsquareconnsub{\pi} \bbb$ is acyclic (Proposition \ref{acyclic}), but lack a similar result for the full bar complex $\B \bbb \smsquaresub{\pi} \bbb$ (Remark \ref{rmkacyclic}). Conversely, we are able to show that $\ccc \smsquaresub{\iota}\Om \ccc$ is acyclic (Proposition \ref{acyclic}), but not its connected subcomplex. 

In $\S \ref{parfundamental}$, we establish our first main comparison result, a partial fundamental theorem for box operads (Theorem \ref{partialfundamental}). It relates the homology of the connected twisted complex to the question whether the bar construction is a fibrant resolution of the box cooperad. Unfortunately, we are more interested in the dual statement: whether the cobar construction defines a cofibrant resolution of the box operad under consideration. We identify two problems: for operads, this is obtained via the operadic comparison lemma \cite[Lem. 6.4.13]{lodayvallette}, whose proof only extends to box operads for the connected twisted complex (Proposition \ref{comparisonlemma}). On the other hand, the connected cobar complex $\ccc \smsquareconnsub{\iota} \Om \ccc$ is not necessarily acyclic. We solve this problem by considering a smaller subcategory of \emph{inclined} box operads. The notion of being inclined solely pertains to the underlying $\N^3$-collection of dg modules, that is, inclined box operads do not contain any element in arity $(p,q,r)$ for which $p < r$. Our main result is that we can salvage enough of the operadic comparison lemma (Proposition \ref{operadiclemmainclined}) to establish a fundamental theorem for inclined box operads (Theorem \ref{thminclinedfundamental}). A key point is that for inclined box operads the grading $p+q$ for the label $(p,q,r)$ defines an efficient filtration. We end this section by defining thin-quadratic box operads, their Koszul dual box cooperad and the associated Koszul complex.

\subsubsection*{Application $1$: morphisms of $\ppp$-algebras}

Let us now outline our approach for our first application: morphisms of $\ppp$-algebras for a (nonsymmetric) Koszul operad $\ppp$. In \S \ref{section:applicationmorphismPalgebras}, we describe a box operad $\Mor(\ppp)$ encoding such a morphism, to which our Koszul duality applies. For a quadratic presentation of $\ppp$ with module of generators $E$ and relations $R$, the box operad $\Mor(\ppp)$ is generated by elements 
$$  \scalebox{1}{$\tikzfig{morP_e}$} \quad \text{ for } e \in E(q) \text{ and } \quad \scalebox{1}{$\tikzfig{lax_f}$} \quad$$
with labels $(0,q,0)$ and $(1,1,1)$, and besides the quadratic relations $R$, also the thin-quadratic relations
$$\quad \scalebox{1}{$\tikzfig{morP_funct1}$} \quad  =  \quad \scalebox{1}{$\tikzfig{morP_funct2}$}\quad. $$
For $\ppp= \Assoc$, the above equation for the multiplication $m$ is the box operadic version of \eqref{functrel}. 
However, we prove the following theorem.
\begin{theorem}[Proposition \ref{laxnotkoszul}]
The box operad $\Mor(\ppp)$ is not Koszul.
\end{theorem}
More precisely, the cobar construction $\Om(\Mor(\ppp)^{\antishriek})$ on the Koszul dual box cooperad $\Mor(\ppp)^{\antishriek}$ of $\Mor(\ppp)$ turns out to contain too many generators. We remedy the situation by restricting to a box subcooperad $\Mor(\ppp)^{\antishriek}_{r \leq 1}$ of $\Mor(\ppp)^{\antishriek}$ and proving that its associated twisted complex is acyclic. The fundamental theorem for inclined box operads (Theorem \ref{thminclinedfundamental}) then implies the following main theorem.
\begin{theorem}[Theorem \ref{thmlaxinf}]
The dg box operad $\Mor(\ppp)_{\infty}:= \Om \Mor(\ppp)^{\antishriek}_{p \leq 1}$ is a minimal model of $\Mor(\ppp)$ and encodes $\infty$-morphisms of $\ppp_{\infty}$-algebras. 
\end{theorem}
 This answers the open question by Markl \cite[Problem 9]{markl2002} for the case of nonsymmetric operads.
Notice that in $\S 2.4$ we obtain a model structure for box operads (Proposition \ref{propmodel}), providing a suitable framework encompassing minimal models.

\subsubsection*{Application $2$: prestacks}
Let us now outline our approach to prestacks using box operads, which motivated the study of the latter. It turns out that this case is considerably more difficult than Application $1$, thus requiring more intricate arguments.  First, we remark that from the deformation perspective it suffices to study \emph{lax} prestacks, that is, lax functors taking values in the $2$-category of linear categories. Indeed, the invertibility of the twists $c$ of prestacks may be disregarded as it is stable under deformation. In $\S \ref{parlax}$, we first describe an inclined box operad $\Lax$ encoding lax prestacks (Proposition \ref{proplaxalgebra}) to which our Koszul duality applies. It is generated by three elements 
$$\scalebox{1}{$\tikzfig{lax_m}$} \quad,  \quad \scalebox{1}{$\tikzfig{lax_f}$} \quad \text{ and } \quad \scalebox{1}{$\tikzfig{lax_c}$}\quad, $$
with labels $(0,2,0), (1,1,1)$ and $(2,0,1)$ and thin-quadratic relations. For instance, the quadratic-cubic relation \eqref{functrel} for the multiplication $m$ is part of the relations, as is the following cubic-quartic relation
$$ \scalebox{1}{$\tikzfig{intro_lax_nat1}$} \quad  =  \quad \scalebox{1}{$\tikzfig{intro_lax_nat2}$}\quad$$
which encodes the `naturality' of the twists $c$. 
However, we prove the following theorem.
\begin{theorem}[Theorem \ref{laxnotkoszul}]
The box operad $\Lax$ is not Koszul.
\end{theorem}
More precisely, the cobar construction $\Om(\Lax^{\antishriek})$ on the Koszul dual box cooperad $\Lax^{\antishriek}$ of $\Lax$ turns out to contain too many generators again. We remedy the situation by restricting to a subspace $\Lax^{\antishriek}_{r \leq 1}$ and endowing the graded box suboperad 
$$\Laxinf := \Tboxop(\Lax^{\antishriek}_{r \leq 1}) \sub \Om(\Lax^{\antishriek})$$
 with a differential through the projection $\Om(\Lax^{\antishriek}) \twoheadrightarrow \Laxinf$. In contrast with the first application, $\Lax_{r\leq 1}^{\antishriek}$ is no longer a box cooperad. Nonetheless, the proofs of our new Koszul duality restrict well to this setting, resulting in the following main theorem.
\begin{theorem}[Theorem \ref{thmlaxinf}]
The dg box operad $\Laxinf$ is a minimal model of $\Lax$. 
\end{theorem}
This relates to Application $1$ as follows: we have a commuting diagram of dg box operads
$$ 
\begin{tikzcd}
\Laxinf \arrow[r, "\sim"]                             & \Lax                         \\
\Mor(\Assoc)_\infty \arrow[r, "\sim"] \arrow[u, hook] & \Mor(\Assoc) \arrow[u, hook]
\end{tikzcd}$$
where the vertical maps are arity-wise injections and the horizontal maps are quasi-isomorphisms.

Following \cite{markl2004}, we call $\Laxinf$-algebras \emph{strongly homotopy lax prestacks} or \emph{lax prestacks up to homotopy}. 

Let us elaborate on the concept of $\Laxinf$-algebras a little further. $\Laxinf$ admits a simple description: it is freely generated by elements $m_{p,q}$ in degree $p+q-2$ for every $p+q \geq 2$, with an explicit differential (see \eqref{difflaxinf}). The generators $(m_{0,2}, m_{1,1},m_{0,2})$ correspond to the generators $(m,f,c)$ of $\Lax$. Taking the differential of the four generators $(m_{0,3},m_{1,2},m_{2,1},m_{3,0})$ yields the four defining relations of $\Lax$ respectively, or differently said, yields homotopies witnessing them. As a result, the generators $(m_{p,q})_{p+q \geq 2}$ form a coherent set of higher homotopies witnessing higher prestack-type relations. In particular, the subset $(m_{0,q})_{q\geq 2}$ defines individual $\Ainf$-structures and $(m_{1,q})_{q\geq 1}$ encodes $\Ainf$-morphisms between them. As the diagram above shows, this corresponds exactly to the structure encoded by the box suboperad $\Mor(\Assoc)_\infty$.


\subsubsection*{Comparison to existing frameworks}

We now situate our results in the greater scheme of Koszul duality theories for existing operadic frameworks encoding operadic structures. In the past decades, a plethora of operad-related structures have appeared and found applications in a variety of fields, for instance properads \cite{vallette2007, hoffbecklerayvallette2021, merkulovwillwacher2018}, modular operads \cite{getzlerkapranov1998, barannikov2007}, cyclic operads \cite{getzlerkapranov1995}, globular operads \cite{bataninweber2011, batanin2008, batanin2007}, wheeled properads \cite{marklmerkulovshadrin2009} and dioperads \cite{gan2003}. To unify these theories multiple frameworks have been developed, notably groupoid-coloured operads \cite{petersen2013}, multicategories over a monad by Burroni \cite{burroni1971}, see also Leinster \cite{leinsterFc}, Feynmann categories by Kaufmann and Ward \cite{kaufmannward2017}, patterns by Getzler \cite{getzler2009} and operadic categories by Batanin and Markl \cite{bataninmarkl2015}.

These frameworks pin down the necessary information $\mathcal{I}$ to define operads of type $\mathcal{I}$ and a category of algebras over them. Currently, Koszul duality is available in different forms for some of them. 

A natural framework for box operads turns out to be that of coloured symmetric operads, precisely, box operads are algebras over a coloured symmetric operad $\boxop$. A Koszul duality theory for both their operads \cite{vanderlaan2004, ward2022} and algebras \cite[Ch. 11]{lodayvallette} exists, yet it requires a Koszul twisting morphism $\ka: \ccc \longrightarrow \boxop$. Unfortunately, this framework does not apply to $\boxop$ as we do not have, nor expect to have, a quadratic presentation of $\boxop$. Indeed, the quadratic presentation of the coloured symmetric operad $\Op$ encoding (nonsymmetric) operads is given by the partial composition trees. These are not available for box operads (Remark \ref{nopartialstacking}).  

For Feynmann categories, Kaufmann and Ward developed an interesting bar-cobar adjunction and a notion of master equation and twisting morphism for so-called cubical Feynmann categories \cite{kaufmannward2023}. A key ingredient here is a degree-function \cite[Def. 7.2.1]{kaufmannward2017}, that is, a function assigning a nonnegative integer to every element satisfying certain conditions. As every coloured symmetric operad induces a Feynmann category, we obtain a Feynmann category $\boxop^{\mathcal{F}}$ whose operads correspond to box operads. Again, thin boxes inspire a natural degree function
$$\deg(S) := \begin{cases} \# \text{ thin boxes} - 1 & S \text{ consist of thin boxes } \\
\# \text{ thin boxes} & \text{ otherwise} \end{cases} $$
for any stacking $S \in \boxop$. However, this function is not \emph{proper} \cite[Def. 7.2.1]{kaufmannward2017}: it is not true that $\deg(S) =0$ if and only if $S$ is the identity stacking. For instance any stacking of only non-thin boxes has degree $0$. Interestingly, these stackings also appear as an obstruction for an efficient weight grading (Remark \ref{remweight}). 

A Koszul duality for operadic categories has also been developed \cite{bataninmarkl2023, bataninmarkl2023a}. However, here too one restricts to operadic categories having a quadratic presention. Nonetheless, an interesting feature of operadic categories, uncovered by Lack in \cite{lack2018}, is that their operads are equivalently defined as monoids in a skew monoidal category, analogous to box operads (Proposition \ref{operadasmonoid}). 

\subsubsection*{Work in progress and future prospects}
We finish by discussing some work in progress. The minimal model $\Laxinf$ opens up new research avenues and applications. 

In \cite{hermanslowenOC}, we will develop a box operadic deformation complex, which requires an explicit cofibrant replacement, and apply it to $\Lax$.
In parallel with the existing $\Linf$-structures obtained by Dinh Van, Lowen and the author employing respectively operads \cite{vanhermanslowen2022} and box operads \cite{dinhvanhermanslowen2023}, we will thus obtain another $\Linf$-structured complex governing the deformations of (lax) prestacks. Moreover, we will extend the André-Quillen- and $\Ext$-cohomology results for algebras over operads from \cite{milles2011}, to algebras over box operads. Applying these results to $\Lax$ should retrieve the results from \cite{gerstenhaberschack, gerstenhaberschack1, gerstenhaberschack2} and \cite{lowenvandenberghCCT}.

Further, we initiate the study of the homotopy theory of prestacks and presheaves of categories. On the one hand, we will utilise $\Laxinf$ to answer Markl's original question \cite{markl2002} by providing an explicit cofibrant resolution $\Fun_\infty$ of the operad $\Fun$ encoding presheaves (also called diagrams). This fits into a larger program of developing homotopy diagrams (notably started in Grothendieck's famous letter ``Pursuing Stacks").

In this regard, our two applications $\Mor(\ppp)_\infty$ and $\Lax_\infty$ prompt the question whether they can be simultaneously generalised. Conjecturally, for a nonsymmetric operad $\ppp$ this would be the box operad $\Lax(\ppp)$ encoding lax functors taking values in a certain $2$-category of $\ppp$-categories, functors and natural transformations. An interesting application of resolving $\Lax(\ppp)$, would be to obtain an explicit cofibrant resolution of the (box) operad encoding presheaves of $\ppp$-algebras, similarly as to the case of presheaves of associative algebras.

On the other hand, we will study relevant model structures on the category of prestacks and strongly homotopy prestacks. Our first goal will be to obtain a homotopy transfer theorem for algebras over cofibrant box operads, extending \cite{fresse2010}. Following \cite{bergermoerdijk2006} and specifically \cite{muro2014}, we will study model structures on categories of box operads and their algebras and look into comparison results for algebras over weakly equivalent box operads. In particular, we will obtain the especially well-behaved model structure on algebras over a cofibrant box operad, in casu $\Laxinf$.


Finally, comparing our results with the results that still hold true for Feynmann categories with non-proper degree function is an interesting avenue for future work. On the other hand, a promising future project is to define an operadic category encoding box operads and studying the associated skew monoidal category.

\medskip 

\noindent \emph{Acknowledgement.} 

I would like to thank Wendy Lowen for the numerous discussions, kind guidance and support during the writing of this paper. The work is strongly influenced by the book \cite{lodayvallette} by Jean-Louis Loday and Bruno Vallette, in particular its insightful and transparent presentation of Koszul duality. I have also had the opportunity to visit Bruno Vallette at Université Paris Sorbonne Nord, which led to many valuable discussions and explanations, especially on the wide variety of aspects involved in Koszul duality. I would also like to thank Bruno for his encouragement to additionally investigate the case of a morphism of operadic algebras. I also would like to thank Vladimir Dotsenko, whose comments with Bruno in Barcelona on an early part of the work helped me pin down the peculiarities of the proposed Koszul duality. I would also like to thank Hoang Dinh Van, Pedro Tamaroff, Martin Markl, Philip Hackney, Joan Bellier-Millès, Ricardo Campos and Ezra Getzler for the valuable discussions, both off- and online. I am also grateful to Ralph Kaufmann for bringing \cite{kaufmannward2023} to my attention.

\medskip

\noindent \emph{Conventions.}

We work over a ground field $k$ of characteristic $0$. However, similar to the case of nonsymmetric operads, we expect many of the results to hold for arbitrary characterisic.

We work enriched over the monoidal category of either $k$-modules $\Mod(k)$, graded $k$-modules $\grMod(k)$ or dg $k$-modules $\dgMod(k)$. In the case of the latter two, we add the prefix ``graded'' or ``dg'' respectively. Further, we will implicitly make use of the standard embeddings $\Mod(k)\hookrightarrow \grMod(k) \hookrightarrow \dgMod(k)$. Note that a large part of the theory of box operads can be developed over a general symmetric monoidal category.

Specifically in this paper, we will call an object of the category $\dgMod(k)^{\N^3}$ a \emph{$\N^{3}$-dg module}.

Operads can either have a single colour or a set of colours, in which case we call them ``coloured operads''. Furthermore, they can come equipped with a symmetric action, in which case we call them ``symmetric operads''. When such an action is not taken into account, we emphasize this by writing ``nonsymmetric''. In general, we omit these adjectives when the correct notion of ``operad'' is clear from context. We add the adjectives ``nonsymmetric'', ``symmetric'' and `` coloured'' when we want to emphasize the extra structure (or lack thereof).

For $C$ a dg module and $z\in \Z$, denote by $s^zC:= ks \otimes C$ its $z$th suspension, where $s$ is a formal element of degree $1$. Note that its differential alters by $(-1)^z$. To fix sign conventions, we work with the Koszul sign in the graded context and we make the following choice of identification
$$ s\inv s = \Id_C =-ss\inv.$$

For coalgebraic structures, we use Sweedler's notation omitting sums and indices.

\section{Box (co)operads} \label{parboxoperads}

In this section, we introduce box (co)operads as (co)algebras over the $\N^3$-coloured symmetric operad $\boxop$ (pronounced as ``box-op") from \cite{dinhvanhermanslowen2023}. The operad $\boxop$ naturally models $(p,q,r)$-labeled stackings of boxes
$$ \scalebox{0.7}{$\tikzfig{2Level_stack}$} $$
Further, in $\S \ref{parparalgebras}$, we show that a box operad $\bbb$ naturally encodes operations on quivers over small categories, in which case we call them $\bbb$-categories (Definition \ref{defcategory}). In case each quiver has a single object, we call them $\bbb$-algebras (Definition \ref{defalgebra}. This can be understood as representing a $(p,q,r)$-labeled box as an operation from the $q$th tensor product $\A$ to $\A$, alongside a $p-$ and $r-$simplex of $\uuu$ acting on $\Ob(\A)$, the set of objects of $\A$,
$$ \scalebox{0.7}{$\tikzfig{boxAlgebra2}$} $$ 

The remaining goal of this section is to establish the necessary ingredients for the bar and cobar functors defined in $\S \ref{parbarcobar}$.

In $\S \ref{parparmonad}$, we make the monad $\Tboxop$ associated to the operad $\boxop$, which equivalently encodes box operads, explicit.

In $\S \ref{parparcomonad}$, we introduce \emph{conilpotent} box cooperads as coalgebras over a comonad $\Tboxcoopeta$. They form a subcategory of box cooperads, important for Koszul duality, which are often introduced via a converging filtration (see \cite[\S 5.7.6]{lodayvallette}). In contrast with the free box cooperad comonad, the conilpotent cofree box cooperad functor agrees with the underlying functor of $\Tboxop$, an essential feature used throughout the paper.

In $\S \ref{parparmodel}$, we show that box operads come equipped with a natural model structure, lifted from $\dgMod(k)^{\N^3}$.

Finally, in $\S \ref{parparinterplay}$, we realise (nonsymmetric) operads as \emph{thin} box operads, that is, box operads concentrated in arities $(0,q,0)$, constituting a coreflective subcategory of box operads. Further, thin, semi-thin and non-thin boxes are introduced and will play a key role in our Koszul duality.

\subsection{The dg operad $\boxop$} \label{parparboxop}

\begin{mydef}\label{defgenFc}
Let $\boxop$ be the $\N^3$-coloured dg operad generated by  
\begin{itemize}
\item for every $n,\nth{q}, p_0,\ldots,p_n,p ,r \in \N$ 
$$ C^{p_0,\ldots,p_n;p,r}_{q_1,\ldots,q_n} \in \boxop\left( (p,n-1,r),(p_0,q_1,p_1),\ldots,(p_{n-1},q_{n},p_n); (p+p_0,\sum_{i=1}^{n}q_i,r+p_n)\right)$$
\item a unit $\eta \in \boxop\left( ; \left(0,1,0\right) \right)$
\end{itemize}
and concentrated in degree $0$ with trivial differential. 
For \emph{matching} colours
$$(p,n,r),(p_0,m_1,p_1),\ldots,(p_{n-1},m_n,p_n),(p^1_0,q^1_1,p^1_1),\ldots,(p^1_{q_1-1},q^1_{m_1},p^1_{m_n}),\ldots,(p^n_0,q^n_1,p ^n_1),\ldots, (p^n_{m_n-1},q^n_{m_n},p^n_{m_n}),$$
that is, such that for $0 < i < n$ we have
$$p^i_{m_i} = p^{i+1}_0,$$
we require associativity and unit relations
\begin{enumerate}
\item $ C_{\underline{q}^1,\ldots,\underline{q}^n}^{\underline{p}^1,\underline{\hat{p}}^2,\ldots,\underline{\hat{p}}^n} \circ C_{\underline{m}}^{\underline{p};p,r} = \left( C^{p_0+p^1_0,\ldots,p_{n-1}+p^{n}_0,p_n+p^n_{m_n};p,r}_{\sum_{j=1}^{m_1} q^1_j,\ldots, \sum_{j=1}^{m_n} q^n_j} \circ ( - , C_{\underline{q}^1}^{\underline{p}^1;p_0,p_1} , \ldots, C_{\underline{q}^n}^{\underline{p}^n;p_{n-1},p_n}) \right)^{\si} $ \label{eqboxopAssoc}
\end{enumerate}
where $\si$ is the usual shuffle
$$( x; (x_1; \ofromto{x^1}{q_1}) \otimes \ldots \otimes (x_n; \ofromto{x^n}{q_n})) = ((x;x_1 \otimes \ldots \otimes x_n); x_1^1 \otimes \ldots \otimes x_{q_n}^n) $$
and $\underline{\hat{p}}^i := p^i_1,\ldots,p^i_{m_i}$ the (possibly empty) sequence omitting the first element, 
\begin{enumerate}[resume]
\item $C_n^{p,r;0,0} \circ_1 \eta = 1 =  C_{1,\ldots,1}^{0,\ldots,0;p,r} \circ (-,\eta,\ldots,\eta) $,
\end{enumerate}
where $1$ is the operadic identity.
\end{mydef}
\begin{opm}
$\boxop$ is in each arity the free dg $k$-module of the corresponding arity of the underlying operad in sets by the same generators and relations \cite[Def. 2.1]{dinhvanhermanslowen2023}.
\end{opm}

An $\boxop$-algebra is called a \emph{dg box operad}. Denote by $\dgBoxOperad$ the category of dg box operads and their morphisms. Let us spell this out.

\begin{mydef}\label{defboxoperad}
A \emph{dg box operad} $\bbb$ consists of a collection of dg modules $(\bbb(p,q,r))_{p,q,r\in \N}$, composition maps
$$\mu: \bbb(p,n,r) \otimes \bigotimes_{i=1}^n\bbb(p_{i-1},q_i,p_{i}) \longrightarrow \bbb\left(p+ p_0, \sum_{i=1}^n q_i, r + p_{n}\right),$$
and a unit
$$\eta: k \longrightarrow \bbb(0,1,0),$$ 
satisfying associativity and unit axioms: for $x,x_1,\ldots,x_n, x_1^1,\ldots,x^n_{k_n}\in \bbb$ in \emph{matching} arities (see Definition \ref{defgenFc}), we have
\begin{enumerate}
\item \label{boxop_assoc} $\mu( x; \mu(x_1; \ofromto{x^1}{q_1}) \otimes \ldots \otimes \mu(x_n; \ofromto{x^n}{q_n})) = (-1)^{ \si} \mu(\mu(x;x_1 \otimes \ldots \otimes x_n); x_1^1 \otimes \ldots \otimes x_{q_n}^n)$,
\item $\mu(x; 1 \otimes \ldots \otimes 1) = x = \mu(1;x)$,
\end{enumerate}
where $(-1)^\si$ is the Koszul sign associated to the usual shuffle. \newline
A \emph{morphism of dg box operads} $f:\bbb \longrightarrow\bbb'$ consists of a collection of dg module maps $f_{p,q,r}:\bbb(p,q,r) \longrightarrow \bbb'(p,q,r)$ commuting with the composition and unit maps.
\end{mydef}
\begin{opm}
The above definition makes sense in any symmetric monoidal category. 
\end{opm}

\subsubsection{dg box cooperads}

A $\boxop$-coalgebra is called a \emph{dg box cooperad}. Denote by $\dgBoxCooperad$ the category of dg box cooperads and their morphisms. The explicit definition of a dg box cooperad is obtained from Definition \ref{defboxoperad} by reversing the arrows.

\subsubsection{Stackings of boxes}\label{parstackings}

In \cite{dinhvanhermanslowen2023}, we provided a combinatorial description of $\boxop$ (or rather its underlying operad in sets) based on \emph{stackings of boxes}. Here, we review an intuitive definition of this notion of stacking and refer to \cite[\S 2.4]{dinhvanhermanslowen2023} for the details.

Picture a colour $(p,q,r)$ as a labeled box
$$\scalebox{0.7}{$\tikzfig{labelpqr}$}$$
having $q$ vertical inputs, a single vertical output, $p$ horizontal inputs and $r$ horizontal outputs.
A stacking $S \in \Stack((p_1,q_1,r_1),\ldots,(p_n,q_n,r_n);(p,q,r))$ consists of $(p_i,q_i,r_i)$-labeled boxes stacked by ``matching in- and outputs'' such that they form a $(p,q,r)$-labeled box, for instance
$$  \scalebox{0.85}{$\tikzfig{stacking_example}$} \quad . $$
Observe that when a side is labeled by $0$, we draw two parallel lines (reminiscent of an equality sign). In particular, when a top side has label $0$, this creates an open space that does not require to be filled in by a box. Hence, it is naturally `degenerate' and we color it grey.

$\Stack$ forms a $\N^3$-coloured symmetric operad \cite[Def. 2.10]{dinhvanhermanslowen2023}: its composition is given by substitution of stackings in boxes and the symmetric action is given by permuting vertices, i.e. $S^\si$ is the stacking obtained from $S$ by renaming box $i$ by box $\si\inv(i)$. For instance, we can compose the above stacking at box $4$ with another stacking and apply the permuration $(13)$
$$ \left(\quad\scalebox{0.85}{$\tikzfig{stacking_example}$} \quad \circ_4 \quad \scalebox{0.85}{$\tikzfig{stacking_example_2}$}  \quad \right)^{(13)} = \left(  \quad  \scalebox{0.85}{$\tikzfig{stacking_composed}$} \quad \right)^{(13)} = \quad   \scalebox{0.85}{$\tikzfig{stacking_composed_2}$}$$

By adding a unit $\eta$ of arity $(0,1,0)$ with suitable unital relations, we obtain a coloured symmetric operad $\Stack[\eta]$ isomorphic to $\boxop$ \cite[Prop. 2.17]{dinhvanhermanslowen2023}. Under this isomorphism the generator $C^{p_0,\ldots,p_n;p,r}_{q_1,\ldots,q_n}$ of $\boxop$ corresponds to the stacking
$$\scalebox{0.8}{$\tikzfig{genboxop}$}$$

We make fundamental use of two underlying structures of a stacking $S$: the \emph{vertical composite tree} $\V_S$ and the \emph{horizontal composite graph} $\Hh_S$, which we introduce next. 

Let $\Op$ be the $\N$-coloured symmetric operad encoding nonsymmetric operads \cite[\S 2.1]{dinhvanhermanslowen2023} \cite[Def. 4.5]{hawkins}, which can be defined using (full) planar trees and a unit. For a stacking $S$, its vertical composite tree $\V_S$ is built by identifying its vertices with the boxes and the edges by the vertical adjacencies. This constitutes a morphism of coloured symmetric operads
$$\V: \boxop \longrightarrow \Op$$
sending the colour $(p,q,r)$ to $q$. The vertical composite tree $\V_S$ equips the boxes of a stacking $S$ with a total order, i.e. $a \downarrow_S b$ if and only if $a$ is below $b$ or $a$ is left of $b$. We call $S$ \emph{in standard order} if $\downarrow_S$ agrees with the natural order on $\{1,\ldots,n \}$.

Let $\Pro$ be the $\N^2$-coloured symmetric operad encoding pros or asymmetric props \cite[\S 2.2]{dinhvanhermanslowen2023}, which can be defined using planar (directed) graphs. For a stacking $S$, its horizontal composite graph $\Hh_S$ is built by identifying its vertices with the boxes and the number of edges between two vertices by the number of horizontal adjacencies. This constitutes a morphism of coloured symmetric operads
$$\Hh: \boxop \longrightarrow \Pro$$
sending the colour $(p,q,r)$ to $(p,r)$.

For instance, we have 
$$S = \quad \scalebox{0.85}{$\tikzfig{example_stacking_simple}$} \quad ,\quad  \V_S = \quad \scalebox{0.85}{$\tikzfig{example_tree_simple}$}  \quad \text{ and } \quad  \Hh_S = \quad \scalebox{0.85}{$\tikzfig{example_graph_simple}$}\; .$$

\subsection{Algebras and categories over a box operad} \label{parparalgebras}

Let $\uuu$ be a small category.
\begin{mydef}\label{defQuiver}
A \emph{dg quiver $\A$ on $\uuu$} consists of the following
\begin{itemize}
\item for every object $U \in \uuu$ a dg quiver $\A(U)$, consisting of a set of objects $\Ob(\A(U))$ and a dg module of arrows $\A(U)(A,A')$ for every two objects $A,A'$,
\item for every morphism $u:U \longrightarrow V$ in $\uuu$, a function between the respective sets of objects $u\st:\Ob(\A(U)) \longrightarrow \Ob(\A(V))$. 
\end{itemize}
\end{mydef}

\medskip

\noindent \emph{Notations.}
Let $\si = (U_{0} \overset{u_{1}}{\rightarrow} U_{1} \rightarrow  \ldots   \overset{u_{p}}{\rightarrow}  U_{p} )$ be a $p$-simplex in the category $\uuu$. Let $\si_{< t}:= (u_1,\ldots,u_{t-1})$ be the first $t-1$ arrows of $\si$, and $\si_{\geq t} := (u_{t},\ldots,u_p)$ the last $p-t+1$ arrows of $\si$. 

Let $\ev: \nnn(\uuu) \longrightarrow \uuu$ be the evaluation function composing the arrows of a simplex $\si$, i.e. $\ev(\si) = u_p \circ \ldots \circ u_1$. 

Further, we have an induced function $\Ob(\A(U_{0})) \longrightarrow \Ob(\A(U_{p}))$ defined as
$$\si\st := u_p \st \circ \ldots \circ u_1 \st$$
Note that for a $0$-simplex $\si=U$, we set $\si\st$ as the identity function $\Id_{\Ob(\A(U))}$.

\medskip 
 
\begin{mydef}
For $\A$ a dg quiver over $\uuu$, we define the dg module $\End(\A)(p,q,r)$ as follows
$$\End(\A)(p,q,r) := \prod_{\substack{ \si:V \rightarrow U \; p\text{-simplex} \\ \tau:V \rightarrow U \; r\text{-simplex} \\ \ev(\si) = \ev(\tau) }}\prod_{A_0,\ldots,A_q \in \A(U)} \Hom\left( \bigotimes_{i=1}^{q} \A(U)(A_{i-1},A_{i}), \A(V)(\si\st A_0, \tau \st A_q) \right) .$$
\end{mydef}

An element $\te \in \End(\A)(p,q,r)$ can naturally be drawn as a box
$$\scalebox{0.8}{$\tikzfig{EndElements}$}$$
Hence, $\End(\A)$ will have a box operad structure, which we now make explicit.

In order to have a well-defined composition in this context, we require $\uuu$ to \emph{have finite factorisations}, that is, we assume a morphism $f$ to have a finite number of factorisations $f=gh$.

As an intermediary step, we define a horizontal composition of elements in $\End(\A)$ by tensoring over $\A$. For $\te \in \End(\A)(p,q,r)$ and $\te' \in \End(\A)(r,q',r')$, we define a set of elements $\te \otimes_{\A} \te'$: let $\si \in N_p(\uuu)$ and $\tau \in N_{r'}(\uuu)$ such that $\ev(\si) = \ev(\tau)$, and $A=(A_0,\ldots,A_{q+q'})$, then we define
$$(\te \otimes_\A \te')^{\si,\tau}_A :=  \sum_{\substack{ \ga \in N_r(\uuu) \\ \ev(\si)=\ev(\ga) = \ev(\tau) } } \te^{\si,\ga}_{(A_0,\ldots,A_q)} \otimes \te^{\ga,\tau}_{(A_{q+1},\ldots,A_{q+q'})}$$
pictured as
$$ \scalebox{0.8}{$\tikzfig{horizontalCompositionEnd}$} $$ 
Note that this set of elements is not part of $\End(\A)$. 
\begin{prop}
$\End(\A)$ comes equipped with a box operad structure:
\begin{itemize}
\item the composition map
$$\circ: \End(\A)(p,q,r) \otimes \bigotimes_{i=1}^n \End(\A)(p_{i-1},q_i,p_i)  \longrightarrow \End(\A)(p+p_0,q_1 + \ldots + q_n, r+p_n) $$
 composes morphisms of quivers after tensoring over $\A$, i.e. for respective elements $\te,\te_1,\ldots,\te_n \in \End(\A)$, a simplex $\si \in N_{p+p_0}(\A)$ and a simplex $\tau \in N_{r+p_n}(\A)$ such that $\ev(\si)= \ev(\tau)$, we obtain
$$(\te \circ (\te_1,\ldots,\te_n))^{\si,\tau} := \te^{ \si_{>p_0}, \tau_{> p_n}} \circ ( \te_1 \otimes_{\A} \ldots \otimes_{\A} \te_n)^{\si_{\leq p_0},\tau_{\leq p_n}}$$
pictured as
$$\scalebox{0.75}{$\tikzfig{boxoperadEnd}$}.$$
\item the identity map $\Id \in \End(\A)(0,1,0)$ is the unit.
\end{itemize} 
We call $\End(\A)$ the \emph{endomorphism dg box operad of $\A$}.
\end{prop}
\begin{proof}
The associativity of the composition is readily verified by the drawings.
\end{proof}
\begin{opm}
The finiteness condition on the category $\uuu$ can be omitted by working with coloured box operads. 
\end{opm}

\begin{mydef}\label{defcategory}
Let $\bbb$ be a dg box operad, a \emph{dg $\bbb$-category over $\uuu$} consists of a dg quiver $\A$ over $\uuu$  and a morphism of dg box operads $\bbb \longrightarrow \End(\A)$. 
\end{mydef}
\begin{opm}
For $\ppp$ a nonsymmetric dg operad, there exists the lesser well-known notion of dg $\ppp$-category $\A$ (see \cite[\S 5.1]{Leinster2004}): it consists of a (single) dg quiver $\A$ and for every $q\in \N$ and $A_0,A_q \in \Ob(\A)$ a chain map
$$\ppp(q) \otimes \prod_{A_1,\ldots,A_{q-1}}\A(A_{i_1},A_i) \longrightarrow \A(A_0,A_q)$$
which commutes with the composition of $\ppp$ and the composition of maps. Let $\Thin_!(\ppp)$ be the associated dg box operad concentrated in thin arities (see \S \ref{subsubsection:thinboxoperads}), i.e. $\Thin_!(\ppp)(0,q,0)=\ppp(q)$ and $0$ otherwise, then a dg $\Thin_!(\ppp)$-category over $\uuu$ consists of a collection of dg $\ppp$-categories $(\A(U))_{U \in \Ob(\uuu)}$.
\end{opm}

\subsubsection{Algebras over a box operad}

An associative algebra can be seen as a $k$-linear category with a single object. Here, we define an algebra over a box operad $\bbb$ similarly as a $\bbb$-category where every quiver has only a single object. In that case, the endomorphism box operad is easier to describe.

\begin{mydef}\label{defmoduleoverU}
A \emph{dg module $A$ over $\uuu$} is a collection of dg module $(A(U))_{U \in \Ob(\uuu)}$ indexed by the objects of $\uuu$.
\end{mydef}
Considering each dg module $A(U)$ as a dg quiver with a single object, we can describe the endomorphism box operad $\End(A)$ more easily as 
$$\End(A)(p,q,r) = \prod_{\substack{ \si:V \rightarrow U \; p\text{-simplex} \\ \tau:V \rightarrow U \; r\text{-simplex} \\ \ev(\si) = \ev(\tau) }}\Hom_k\left( \A(U)^{\otimes q}, \A(V) \right)$$
for $p,q,r \in \N$. Its composition is then simply provided by the composition of $k$-linear maps whenever the indexing simplices correspond.

\begin{mydef}\label{defalgebra}
Let $\bbb$ be a dg box operad, a \emph{dg $\bbb$-algebra over $\uuu$} consists of a dg module $A$ over $\uuu$ and a morphism of dg box operads $\bbb \longrightarrow \End(A)$. 
\end{mydef}
\begin{opm}
For $\ppp$ a nonsymmetric operad and $\Thin_!(\ppp)$ its associated box operad concentrated in thin arities (see \S \ref{subsubsection:thinboxoperads}), i.e. $\Thin_!(\ppp)(0,q,0)=\ppp(q)$ and $0$ otherwise, a dg $\Thin_!(\ppp)$-algebra over $\uuu$ consists of a collection of $\ppp$-algebras $(\A(U))_{U \in \Ob(\uuu)}$.
\end{opm}

\subsection{The monad $\Tboxop$} \label{parparmonad}

The operad $\boxop$ induces a monad $\mathcal{T}_{\boxop}$ on $\dgMod(k)^{\N^3}$ whose category of algebras coincides with $\dgBoxOperad$. As a result, box operads $\bbb$ come equipped with a multiplication and unit map
\begin{equation}\label{monadalgebra}
\Tboxop(\bbb) \overset{\mu}{\longrightarrow} \bbb \text{ and } k \overset{\eta}{\longrightarrow} \bbb
\end{equation} 
such that $\mu$ is associative and $\eta$ unital.

From here on, objects of $\dgMod(k)^{\N^3}$ are called $\N^3$-dg modules, that is, a $\N^3$-dg module $N$ is a collection of dg modules $(N(p,q,r))_{p,q,r\in \N}$.

We describe the monad $\Tboxop$ in more detail. For a $\N^3$-dg module $M$, we have 
\begin{equation}\label{freeboxop}
\Tboxop(M)(p,q,r) =  \bigoplus_{(p_i,q_i,r_i)} \left[ \boxop((p_1,q_1,r_1),\ldots,(p_n,q_n,r_n);(p,q,r)) \otimes \bigotimes_{i=1}^n M(p_i,q_i,r_i) \right]_{\Ss_n}
\end{equation}
where $[-]_{\Ss_n}$ denotes taking coinvariants. We write a general element of $\Tboxop$ as $(S;\onth{x})$ for $S \in \boxop$ and $x_1,\ldots,x_n \in M$: this is the representative of its equivalence class, where the $\Ss$-action simply permutes the labels of $S$ and the tuple $(x_1,\ldots,x_n)$. Remark that with permuting a Koszul sign appears. 

As stackings naturally define a total order on its set of vertices, we can canonically choose a representative $(S;\onth{x})$ in every class such that $S$ is in standard order (see \S \ref{parstackings}). Indeed, let $\si \in \Ss_n$ be the unique permutation such that $S^{\si}$ is in standard order, then we have
$$(S;\onth{x})= (S^{\si}; \onth{x} \cdot \si)$$
where $\onth{x} \cdot \si := (-1)^{\si(x)} x_{\si^{-1}(1)} \otimes \ldots \otimes x_{\si^{-1}(n)}$ with $(-1)^{\si(x)}$ the corresponding Koszul sign. 

The composition of the monad $\Tboxop$ is induced by the composition of $\boxop$ (which can be expressed as the substitution of stackings into boxes). As a result, $\Tboxop(M)$ is naturally a dg box operad called the \emph{free dg box operad} and it satisfies the corresponding universal property.

\subsection{Conilpotent box cooperads} \label{parparcomonad}

For the purpose of Koszul duality, we are interested in the subcategory of \emph{conilpotent} dg box cooperads which we now define.

A dg box cooperad $\ccc$ is \emph{coaugmented} if there exists a section of the counit $\eps$, that is, a morphism of dg box operads $\eta: k \longrightarrow \ccc$ such that $\eps \eta = \Id_{\ccc}$. Let $\overccc:= \Coker(\eta)$ be its \emph{coaugmentation coideal}, then $\ccc \cong \overccc \oplus k$.

We will define a conilpotent dg box cooperad as a coalgebra-structure on $\overccc$ over a comonad $\Tboxcoopeta$.

Let $\boxop\st$ be the $\N^3$-coloured symmetric dg cooperad obtained by taking the linear dual of $\boxop$ in every arity (see \cite[\S 5.7.2]{lodayvallette}). Let $\Tboxcoopst$ be the comonad on $\dgMod(k)^{\N^3}$ associated to $\boxop\st$. 

We now describe the following subcomonad $\Tboxcoop \sub\Tboxcoopst$ on $\dgMod(k)^{\N^3}$ in detail: for $M$ a $\N^3$-dg module, we define
\begin{equation*}
\Tboxcoop(M)(p,q,r) = \bigoplus_{(p_i,q_i,r_i)} \left[ \boxop\st((p_1,q_1,r_1),\ldots,(p_n,q_n,r_n);(p,q,r)) \otimes \bigotimes_{i=1}^n M(p_i,q_i,r_i) \right]^{\Ss_n}
\end{equation*}
where $[-]^{\Ss_n}$ denotes taking invariants. We write a general element of $\Tboxcoop$ as $(S;\onth{x})$ for $S \in \boxop\st$ and $x_1,\ldots,x_n \in M$: this is the representative of its equivalence class, where the $\Ss$-action simply permutes the labels of $S$ and the tuple $(x_1,\ldots,x_n)$. Note that taking the canonical representative of a class such that the stacking $S$ is in standard order, provides an isomorphism of functors $\Tboxop \cong \Tboxcoop$.

The comultiplication of $\Tboxcoop$ is induced by the decomposition of $\boxop\st$ (which can be expressed as decomposing a stacking into substackings).

\begin{opm}
Remark that the functor $\Tboxcoopst$ differs from $\Tboxcoop$ by replacing the direct sum with the product. Hence, the category of $\Tboxcoop$-coalgebras is a full subcategory of $\Tboxcoopst$-coalgebras which is a full subcategory of $\dgBoxCooperad$.
\end{opm}

Consider further the submonad $\Tboxcoopeta \sub \Tboxcoop$ where the component $(\eta;)$ corresponding to the unit $\eta \in \boxop(;(0,1,0))$ is omitted. Hence, for $M$ a $\N^3$-dg module, we have $\Tboxcoop(M) \cong \Tboxcoopeta(M) \oplus k$. 

\begin{mydef}
A \emph{conilpotent dg box cooperad} $\overccc$ is a $\Tboxcoopeta$-coalgebra. 
\end{mydef}
The following is immediate.
\begin{lemma}
For $\overccc$ a conilpotent dg box cooperad, $\ccc := \overccc \oplus k $ is a coalgebra over $\Tboxcoop$ with comultiplication
$$ \De_{\ccc} := \Id_k \oplus \De_{\overccc} : \ccc \longrightarrow \Tboxcoop(\overccc)$$
In particular, $\ccc$ is a coaugmented dg box cooperad with coaugmentation $\eta: k \hookrightarrow \overccc \oplus k$ the inclusion.
\end{lemma}
We henceforth call a dg box cooperad $\ccc$ \emph{conilpotent} if it arises as $\ccc = \overccc \oplus k$ from a conilpotent dg box cooperad $\overccc$. Denote by $\dgBoxCooperad^{\conil}$ the category conilpotent dg box cooperads and their morphisms.

\begin{prop}
Let $\ccc = \overccc \oplus k$ be a conilpotent dg box cooperad and $\phi: \ccc \longrightarrow M$ a morphism of $\N^3$-dg modules such that $\phi(\eta)= 0$, then there exists a unique morphism of dg box cooperads $\ccc \longrightarrow \Tboxcoop(M)$ given by 
$$
\begin{tikzcd}
\ccc \arrow[r, "\Delta_{\ccc}"] & \Tboxcoop(\overccc) \arrow[r, "\Tboxcoop(\phi)"] & \Tboxcoop(M)
\end{tikzcd}$$
extending $\phi$. 
\end{prop}
\begin{proof}
It suffices to observe that $\ccc\cong \overccc \oplus k, \Tboxcoop(\overccc) \cong \Tboxcoopeta(\overccc) \oplus k$ and $\Tboxcoop(M) \cong \Tboxcoopeta(M) \oplus k$. The result then follows from the comonadic description of $\overccc$.
\end{proof}

\begin{opm}
More commonly, conilpotency is defined using an exhaustive filtration, making sure that the iteration of the comultiplication stabilizes at a certain point. Analogous to operads, for $\boxop\st$-coalgebras this filtration can be defined by the number of boxes in a stacking.
\end{opm}

\subsection{Model structure} \label{parparmodel}

The category $\dgMod(k)^{\N^3}$ is equipped with the projective model structure inherited from $\dgMod(k)$: weak equivalences and fibrations are given arity-wise, that is, a weak equivalence (resp. fibration) $f:M \longrightarrow M'$ consists of a quasi-isomorphism (resp. epimorphism) $f_{p,q,r}:M(p,q,r) \longrightarrow M'(p,q,r)$ for every $p,q,r \in \N$ \cite[\S 4.5]{balchin2021}.

As we work over a field of characteristic zero, this model structure lifts to $\dgBoxOperad$.
\begin{prop}\label{propmodel}
The category $\dgBoxOperad$ carries a model structure where weak equivalences and fibrations are inherited from $\dgMod(k)^{\N^3}$ as arity-wise quasi-isomorphisms and epimorphisms respectively.
\end{prop}
\begin{proof}
As we work over a field of characteristic $0$, the projective model structure on $\dgMod(k)^{\N^3}$ transfers to $\dgBoxOperad$ \cite[Cor. 8.1.2]{whiteyau2018}.
\end{proof}
%
Both for $\dgMod(k)^{\N^3}$ and $\dgBoxOperad$, we call the above weak equivalences quasi-isomorphisms.

We introduce a number of relevant homotopical notions. Let $\bbb$ be a dg box operad.
\begin{itemize}
\item $\bbb$ is said to be \emph{quasi-free} if its underlying graded box operad is free, that is, $\bbb \cong (\Tboxop(E),d)$ for some $\N^3$-graded module of generators $E$ endowed with a differential $d$.
\item A \emph{model} for $\bbb$ is a dg box operad $\mmm$ and a quasi-isomorphism $f:\mmm \longrightarrow \bbb$ of dg box operads. It is a called a \emph{quasi-free model} if $\mmm$ is quasi-free.
\item A quasi-free model $\mmm \cong(\Tboxop(E),d)$ is a \emph{minimal model} if the restriction of $d$ to $E$ is $0$, that is, the map $\Tboxop(E) \overset{d}{\longrightarrow} \Tboxop(E) \twoheadrightarrow E$ is $0$.
\end{itemize}
   
\subsection{Operads as thin box operads} \label{parparinterplay}

\subsubsection{Thin boxes}
A box $(p,q,r)$ is \emph{thin} if $(p,q,r) = (0,q,0)$ and we draw such a box as 
$$ \scalebox{0.85}{$\tikzfig{thinbox}$} $$
A box $(p,q,r)$ is \emph{semi-thin} if $(p,q,r)= (0,q,r) $ and $r>0$ or $(p,q,r)= (p,q,0)$ and $p>0$. A box is \emph{non-thin} if it is neither thin nor semi-thin.

Thin boxes will play a crucial role throughout. In particular, thin boxes may be stacked at the $i$th input
$$  \scalebox{0.85}{$\tikzfig{partial_comp}$} \quad := \quad  \scalebox{0.85}{$\tikzfig{partial_comp_units}$} $$
which we call a \emph{partial stacking}. This is not possible for two arbitrary boxes.
%
%
%

\subsubsection{Thin box operads}\label{subsubsection:thinboxoperads}
Let 
$$\Thin: \Op \longrightarrow \boxop$$
be the map sending the colour $q$ to $(0,q,0)$ which considers a tree $T$ as a stacking of thin boxes $\Thin(T)$. It is immediate it is a morphism of symmetric coloured operads. 

The induced functor 
$$ \Thin^{*}: \dgBoxOperad(k) \longrightarrow \dgOperad(k)$$
defined as $\Thin^{*}(\bbb)(q) = \bbb(0,q,0)$, has a left Quillen functor \cite[Theorem 4.6.4]{hinich}
$$\Thin_! : \dgOperad(k) \longrightarrow \dgBoxOperad(k)  $$
defined as $\Thin_!(\ppp)(p,q,r)= \ppp(q)  \text{ if } (p,q,r) = (0,q,0)$ and $0$ otherwise.
\begin{prop}\label{propthinoperads}
\begin{enumerate}
\item $\Thin^* \circ \Thin_! =\Id_{\dgOperad(k)}$\label{thin1},
\item $\Thin_!$ embeds $\dgOperad(k)$ as coreflective subcategoy of $\dgBoxOperad(k)$.\label{thin2}
\end{enumerate}
\end{prop}
\begin{proof}
It suffices to prove the equality which is immediate from their definitions.
\end{proof}

\section{Bar and Cobar functors} \label{parbarcobar}

The aim of this section is to introduce the bar and cobar functors between \emph{augmented} dg box operads and \emph{conilpotent} dg box cooperads. Let $\bbb$ be such a dg box operad, the bar construction $\B \bbb$ is given by the cofree conilpotent dg box cooperad 
$$\left(\Tboxcoop(s_{\thin} \overline{\bbb}), d_{s_{\thin}\overline{\bbb}} + d_{\square}\right)$$
with a non-trivial differential $d_{\square}$. Let us unpack the relevant notions. 

In $\S \ref{parparinfinitesimal}$, we recall the notions of thin (de)suspension $s_{\thin}^{(-1)}$ and thin-quadratic stacking from \cite[\S 3.2]{dinhvanhermanslowen2023}. We require the latter to define the thin infinitesimal composite $M \infsquare N$ of $\N^3$-dg modules $M$ and $N$, reminiscent of the infinitesimal composite for operads \cite[\S 6.1]{lodayvallette}. Importantly, the composition of a dg box operad $\bbb$ induces a composition on the thin suspended infinitesimal composite
\begin{equation}\label{infcomp}
s_{\thin}\bbb \infsquare s_{\thin}\bbb \overset{\mu_{(1)}^{s_{\thin}}}{\longrightarrow} s_{\thin}\bbb
\end{equation}
of degree $-1$ even though $s_{\thin}\bbb \infsquare s_{\thin}\bbb$ also contains nonquadratic stackings.  

After introducing (co)derivations for box (co)operads in $\S \ref{parparderivation}$, we show in $\S \ref{parparbar}$ our main result (Lemma \ref{bardiff}): the coderivation $d_{\square}$ induced by \eqref{infcomp} is a differential, i.e. it squares to zero. Remark that the proof corresponds largely to the proof of the $\Linf$-structure for box operads from \cite[Theorem 3.8]{dinhvanhermanslowen2023}. The cobar functor is defined dually in $\S \ref{parparcobar}$.

We would also like to draw attention to a peculiarity about signs: it turns out that we have to ad hoc alter a sign in the definition of $\mu_{(1)}^{s_{\thin}}$ in order for Lemma \ref{bardiff} to hold. This feature however persist throughout the theory: it also appears implicitly in Proposition \ref{propconvolution} and explicitly in $\S 7.2$ (see Example \ref{exLaxc}).

\subsection{(Co)derivations} \label{parparderivation}

Due to their (co)monadic descriptions, (conilpotent) dg box (co)operads come equipped with a suitable theory of (co)derivations.

\subsubsection{Pseudo-linear composite}

The functors $\Tboxop \cong \Tboxcoop$ are not linear on maps as most stackings consist of more than two boxes. However, given two morphisms $f,f': M\longrightarrow M'$ in $\dgMod(k)^{\N^3}$, we can define their \emph{pseudo-linear composite} 
$$\Tboxop(f;f'): \Tboxop(M) \longrightarrow \Tboxop(M')$$
as
\begin{equation*}
\Tboxop(f;f')(S; \onth{x}) := \sum_{i=1}^n (-1)^{\eps} (S; fx_1 \otimes \ldots \otimes f'x_i \otimes \ldots \otimes fx_n) 
\end{equation*}
where $(-1)^{\eps}$ is the Koszul sign from permuting $f,f',x_1,\ldots,x_n$.

\subsubsection{Derivations}

Let $f:\bbb \longrightarrow\bbb'$ be a morphism of dg box operads, then a morphism of $\N^3$-dg modules $d: \bbb \longrightarrow \bbb'$ is a \emph{derivation with respect to $f$} if the diagram 
$$  \begin{tikzcd}
\Tboxop(\bbb) \arrow[d, "\Tboxop(f;d)"'] \arrow[r, "\mu^\bbb"] & \bbb \arrow[d, "d"] \\
\Tboxop(\bbb') \arrow[r, "\mu^{\bbb'}"]                      & \bbb'               
\end{tikzcd}$$
commutes. This holds when for every stacking $S \in \boxop$ and $\nth{x} \in \bbb$ we have that
\begin{equation*}
d( \mu^\bbb_S( \onth{x} )) = \sum_{i=1}^n (-1)^{ \sum_{j=1}^{i-1} |d||x_j|} \mu_S^{\bbb'}( fx_1 \otimes \ldots \otimes dx_i \otimes \ldots \otimes fx_n)
\end{equation*}

\begin{prop}\label{propderiv}
Let $\psi: M \longrightarrow \Tboxop(M)$ be a morphism in $\dgMod(k)^{\N^3}$, then it uniquely extends to a derivation with respect to $\Id_{\Tboxop(M)}$ as follows
$$ d_\psi: 
\begin{tikzcd}
\Tboxop(M) \arrow[rr, "\Tboxop(\eta^M;\psi)"] && \Tboxop(\Tboxop(M)) \arrow[r, "\mu^{\Tboxop(M)}"] & \Tboxop(M)
\end{tikzcd}$$
where $\eta^M : M \hookrightarrow \Tboxop(M)$ is the unit of the monad.
\end{prop}
\begin{proof}
Using the derivation property and $d_\psi \eta^M = \psi$, we compute
\begin{align*}
d_\psi = d_\psi \mu^{\Tboxop(M)} \Tboxop(\eta^M) &= \mu^{\Tboxop(M)} \Tboxop(\Id;d_\psi) \Tboxop(\eta^M)\\
&= \mu^{\Tboxop(M)} \Tboxop(\eta^M; d_\psi \eta^M)  = \mu^{\Tboxop(M)} \Tboxop(\eta^M;\psi)
\end{align*}
proving uniqueness. It is a standard computation that $d_\psi$ is indeed a derivation.
\end{proof}

\subsubsection{Coderivations}

Dually, let $f:\ccc \longrightarrow\ccc'$ be a morphism of conilpotent dg box cooperads, then a morphism of $\N^3$-dg modules $d: \ccc \longrightarrow \ccc'$ is a \emph{coderivation with respect to $f$} if the diagram 
$$  \begin{tikzcd}
\Tboxcoop(\ccc) \arrow[d, "\Tboxcoop(f;d)"']& \ccc  \arrow[l, "\De^\ccc"']  \arrow[d, "d"] \\
\Tboxcoop(\ccc')                       & \ccc' \arrow[l, "\De^{\ccc'}"']               
\end{tikzcd}$$
commutes. 
\begin{prop}\label{propcoderiv}
Let $\psi: \Tboxcoop(M) \longrightarrow M$ be a morphism in $\dgMod(k)^{\N^3}$, then it uniquely extends to a coderivation with respect to $\Id_{\Tboxcoop(M)}$ as follows
$$d^\psi: 
\begin{tikzcd}
\Tboxcoop(M) \arrow[r, "\De^{\Tboxcoop(M)}"] & \Tboxcoop(\Tboxcoopeta(M)) \arrow[rr, "\Tboxcoop(\eps^M;\psi)"] && \Tboxcoop(M)
\end{tikzcd}$$
where $\eps^M : \Tboxcoop(M) \twoheadrightarrow M$ is the counit of the comonad $\Tboxcoop$.
\end{prop}

\subsection{Thin infinitesimal composite} \label{parparinfinitesimal}

We review the notions of thin (de)suspension \cite[\S 3.2]{dinhvanhermanslowen2023} and thin-quadratic stackings \cite[\S 3.3.2]{dinhvanhermanslowen2023}. Using the latter, we define the infinitesimal composite. 

\subsubsection{Thin (de)suspension}

Let $M$ be a $\N^3$-dg module. The \emph{thin suspension} of $M$ is defined as 
 \begin{equation*}
 s_{\thin} M(p,q,r):= \begin{cases}  sM(0,q,0) & \text{ if } (p,r)=(0,0) \\ M(p,q,r) &\text{ otherwise } \end{cases}
 \end{equation*}
 Analogously, the \emph{thin desuspension} $s_{\thin}\inv M$ is defined by replacing $s$ by $s\inv$ in the above equation.

\subsubsection{Thin-quadratic stackings}
First, we define an auxiliary degree for a stacking. \newline
 Let $S \in \boxop((p_1,q_1,r_1),\ldots,(p_n,q_n,r_n);(p,q,r))$ be a stacking without semi-thin boxes, we define the \emph{thin desuspended degree}
$$\td(S) := \begin{cases} (1+ \#\text{ number of thin boxes}) - n & \text{ if } (p,r) \neq (0,0) \\ 
0 & \text{ if } (p,r) = (0,0) \end{cases}$$

\begin{mydef} \label{defthinquadratic}
A stacking $S \in \boxop$ consisting of $n$ boxes is \emph{thin-quadratic} if
\begin{enumerate}
\item $S$ does not contain semi-thin boxes,
\item $S$ has degree $2-n$,
\item $S$ is in standard order,
\item its horizontal composite graph $\Hh_S$ has exactly two connected components.
\end{enumerate}
\end{mydef}
In concrete terms, a thin-quadratic stacking $S$ consisting of $n$ boxes pertains to one of the three following classes
\begin{itemize}
\item $S$ is \emph{of type $\rom{1}$} if the stacking consists of exactly two thin boxes, i.e. the usual partial stackings
\begin{equation}\label{type1} \scalebox{0.8}{$\tikzfig{P_2_1}$} \end{equation}
\item  $S$ is \emph{of type $\rom{2}$} if it is a partial stacking of the form
\begin{equation}\label{type2} \scalebox{0.7}{$\tikzfig{partial_comp}$}
\end{equation}
where the bottom box is thin, and the top box is non-thin.
\item $S$ is \emph{of type $\rom{3}$} if it is of the form 
\begin{equation}\label{type3}
 \scalebox{0.8}{$\tikzfig{P_n}$}
  \end{equation}
  where 
  \begin{itemize}
 \item the bottom box is thin, and the $(n-1)$ other boxes are non-thin,
 \item the non-thin boxes are horizontally connected, i.e. the vertices in $\Hh_S$ corresponding to the non-thin boxes form a connected graph.
\end{itemize}   
\end{itemize}
\begin{opm}\label{nopartialstacking}
Note that thin-quadratic stackings of type $III$ remedy the lack of a partial stacking of a non-thin box $\theta'$ at the $i$th input of a box $\theta$, that is, the ``stacking"
$$ \scalebox{0.7}{$\tikzfig{partial_comp_problem}$} $$
 is not part of $\boxop$.
\end{opm}

\subsubsection{Thin infinitesimal composite}

Let $M$ and $M'$ be $\N^3$-dg modules, their \emph{thin infinitesimal composite} is defined as 
\begin{equation}
(M \infsquare M')(p,q,r) := \bigoplus_{\substack{ S \in \boxop((p_1,q_1,r_1),\ldots,(p_n,q_n,r_n);(p,q,r))\\ S \text{ thin-quadratic} }} M(p_1,q_1,r_1) \otimes \bigotimes_{i=2}^n M'(p_i,q_i,r_i)
\end{equation}
Note that $M \infsquare M \sub \Tboxop(M) \cong \Tboxcoop(M)$. Let $M \infnsquare M' $ be the summand consisting of the thin quadratic stackings consisting of $n$ boxes, then we have the decomposition
$$M \infsquare M' = \bigoplus_{n\geq 2} M \infnsquare M'.$$

\subsubsection{Thin infinitesimal (de)composition}
\label{parparparinfinitesimalcomp}
For a (conilpotent) dg box (co)operad their (de)composition induces a (de)composition to the infinitesimal composite, denoted respectively $\mu_{(1)}$ and $\De_{(1)}$. 

Let $\bbb$ be a dg box operad. Define its \emph{thin suspended infinitesimal composition}
\begin{equation*}
s_{\thin}\bbb \infsquare s_{\thin}\bbb \overset{\mu_{(1)}^{s_{\thin}}}{\longrightarrow}  s_{\thin}\bbb 
\end{equation*}
as 
\begin{equation*}
 \mu_{(1)}^{s_{\thin}}(S;\sthin x_1 \otimes \ldots \otimes \sthin x_n) := \begin{cases} (-1)^{|x_1|+1} s \mu_{S}^\bbb(x_1 \otimes x_2)  & \text{ if } S \text{ is of type } I \\
(-1)^{|x_1|} \mu_{S}^{\bbb}( x_1 \otimes x_2 ) & \text{ if } S \text{ is of type } II \\
\textcolor{orange}{-} \mu_{S}^\bbb (\onth{x}) & \text{ if } S \text{ is of type } III \end{cases}
\end{equation*}
Note that it is of degree $-1$ as it reduces the number of thin boxes by exactly $1$.

Analogously, for $\ccc$ a conilpotent dg box cooperad, we define the \emph{thin desuspended infinitesimal decomposition} 
$$\sthin\inv \ccc \overset{\De_{(1)}^{\sthin\inv}}{\longrightarrow} \sthin \ccc \infsquare \sthin \ccc$$
as 
\begin{align*}
 \De_{(1)}^{s_{\thin}\inv}(\sthin\inv c):= &\sum_{S \text{ type } I} (-1)^{|c_{(1)}|} (S; s\inv c_{(1)} \otimes s\inv c_{(2)} ) + \sum_{S \text{ type } II}  (-1)^{|c_{(1)}|+1} (S; c_{(1)} \otimes s\inv c_{(2)} )\\
 &\textcolor{orange}{-} \sum_{S \text{ type } III} (S;s\inv c_{(1)} \otimes c_{(2)} \otimes \ldots \otimes c_{(n)} )
 \end{align*}
\begin{opm}\label{rmkadaptedsign}
The introduction of thin (de)suspensions introduces a Koszul sign. However, in order for Lemmas \ref{bardiff} and \ref{cobardiff} to hold, we have adjusted the sign of type $III$ stackings by multiplying with $-1$.
\end{opm}

\subsection{Bar functor} \label{parparbar}

A dg box operad $\bbb$ is \emph{augmented} if there exists a section of the unit $\eta$, that is, a morphism of dg box operads $\eps: \bbb \longrightarrow k$ such that $\eps \eta = \Id_{\bbb}$. Let $\overline{\bbb}:= \Ker(\eps)$ be its \emph{augmentation ideal}, then $\bbb \cong \overline{\bbb} \oplus k$.

\begin{constr}\label{barconstr}
Let $(\Tboxcoop(\sthin\overbbb),d_{\sthin\overbbb})$ be the cofree conilpotent dg box cooperad. Define $d_{\square}$ as the unique coderivation induced by the degree $-1$ map
$$
\begin{tikzcd}
\Tboxcoop(\sthin\overbbb) \arrow[r, two heads] & \sthin\overbbb \infsquare \sthin \overbbb \arrow[r, "\mu_{(1)}^{\sthin}"] & \sthin \overbbb
\end{tikzcd}$$
In concrete terms, for a standard stacking $S$ and $\nth{x} \in \overbbb$, holds
\begin{multline*}
d_{\square}(S;\sthin x_1 \otimes \ldots \otimes \sthin x_n) \\ 
= \sum_{\substack{S = (S' \circ_i S'')^{\si\inv}  \\ S' \text{ standard } \\ S'' \text{ thin infinitesimal}}} (-1)^{\sum_{j=1}^{i-1} |\sthin x_j| + \si (|\sthin x|)} (S'; \sthin x_{\si (1)} \otimes \ldots \otimes \mu_{S''}^{\sthin}(\sthin x_{\si(i)} \otimes \ldots \otimes \sthin x_{\si(k+i-1)} ) \otimes \ldots \otimes \sthin x_{\si(n)}) 
\end{multline*}
Note that $\si$ is uniquely determined by $S,S'$ and $S''$ (as they are in standard order).
\end{constr}
\begin{lemma}\label{bardiff}
$d_{\square}$ is a differential, i.e. $d_{\square}^{2}=0$.

 Consequently $d_{\sthin \overbbb} +d_{\square}$ defines a differential on $\Tboxcoop(\sthin \overbbb)$.
\end{lemma}
\begin{proof}
First note that $d_{\square}$ and $d_{\sthin \overbbb}$ anti-commute. Hence, it suffices to show $d_{\square}^2 =0$. Due to symmetry, this reduces to computing the sum
\begin{equation*}\label{bareq}
 \sum_{\substack{ S,S' \\ \text{ thin infinitesimal } } } \sum_{i=1}^{k} (-1)^{\sum_{j=1}^{i-1} |\sthin x_j|}(-1)^{\si(|\sthin x|)}  \mu^{\sthin}_{S}( \sthin x_{\si(1)} \otimes \ldots \otimes \mu^{\sthin}_{S'} ( \sthin x_{\si(i)} \otimes \ldots \otimes \sthin x_{\si(i+l-1)}) \otimes  \ldots \otimes \sthin x_{\si(k+l-1)}) = 0
\end{equation*}
where $S$ and $S'$ consists of $k$ and $l$ boxes respectively, and $\si \in \Ss_{k+l-1}$ is the unique permutation such that $(S \circ_i S')^\si$ is in standard form.

As shown in \cite[Theorem 3.8]{dinhvanhermanslowen2023}, each term appears exactly twice and we have the following six cases
\begin{align}
&\scalebox{0.8}{$\tikzfig{case1_1}$}   \circ_1    \scalebox{0.8}{$\tikzfig{case1_2}$}  &=   \scalebox{0.8}{$\tikzfig{case1_3}$}  &= \quad \left(  \quad  \scalebox{0.8}{$\tikzfig{case1_4}$}   \circ_1    \scalebox{0.8}{$\tikzfig{case1_5}$} \quad  \right)^{(23)} \label{eqBar1} \\
&\scalebox{0.8}{$\tikzfig{case1_1}$}   \circ_1    \scalebox{0.8}{$\tikzfig{case1_2}$}  &=   \scalebox{0.8}{$\tikzfig{case2_1}$}  &=   \scalebox{0.8}{$\tikzfig{case1_4}$}   \circ_2    \scalebox{0.8}{$\tikzfig{case2_2}$}\label{eqBar2} \\
 &\scalebox{0.8}{$\tikzfig{case3_1}$}   \circ_1    \scalebox{0.8}{$\tikzfig{case3_2}$}  &=   \scalebox{0.8}{$\tikzfig{case3_3}$}  &= \quad \left(  \quad  \scalebox{0.8}{$\tikzfig{case3_4}$}   \circ_1    \scalebox{0.8}{$\tikzfig{case3_5}$}  \quad \right)^{(23)} \label{eqBar3}\\
   &\scalebox{0.8}{$\tikzfig{case3_1}$}   \circ_1    \scalebox{0.8}{$\tikzfig{case3_2}$}  &=   \scalebox{0.8}{$\tikzfig{case4_1}$} &=  \scalebox{0.8}{$\tikzfig{case3_4}$}   \circ_2    \scalebox{0.8}{$\tikzfig{case2_2}$}\label{eqBar4} \\
   &\scalebox{0.8}{$\tikzfig{case5_1}$}   \circ_i    \scalebox{0.8}{$\tikzfig{case5_2}$} &=   \scalebox{0.8}{$\tikzfig{case5_3}$} &= \quad \left( \quad   \scalebox{0.8}{$\tikzfig{case5_4}$}   \circ_1    \scalebox{0.8}{$\tikzfig{case5_5}$} \quad  \right)^{(i+1 \ldots k+1)}\label{eqBar5} \\
   &\scalebox{0.8}{$\tikzfig{case6_1}$}   \circ_i    \scalebox{0.8}{$\tikzfig{case5_2}$} &=   \scalebox{0.8}{$\tikzfig{case6_2_fc}$} &= \quad \left(  \quad \scalebox{0.8}{$\tikzfig{case6_3}$}   \circ_j    \scalebox{0.8}{$\tikzfig{case6_4}$} \quad \right)^{\si} \label{eqBar6}
\end{align}
We compute their signs. In general, we have $S\circ_i S' = (\tilde{S} \circ_j \tilde{S}')^\tau$ where the thin infinitesimal stackings $S,S',\tilde{S}$ and $\tilde{S}'$ consist respectively of $k,l,\tilde{k}$ and $\tilde{l}$ boxes. Then we compute 
\begin{align*}
 &  \mu^{\sthin}_{S}( \sthin x_{1} \otimes \ldots \otimes \mu^{\sthin}_{S'} ( \sthin x_{i} \otimes \ldots \otimes \sthin x_{i+l-1}) \otimes  \ldots \otimes \sthin x_{k+l-1})
 \\
 &= (-1)^{\mu^{\sthin}_S + \mu^{\sthin}_{S'}}  \sthin \mu_{S\circ_i S'}(\ofromto{x}{k+l-1})\\
 &= (-1)^{\mu^{\sthin}_S + \mu^{\sthin}_{S'}}(-1)^{ \si(|x|)} \sthin \mu_{\tilde{S} \circ_j \tilde{S}'}( x_{\si(1)} \otimes \ldots \otimes x_{\si(k+l-1)})\\
 &= (-1)^{\mu^{\sthin}_S + \mu^{\sthin}_{S'}} (-1)^{\mu^{\sthin}_{\tilde{S}} + \mu^{\sthin}_{\tilde{S}'}} (-1)^{ \si(|x|)} \\ 
 &\mu^{\sthin}_{\tilde{S}}( \sthin x_{\si(1)} \otimes \ldots \otimes \mu^{\sthin}_{\tilde{S}'} ( \sthin x_{\si(j)} \otimes \ldots \otimes \sthin x_{\si(j+l'-1)}) \otimes  \ldots \otimes \sthin x_{\si(k'+l'-1)})  
\end{align*}
We thus have to show that
\begin{equation}\label{signbardiff} (-1)^{\mu^{\sthin}_S + \mu^s_{S'}} (-1)^{\mu^{\sthin}_{\tilde{S}} + \mu^{\sthin}_{\tilde{S}'}}  (-1)^{\si(|x|)}  (-1)^{\si(|\sthin x|)} (-1)^{\sum_{t=1}^{i-1} |\sthin x_t|} (-1)^{\sum_{t=1}^{j-1} |\sthin x_{\si(t)}|} = -1
\end{equation}
Note that $|\sthin x| = |x|$ for non-thin rectangles. First, we compute $(-1)^{\mu^{\sthin}_S+ \mu^{\sthin}_{S'}}$ and $(-1)^{\mu^{\sthin}_{\tilde{S}} + \mu^{\sthin}_{\tilde{S}'}}$:
\begin{center}
\begin{tabular}{ c | c | c |  c |  c || c}
\phantom{} & $\mu^{\sthin}_S$ & $\mu^{\sthin}_{S'}$ & $\mu^{\sthin}_{\tilde{S}}$ & $\mu^{\sthin}_{\tilde{S}'}$ & $\mu^{\sthin}_S + \mu^{\sthin}_{S'} + \mu_{\tilde{S}}^{\sthin} + \mu_{\tilde{S}'}^{\sthin}$\\
\hline 
$\eqref{eqBar1}$ & $|\mu_{S'}(x_1,x_2)|+1$ & $|x_1|+1$ & $|\mu_{\tilde{S}'}(x_1,x_3)|+1$ & $|x_1|+1$ & $|x_2|+|x_3|$ \\
$\eqref{eqBar2}$ & $|\mu_{S'}(x_1,x_2)|+1$ & $|x_1|+1$ & $|x_1|+1$ & $|x_2|+1$ & $|x_1|$ \\
$\eqref{eqBar3}$ & $|\mu_{S'}(x_1,x_2)|$ & $|x_1|$ & $|\mu_{\tilde{S}'}(x_1,x_3)|$ & $ |x_1|$ & $|x_2|+|x_3|$ \\
$\eqref{eqBar4}$ &  $|\mu_{S'}(x_1,x_2)|$ & $|x_1|$ & $|x_1|$ & $|x_2|+1$ & $|x_1|+1$ \\
$\eqref{eqBar5}$ & $1$ & $|x_i|$ & $|\mu_{\tilde{S}}(x_1,\ldots, \widehat{x_{i+1}},\ldots,x_{k+1})|$ & $1$ & $\sum_{t=1}^{i-1} |x_t| + \sum_{t=i+2}^{k+1} |x_t|$\\
$\eqref{eqBar6}$ & $1$ & $|x_i|$ & $1$ & $1$ & $|x_i|+1$
\end{tabular}
\end{center}
Next, we compute for the first five cases of $(-1)^{\si(|x|)}  (-1)^{\si(|\sthin x|)} (-1)^{\sum_{t=1}^{i-1} |\sthin x_t|} (-1)^{\sum_{t=1}^{j-1} |\sthin x_{\si(t)}|}$:
\begin{center}
\begin{tabular}{c|c|c|c|c||c}
\phantom{} & $\si(|\sthin x|) $ & $\si(|x|)$ & $\sum_{t=1}^{i-1}|\sthin x_t|$ & $\sum_{t=1}^{j-1}|\sthin x_{\si(t)}|$ & \\
\hline
$(1)$ & $|\sthin x_2||\sthin x_3|$ & $|x_2||x_3|$ & $0$ & $0$ & $|x_2|+|x_3|+1$ \\
$(2)$ & $0$ & $0$ & $0$ & $|\sthin x_1|$ & $|\sthin x_1|$\\
$(3)$ & $|\sthin x_2||\sthin x_3|$ & $|x_2||x_3|$ & $0$ & $0$ & $|x_2|+|x_3|+1$ \\
$(4)$ & $0$ & $0$ & $0$ & $|\sthin x_1|$ & $|\sthin x_1|$ \\
$(5)$ & $|\sthin x_{i+1}|\sum_{t=i+2}^{k+1} |\sthin x_t|$ & $|x_{i+2}|\sum_{t=i+2}^{k+1} |x_t|$ & $\sum_{t=1}^{i-1} |\sthin x_t|$ & $0$ & $\sum_{t=1}^{i-1} |\sthin x_t| + \sum_{t=i+2}^{k+1} |x_t|$   
\end{tabular}
\end{center}
This proves equation \eqref{signbardiff} for the first five cases. In order to verify the sixth case, we look at $\si$ in more detail. It decomposes into two permutations: one which permutes the left-hand side composition into standard order
$$\scalebox{0.8}{$\tikzfig{shuffl_1}$} \quad \overset{(j \ldots i+1)}{\leadsto} \quad  \scalebox{0.8}{$\tikzfig{shuffl_2}$}$$
followed by a permutation that puts the right-hand side composition into standard order
$$\scalebox{0.8}{$\tikzfig{shuffl_3}$} \quad \overset{\phi}{\leadsto} \quad  \scalebox{0.8}{$\tikzfig{shuffl_4}$} $$
As only $x_1$ and $x_{i+1}$ are thin boxes, the difference of $\si(|x|)$ and $\si(|\sthin x|)$ revolves only around the shuffle $(j \ldots i+1)$. Hence, we have that $(-1)^{\si(|x|)} = (-1)^{ \si(|\sthin x|) + \sum_{t=i+1}^{j-1} |x_t|}$. 
As a result, we compute
\begin{equation}
(-1)^{ |x_i| +1 + \sum_{t=i+1}^{j-1} |x_t| + \sum_{t=1}^{i-1} |sx_t| + \sum_{\substack{ t=1 \\ t \neq i+1} }^{j-1} |sx_t| } = -1
\tag{\ref{eqBar6}}
\end{equation}
and thus $d_{\square}$ squares to zero.
\end{proof}

\begin{mydef}
Let $\bbb$ be a dg box operad, then its \emph{Bar construction} $\B \bbb$ is defined as the conilpotent dg box cooperad
$$\B \bbb := (\Tboxcoop(\sthin \overbbb), d_{\square} + d_{\sthin \overbbb})$$
\end{mydef}
The bar construction extends to a functor 
\begin{equation}
\B: \dgBoxOperad_{\aug} \longrightarrow \dgBoxCooperad
\end{equation}
\begin{prop}
The bar construction $\B$ preserves quasi-isomorphisms.
\end{prop}
\begin{proof}
We refer to the proof of \cite[Prop. 6.5.4]{lodayvallette}.
\end{proof}

\subsection{Cobar functor} \label{parparcobar}
We develop the dual picture to the bar construction. Let $\ccc \cong \overccc \oplus k$ be a conilpotent dg box cooperad.

\begin{constr}\label{cobarconstr}
Let $(\Tboxop(\sthin\inv\overccc),d_{\sthin\inv\overccc})$ be the free dg box operad. Define $d_{\square}$ as the unique derivation induced by the degree $-1$ map
$$
\begin{tikzcd}
\sthin\inv\overccc \arrow[r, "\De^{\sthin\inv}_{(1)}"] & \sthin\inv\overccc \infsquare \sthin\inv\overccc \arrow[r, hook] & \Tboxop(\sthin\inv\overccc)
\end{tikzcd}$$
In concrete terms, for a standard stacking $S$ and $\nth{x} \in \overbbb$, holds
$$d_\square( S; \sthin\inv x_{1} \otimes \ldots \otimes \sthin\inv x_{n} ) = \sum_{i=1}^{n} (-1)^{\sum_{j=1}^{i-1} |\sthin \inv x_j|} (S \circ_i S^{'(x_i)};  \sthin \inv x_{1} \otimes \ldots \otimes \sthin\inv x^i_{(1)} \otimes \ldots \otimes \sthin \inv x_{(l)}^i \otimes \ldots \otimes \sthin \inv x_{n})$$
where $\De(x_{i})=  S^{'(x_{i})} \otimes x^i_{(1)} \otimes \ldots \otimes x_{(l)}^i$.
\end{constr}
\begin{lemma}\label{cobardiff}
$d_{\square}$ is a differential, i.e. $d_{\square}^{2}=0$.

 Consequently $d_{\sthin\inv \overccc} +d_{\square}$ defines a differential on $\Tboxop(\sthin\inv \overccc)$.
\end{lemma}

\begin{mydef}
Let $\ccc$ be a conilpotent dg box cooperad, then its \emph{Cobar consctruction} $\Om \bbb$ is defined as the augmented dg box operad
$$\Om \bbb := (\Tboxop(\sthin\inv \overccc), d_{\sthin\inv \overccc} +d_{\square})$$
\end{mydef}
The cobar construction extends to a functor 
\begin{equation}
\Om: \dgBoxCooperad^{\conil} \longrightarrow \dgBoxOperad
\end{equation}
A conilpotent dg box cooperad $\ccc$ is \emph{$2$-connected} if it is non-negatively graded with no thin elements of degree $1$ and a single element of degree $0$ (i.e. the unit). 
\begin{prop}
The cobar construction preserves quasi-isomorphism between $2$-connected dg box cooperads, that is, it is non-negatively graded with no thin elements of degree $1$ and a single element of degree $0$ (i.e. the unit).
\end{prop}
\begin{proof}
We refer to the proof of \cite[Prop. 6.5.8]{lodayvallette}. Note, however, that the thin desuspension only affects the thin elements.
\end{proof}

\section{Twisting morphisms} \label{partwisting}

Given a dg box operad $\bbb$ and a conilpotent dg box cooperad $\ccc$, we introduce in $\S \ref{parparconvolution}$ the \emph{convolution dg box operad} $\Hom(\ccc,\bbb)$ and enhance its shifted totalisation $\prod \Hom(s_{\thin}^{-1}\ccc,\bbb)$ with a suspended $\Linf$-structure (Proposition \ref{propconvolution}). 

In $\S \ref{parpartwisting}$, we introduce the important notion of \emph{twisting morphism} as a Maurer-Cartan element of $\prod \Hom(s_{\thin}^{-1}\ccc,\bbb)$. Although they are maps $s_{\thin}^{-1}\ccc \longrightarrow \bbb$ of degree $0$, we write the set of twisting morphisms as $\Tw(\ccc ,\bbb)$ and often write $\ccc \longrightarrow \bbb$ for a twisting morphism. We also extend $\Tw(-,-)$ to a bifunctor.

 Our main result (Theorem \ref{thmrosetta}), often called the \emph{Rosetta Stone} \cite{lodayvallette}, proves that the functors $\Tw(\ccc,-)$ and $\Tw(-,\bbb)$ are represented by respectively the cobar construction $\Om \ccc$ and the bar construction $\B \bbb$. As a consequence, we obtain that the cobar functor is left adjoint to the bar functor. 

\subsection{Convolution box operad} \label{parparconvolution}

Let $\bbb$ be a dg box operad and $\ccc$ a conilpotent dg box cooperad, then the $\N^3$-dg module 
$$\Hom(\ccc,\bbb)(p,q,r) := \Hom(\ccc(p,q,r), \ppp(p,q,r))$$
with differential $d(f) := d_\bbb f - (-1)^{|f|} f d_\ccc$, comes equipped with a dg box operad structure: for $\nth{f} \in \Hom(\ccc,\bbb)$ and a stacking $S$ we define $\mu_S^{\Hom}(\onth{f})$ as 
\begin{equation}\label{convolutioncomposition}
\begin{tikzcd}
{\ccc(p,q,r)} \arrow[r, "\De_S"] & {\bigotimes_{i=1}^n \ccc(p_i,q_i,r_i) } \arrow[r, "\bigotimes_{i=1}^n f_i"] & {\bigotimes_{i=1}^n \bbb(p_i,q_i,r_i) } \arrow[r, "\mu_S"] & {\bbb(p,q,r)}
\end{tikzcd}
\end{equation}

\begin{prop}
$\Hom(\ccc,\ppp)$ is a dg box operad.
\end{prop}
\begin{proof}
Proving that $d$ is a differential, is classical. The map \eqref{convolutioncomposition} respects the composition of stackings as $\mu^{\bbb}$ and $\De^{\ccc}$ do.
\end{proof}

Consider the dg module $\prod \Hom(\sthin\inv \ccc, \bbb) := \prod_{p,q,r} \Hom(\sthin\inv \ccc,\bbb)(p,q,r)$ and define for $f_1,\ldots,f_n \in \prod \Hom(\sthin \ccc, \bbb)$ the operations
\begin{equation}
(-1)^{\sum_{i=1}^n |f_i| } \sss P_n(f_1 \otimes \ldots \otimes f_n) :  
\begin{tikzcd}
\sthin \inv \ccc \arrow[r, "\De^{\sthin \inv}"] & \sthin \inv \ccc \infnsquare \sthin\inv \ccc \arrow[r, "\otimes_{i=1}^{n} f_i"] & \bbb \infnsquare \bbb \arrow[r, "\mu"] & \bbb
\end{tikzcd}
\end{equation}
and
\begin{equation}
\sss L_n(f_1,\ldots,f_n) := \sum_{\si \in \Ss_n} (-1)^{ \si(f)} \sss P_n(f_{\si(1)},\ldots,f_{\si(n)})
\end{equation}
where $(-1)^{\si(f)}$ is the respective Koszul sign.
\begin{prop}\label{propconvolution}
The operations $(\sss L_n)_{n\geq 2}$ define a suspended $\Linf$-structure on $(\prod\Hom(\sthin\inv \ccc, \bbb),d)$. 
\end{prop}
\begin{proof}
This $\Linf$-structure is directly related to the differential of the cobar construction as follows: we have that $d_{\Om \ccc} = d_{\sthin\inv \ccc} + d_{\square}$ and $d_{\square}$ decomposes as the sum $\sum_{n\geq 2} d_n$ where $d_n$ is defined by the factorization through the summand $
\begin{tikzcd}
\sthin \inv \ccc \arrow[r, "d_n"] & s\inv \ccc \infnsquare \sthin\inv \ccc
\end{tikzcd}$. 

By setting $\sss L_1(f) = \sss P_1(f) := f d_{s\inv \ccc}$ and $d_1 := d_{s\inv\ccc}$,  we have that 
\begin{equation}
\label{eqPndsquare}
\sss P_n(f_1 \otimes \ldots \otimes f_n) = (-1)^{\sum_{j=1}^n |f_{j}|} \mu \circ (f_1\otimes \ldots \otimes f_n) \circ d_n
\end{equation}

We now relate the suspended $\Linf$-relations to the fact that $d_{\Om \ccc}$ squares to zero.

Given $f_1,\ldots,f_n \in \Hom(\sthin\inv \ccc, \bbb)$, we wish to show that
$$ \sum_{\substack{ k+l =n+1 \\ k,l\geq 1 }}\sum_{\chi \in \Sh_{k-1,l}} (-1)^{ \chi(f)} \sss L_k( \sss L_l(f_{\chi(1)},\ldots,f_{\chi(l)}),\ldots, f_{\chi(n)}) = 0  $$
and thus equivalently
$$\sum_{\substack{ k+l =n+1 \\ k,l\geq 1}}\sum_{i=1}^k \sum_{\si \in \Ss_n} (-1)^{ \si(f)+ |\sss P_l| \sum_{j=1}^{i-1}|f_{\si(j)}|} \sss P_k(f_{\si(1)},\ldots, \sss P_l(f_{\si(i)},\ldots,f_{\si(i+l-1)}),\ldots, f_{\si(n)}) = 0  $$
First, we compute 
\begin{align*}
&\sss P_k(f_{\si(1)},\ldots,\sss P_l(f_{\si(i)},\ldots,f_{\si(i+l-1)}),\ldots, f_{\si(n)}) \\
&= \mu (f_{\si(1)},\ldots, \mu (f_{\si(i)},\ldots,f_{\si(i+l-1)})d_l,\ldots,f_{\si(n)})d_k (-1)^{ \sum_{j=1}^n |f_{\si(j)}| + |\sss P_l|+ \sum_{j=i}^{i+l-1}|f_{\si(j)}|} \\
&= \mu (f_{\si(1)},\ldots,f_{\si(n)}) (\Id^{i-1}, d_l , \Id^{k-i})d_k (-1)^{|d_l|\sum_{j=i+l}^n |f_{\si(j)}|+  \sum_{j=1}^n |f_{\si(j)}| + |\sss P_l| + \sum_{j=i}^{i+l-1}|f_{\si(j)}| }
\end{align*}
The sign reduces then to $(-1)^{\sum_{j=1}^{i-1} |f_{\si(j)}| + 1}$. 
We can thus compute for every $n\geq 1$ and $\si \in \Ss_n$
\begin{align*}
&\sum_{\substack{ k+l =n+1 \\ k,l\geq 1}}\sum_{i=1}^k  (-1)^{|\sss P_l| \sum_{j=1}^{i-1}|f_{\si(j)}|} \sss P_k(f_{\si(1)},\ldots,\sss P_l(f_{\si(i)},\ldots,f_{\si(i+l-1)}),\ldots, f_{\si(n)}) \\
&= \sum_{\substack{ k+l =n+1 \\ k,l\geq 1}}\sum_{i=1}^k - \mu (f_{\si(1)},\ldots,f_{\si(n)}) (\Id^{i-1}, d_l , \Id^{k-i})d_k \\
&= - \mu (f_{\si(1)},\ldots,f_{\si(n)})\left(  \sum_{\substack{ k+l =n+1 \\ k,l\geq 1}}\sum_{i=1}^k (\Id^{i-1}, d_l , \Id^{k-i})d_k \right)
\end{align*}
Note that $d_{\Om \ccc}^2 =0$ decomposes as $\sum_{\substack{ k+l =n+1 \\ k,l\geq 1}}\sum_{i=1}^k (\Id^{i-1}, d_l , \Id^{k-i})d_k =0$ for every $n\geq 1$. Hence, the above equation is zero and thus by summing over $\si \in \Ss_n$ the equation is shown for $(\sss L)_{n\geq 1}$.

We finish the proof by showing that 
$$\partial_{d_\bbb}(P_n(f_1,\ldots,f_n))= d_\bbb P_n(f_1,\ldots,f_n) + \sum_{i=1}^n (-1)^{\sum_{j=1}^{i-1}|f_i|} P_n(f_1,\ldots,d_\bbb f_i,\ldots,f_n) = 0.$$
 Indeed, this is a simple consequence of $\mu$ being a morphism of dg box operad $\Tboxop(\bbb)\longrightarrow \bbb$ and $d_\bbb$ being a derivation:
\begin{align*}
d_\bbb P_n(f_1,\ldots,f_n) &= d_\bbb \mu (\nth{f}) d_n =  (-1)^{\sum_{j=1}^{i-1}|f_j|} \mu (f_1,\ldots,d_\bbb f_i,\ldots,f_n)d_n
\end{align*}

\end{proof}

\subsection{Twisting morphism} \label{parpartwisting}

\begin{mydef}
A map $\al: \sthin \inv \ccc \longrightarrow \bbb$ is \emph{a twisting morphism from $\ccc$ to $\bbb$} if it is a Maurer-Cartan element of $\prod \Hom(\sthin \inv \ccc, \bbb)$, that is, a map of degree $0$ satisfying the equation
$$ d(\al) + \sum_{n\geq 2} \sss P_n(\al,\ldots,\al) = 0$$
We often also write $\ccc \longrightarrow \bbb$ for a twisting morphism. In case that either $\ccc$ or $\bbb$ is (co)augmented, we require $\eps \al = 0$ and $\al \eta = 0$.
\end{mydef}

Let $\Tw(\ccc,\bbb)$ denote the set of twisting morphisms from $\ccc$ to $\bbb$.

\begin{prop}
We have a bifunctor
$$\Tw: (\dgBoxCooperad^{\conil})^{\op} \times \dgBoxOperad \longrightarrow \Set$$
sending a conilpotent dg box cooperad $\ccc$ and a dg box operad $\bbb$ to their set of twisting morphisms $\Tw(\ccc,\bbb)$. It acts on morphisms by pre- and post-composition respectively.
\end{prop}
\begin{proof}
For $f:\bbb \longrightarrow \bbb'$ a morphism of dg box operads, it suffices to show that post-composition induces a (strict) morphism of suspended $\Linf$-algebras
\begin{equation} \label{postcomp}
\prod \Hom(s_{\thin}^{-1}\ccc,\bbb) \overset{f \circ - }{\longrightarrow} \prod \Hom(s_{\thin}^{-1}\ccc,\bbb')
\end{equation}
Indeed, as a consequence, it sends a Maurer-Cartan element $\al: s_{\thin}^{-1} \longrightarrow \bbb$ to a Maurer-Cartan element $f \circ \al : s_{\thin}^{-1} \ccc \longrightarrow \bbb'$. The map \eqref{postcomp} preserves the structure maps $\sss L_n$ due to the following commutative diagram 
$$
\begin{tikzcd}
{\ccc(p,q,r)} \arrow[r, "\De_S"] & {\bigotimes_{i=1}^n \ccc(p_i,q_i,r_i) } \arrow[r, "\bigotimes_{i=1}^n f_i"] \arrow[rd, "\otimes_{i=1}^n f\circ f_i"'] & {\bigotimes_{i=1}^n \bbb(p_i,q_i,r_i) } \arrow[r, "\mu_S"] \arrow[d, "\otimes_{i=1}^n f"] & {\bbb(p,q,r)} \arrow[d, "f"] \\
                                 &                                                                                                                       & {\bigotimes_{i=1}^n \bbb'(p_i,q_i,r_i) } \arrow[r, "\mu_S"]                               & {\bbb'(p,q,r)}              
\end{tikzcd} $$
for $f_1,\ldots,f_n \in \Hom(\ccc, \bbb)$ and a stacking $S$.  

Analogously, pre-composition with a morphism of conilpotent dg box cooperads $\ccc \longrightarrow \ccc'$ preserves twisting morphisms $\ccc' \longrightarrow \bbb$. 
\end{proof}

\subsection{Rosetta stone} \label{parparrosetta}

\begin{theorem}\label{thmrosetta}
For an augemented dg box operad $\bbb$ and a conilpotent dg box cooperad $\ccc$, we have natural isomorphisms 
$$\Hom_{\dgBoxOperad}(\Om \ccc, \bbb) \cong \Tw(\ccc,\bbb) \cong \Hom_{\dgBoxCooperad^{\conil}}(\ccc ,\B \bbb)$$
exhibiting $\Om$ as the left adjoint of $\B$.
\end{theorem} 
\begin{proof}
By duality, it suffices to verify one of the natural isomorphisms. Let $f:\Om \ccc \longrightarrow \bbb$ be a morphism of dg box operads, then it is uniquely characterized by its image on the generators $\overline{f}: \sthin\inv \overline{C} \longrightarrow \bbb$ such that $f$ commutes with the differential $d_{\sthin \inv \ccc} + d_{\square}$. This translates to the following commuting diagram
$$\begin{tikzcd}
\sthin\inv\bccc \arrow[d, "d_{\sthin\inv \bccc}+d_{\square}"'] \arrow[rr, "\overline{f}"]                                               &  & \bbb \arrow[d, "d_{\bbb}"] \\
\sthin\inv\bccc \oplus \sthin\inv\bccc \infsquare \sthin\inv\bccc \arrow[rr, "\overline{f} + \mu(\overline{f} \infsquare \overline{f})"'] &  & \bbb                    
\end{tikzcd}$$
As a result, $\overline{f}$ commutes with $d_{\Om \ccc}$ if and only if $d_{\bbb}\overline{f} = \overline{f}d_{s^{-1}_{\thin}\bccc} + \mu (\overline{f} \square_{(1)} \overline{f})d_{\square}$. As we have that $\mu (\overline{f} \square_{(1)} \overline{f})d_{\square}= -\sum_{n\geq 2} \sss P_n(\overline{f},\ldots,\overline{f})$ (see \eqref{eqPndsquare}), it shows that that this is equivalent to $\overline{f}$ satisfying the Maurer-Cartan equation.
\end{proof}


\section{The twisted complex} \label{parcomplex}

Given a twisting morphism $\al: \ccc \longrightarrow \bbb$ between a conilpotent dg box cooperad $\ccc$ and a dg box operad $\bbb$, we establish in this section a complex $\ccc \smsquaresub{\al} \bbb$ together with a subcomplex $\ccc \smsquareconnsub{\al} \bbb$ called the \emph{twisted complex} and \emph{connected twisted complex} respectively. 

As a first step, in $\S \ref{parparcomposite}$, we show that the box composite $M \smsquare M'$ of two $\N^3$-dg modules $M$ and $M'$, spanned by the $2$-leveled stackings  
$$ \scalebox{0.8}{$\tikzfig{2Level_stack_intro}$} \quad ,$$
defines a left normal skew monoidal category $(\dgMod(k)^{\N^{3}}, \smsquare, k)$ (Proposition \ref{propskewmonoidal}), that is, a monoidal category for which the associator and right unitor 
$$ (M \smsquare M') \smsquare M'' \overset{a}{\hookrightarrow} M \smsquare( M' \smsquare M'') \text{ and } M \overset{r_u}{\hookrightarrow} M \smsquare k $$
are not invertible. Importantly, a box operad is equivalent to a monoid herein (Proposition \ref{operadasmonoid}). Interestingly, skew monoidal categories appeared in recent years in the context of  operadic categories \cite{lack2018}.

Next, in $\S \ref{parpardifferential}$, we show that a map $\al: s_{\thin}\inv\ccc \longrightarrow \bbb$ induces a derivation $d_{\al}$ on the free right $\bbb$-module $s_{\thin}^{-1}\ccc \smsquare \bbb$. Right modules and derivations are treated in $\S \ref{parparmodule}$.
Analogous to the twisted complex for operads \cite[\S 6.4.11]{lodayvallette}, the map $s_{\thin}^{-1}\ccc \longrightarrow s_{\thin}^{-1}\ccc \smsquare \bbb$ inducing $d_{\al}$ factors through the thin infinitesimal composite $s_{\thin}^{-1}\ccc \infsquare s_{\thin}^{-1}\ccc$ and applies $\al$ to the upper level. Proposition \ref{difftwisting} then states that $\al$ is a twisting morphism if and only if $d_\al$ constitutes a differential, i.e. it squares to zero. In contrast with the situation for operads, the left twisting complex seems to not be available, which we attribute to the lack of free left $\bbb$-modules due to the skewness.

On the other hand, in $\S \ref{parparconnected}$, we observe that the differential $d_{\al}$ restricts well to the subspace $s_{\thin}^{-1}\ccc \smsquareconn \bbb$ of the twisted complex consisting of those $2$-level stackings for which at least one level is made up of solely thin boxes.

We finish this section by showing that the complexes 
$$ \B \bbb \smsquareconnsub{\pi} \bbb \text{ and }  \ccc \smsquaresub{\iota}\Om \ccc$$
are acyclic (Lemma \ref{acyclic}), for $ \pi: \B \bbb \longrightarrow \bbb$ and $\iota: \ccc \longrightarrow \Om \ccc$ the twisting morphisms associated to the unit and counit of the bar-cobar adjunction (Theorem \ref{thmrosetta}). The proof of Lemma \ref{acyclic} is a generalisation of \cite[Lem. 6.5.14]{lodayvallette} by extending the corresponding homotopies. In contrast, we are not able to extend the proof to show the acyclicity of their counterparts  $\B \bbb \smsquaresub{\pi} \bbb$ and $\ccc \smsquareconnsub{\iota}\Om \ccc$ (Remark \ref{rmkacyclic}). 

\subsection{The box composite} \label{parparcomposite}

\begin{mydef}
For $\N^3$-dg modules $M$ and $M'$, we define their \emph{box composite} $M \smsquare M'$ as 
$$(M \smsquare M')(p,q,r) := \bigoplus_{ \substack{q = q_1 + \ldots + q_n \\ p_1,\ldots,p_{n-1} \geq 0 \\ p = p_0 + p' \\ r = p_n + r' }}  M(p',n,r') \otimes \bigotimes_{i=1}^n M'(p_{i-1}, q_i ,p_i).$$
\end{mydef}

The box composite models in a natural way the stacking of labeled boxes in two levels
\begin{equation}\label{boxcomposite}
\scalebox{0.8}{$\tikzfig{box_composite}$}
\end{equation}

This naturally extends to morphisms, making the box composite a bifunctor. 

A \emph{(left) skew monoidal category} $(\vvv,\boxtimes,k,a,r_u,l_u)$ consists of the same data as a monoidal category without demanding invertibility of the comparison natural transformations
$$ (v \boxtimes v') \boxtimes v'' \overset{a}{\longrightarrow} v \boxtimes ( v' \boxtimes v''), \quad v \overset{l_u}{\longrightarrow} k \boxtimes v, \quad v \boxtimes k \overset{r_u}{\longrightarrow} v  $$
which are called associator, left and right unitor respectively. They still satisfy the same pentagon and triangle equations of a monoidal category \cite[Def. 2.1]{szlachanyi2012}. It is called \emph{left normal} if the left unitor is invertible \cite{lack2018}.

\begin{prop} \label{propskewmonoidal}
$(\dgMod(k)^{\N^3}, \smsquare,k)$ is a left normal skew monoidal category with unit $k$, defined as $k(0,1,0) := k$ and $0$ otherwise. In particular, for $M,M'$ and $M''$ in $\dgMod(k)^{ \N^3}$ we have
\begin{enumerate}
\item an associator $a:(M \smsquare M') \smsquare M'' \hookrightarrow M \smsquare( M' \smsquare M'')$,
\item a right unitor $r_u:M \hookrightarrow M \smsquare k$ and a left unitor $l_u: k \smsquare M\cong M$. 
\end{enumerate}
\end{prop}
\begin{proof}
We provide drawings for the associators and unitors. 

The associator is the inclusion of components, that is, an arbitrary component of $((M \smsquare M') \smsquare M'')(p,q,r)$ is given by 
\begin{equation}\label{boxcomposite_associator}
\scalebox{0.8}{$\tikzfig{boxcomposite_associator}$}
\end{equation}
We observe that it is also a component of $M\smsquare (M' \smsquare M'')$.
%
The left unitor is simply the isomorphism $k \smsquare M \cong M$ as $k$ is concentrated in arity $(0,1,0)$. Similarly, for $q>0$, the right unitor is an isomorphism $(M\smsquare k) (p,q,r) \cong M(p,q,r)$. On the other hand, for $q=0$, the box composite $M \smsquare k$ is given by
\begin{equation}
(M \smsquare k)(p,0,r) = \bigotimes_{ 0 \leq t \leq \min \{p,r\} } \scalebox{0.8}{$\tikzfig{rightunitor}$}
\end{equation}
Hence, the right unitor similarly is an inclusion of the component $t=0$.

The five coherence axioms \cite[Def. 2.1]{szlachanyi2012} can be immediately verified using similar drawings.
\end{proof}
\begin{opm}
In general, $(M \smsquare M') \smsquare M'' \not\cong M \smsquare ( M' \smsquare M'')$. For example the component 
$$\scalebox{0.85}{$\tikzfig{skewmonoidal_counterexample}$} $$
is part of the latter, but not of the first. 
\end{opm}

\begin{prop}\label{operadasmonoid}
The category of dg box operads is isomorphic to the category of monoids in $(\dgMod(k)^{\N^3}, \smsquare,k)$.
\end{prop}
\begin{proof}
Immediate by unwinding the definitions.
\end{proof}

\subsection{Right $\bbb$-modules} \label{parparmodule}
Let $\bbb$ be a dg box operad. By Proposition \ref{operadasmonoid}, $\bbb$ is a monoid and thus it comes with a notion of left and right $\bbb$-modules. In concrete terms, a \emph{right $\bbb$-module} is a $\N^3$-dg module $M$ equipped with a right action map
\begin{equation}
 \la^M: M \smsquare \bbb \longrightarrow M 
 \end{equation}
satisfying associativity and unit diagrams. In particular, the free right $\bbb$-module functor $\Tbbb$ on a $\N^3$-dg module $M$ is given by $\Tbbb(M):= M \smsquare \bbb$ with right action
\begin{equation}
\begin{tikzcd}
(M \smsquare \bbb) \smsquare \bbb \overset{a}{\hookrightarrow} M \smsquare (\bbb \smsquare \bbb) \arrow[r, "M \smsquare \mu"] & M \smsquare \bbb
\end{tikzcd}
\end{equation}

\subsubsection{Derivations}

Let $M$ be a right $\bbb$-module. A \emph{derivation} $d:M \longrightarrow M$ on a right $\bbb$-module $M$ is a chain map such that the diagram
\begin{equation}
\begin{tikzcd}
M \smsquare \bbb \arrow[r, "\la^M"] \arrow[d, "d \smsquare \bbb + M \smsquarelinear d_{\bbb}"'] & M \arrow[d, "d"] \\
M \smsquare \bbb \arrow[r, "\la^M"]                                                        & M               
\end{tikzcd}
\end{equation}
commutes, where 
\begin{equation}
(M \smsquarelinear d_{\bbb})(x;y_1 \otimes \ldots \otimes y_n) := \sum_{i=1}^n (-1)^{|x| +\sum_{j<i} |y_j|} (x; y_1 \otimes \ldots \otimes d_{\bbb} y_i \otimes \ldots \otimes y_n)
\end{equation}
for $x\in M, y_1,\ldots,y_n \in \bbb$ forming a $2$-level stacking.

The following proposition is standard.
\begin{prop}\label{modulederivation}
Let $\psi:M \longrightarrow M \smsquare \bbb$ be a morphism in $\N^3$-dg modules, then it uniquely extends to a derivation as follows
$$d_\psi:= (M \smsquare \mu)(\psi \smsquare \bbb) + M \smsquarelinear d_{\bbb}$$
where the first term is the following composite
$$M \smsquare \bbb \overset{\psi \smsquare \bbb}{\longrightarrow} (M \smsquare \bbb) \smsquare \bbb \overset{a}{\hookrightarrow} M \smsquare (\bbb \smsquare \bbb) \overset{M \smsquare \mu}{\longrightarrow} M \smsquare \bbb$$
\end{prop}
\begin{opm}
Note that the notion of derivation also makes sense for left $\bbb$-modules. However, due to the direction of the associator, $\bbb \smsquare M$ is not any longer the free left $\bbb$-module on $M$. Hence, the unique extension theorem for $\bbb \smsquare M$-derivations is lacking.
\end{opm}


\subsection{The twisted differential} \label{parpardifferential}

Let $\ccc$ be a conilpotent dg box cooperad and $\bbb$ a dg box operad. Let $\al: s_{\thin}\inv\ccc \longrightarrow \bbb$ be a morphism of $\N^3$-dg modules. Consider the following two maps of degree $-1$
\begin{equation}
\begin{tikzcd}
\sthin\inv \ccc  \arrow[r, hook] & \sthin \inv \ccc \smsquare k \arrow[r, "d_{\sthin \inv \ccc} \smsquare \eta"] & \sthin \inv \ccc \smsquare \bbb
\end{tikzcd}
\end{equation}
and 
\begin{equation}
\begin{tikzcd}
\sthin \inv \ccc  \arrow[r, "\De^{\sthin \inv}_{(1)}"] & \sthin \inv \ccc \infsquare \sthin \inv \ccc \arrow[rr, "\sthin \inv \ccc \infsquare \al"] &  & \sthin \inv \ccc \infsquare \bbb \hookrightarrow \sthin \inv \ccc \smsquare \Tboxop \bbb  \arrow[rr, "\sthin \inv \ccc \smsquare \mu"] &  & \sthin \inv \ccc \smsquare \bbb
\end{tikzcd}
\end{equation}
Geometrically, the second map acts on a non-thin box $c$ as
\begin{equation}\label{diff_dral}
\scalebox{0.7}{$\tikzfig{Twist_diff_1}$} \quad \longmapsto \quad -\; \scalebox{0.7}{$\tikzfig{Twist_diff_2_1}$} \; + (-1)^{|c_{(1)}|} \; \scalebox{0.7}{$\tikzfig{Twist_diff_2_2}$} \quad \longmapsto \quad - \; \scalebox{0.7}{$\tikzfig{Twist_diff_3_1}$} \quad + (-1)^{|c_{(1)}|} \; \scalebox{0.7}{$\tikzfig{Twist_diff_3_2}$} 
\end{equation}
where we denote the green dotted lines as applying the composition $\mu$. 

Let $d_\alpha: \sthin\inv \ccc \smsquare \bbb \longrightarrow \sthin \inv \ccc \smsquare \bbb$ be the unique derivation extending their sum (Proposition \ref{modulederivation}), then it decomposes as 
\begin{equation}
d_\al = d_{\sthin \inv \ccc} \smsquare \bbb + \sthin \inv \ccc \smsquarelinear d_\bbb + d^r_\al
\end{equation}
Note that the first two terms form the differential $d_{\sthin\inv\ccc \smsquare \bbb}$.

\begin{prop}\label{difftwisting}
If $\al \in \Tw(\ccc,\bbb)$, then $d_\al$ is a differential.
\end{prop}
\begin{proof}
It suffices to show that
\begin{equation*}
( d_{\sthin \inv \ccc \smsquare \bbb} d^r_\al + d^r_\al d_{\sthin \inv \ccc \smsquare \bbb} + (d^r_\al)^2 )(c) =0
\end{equation*}
for any $c \in \ccc$. For thin boxes, this is the classical result \cite[Lemma 6.4.12]{lodayvallette}. Assume $c$ is non-thin and write $\De_{(1)}(c) = c_{(1)} \otimes \ldots \otimes c_{(n)}$. In this proof, green dotted lines denote applying the composition $\mu$ of $\bbb$. We will show the following two equations
\begin{equation}\label{difftwisted_eq1}
(d_{\sthin \inv \ccc \smsquare \bbb} d^r_\al + d^r_\al d_{\sthin \inv \ccc \smsquare \bbb})( c) = \sum_{i=2} (-1)^{\sum_{j<i} | c_{(j)}|} \quad   \scalebox{0.7}{$\tikzfig{dAldCPTot_2}$} \quad + \quad
 \scalebox{0.7}{$\tikzfig{dAldCPTot_1}$}
\end{equation}
and 
\begin{equation}\label{difftwisted_eq2}
( d_\al^r)^2  ( c) = \quad \sum_{i\geq 2} (-1)^{\sum_{j<i} |c_{(j)}|} \; \scalebox{0.7}{$\tikzfig{dAlFinal_2}$} \quad  + \scalebox{0.7}{$\tikzfig{dAlFinal_1}$}.
\end{equation}
As $\al$ is a twisting morphism, this completes the proof.

We now prove equation \eqref{difftwisted_eq1}. First, we compute
\begin{align*}
&d_{\sthin\inv \ccc \smsquare \bbb}\; d_\al^r \left( c \right) = d_{\sthin\inv \ccc \smsquare \bbb} \left( \quad  -  \; \scalebox{0.7}{$\tikzfig{Twist_diff_3_1}$}  \quad +  (-1)^{|c_{(1)}|} \; \scalebox{0.7}{$\tikzfig{Twist_diff_3_2}$} \quad \right)   \\
&=\quad - \; \scalebox{0.7}{$\tikzfig{dCPdAl_1}$}  \quad +  \sum_{i=2}^n 	(-1)^{ \sum_{j<i}|c_{(j)}|}\;  \scalebox{0.7}{$\tikzfig{dCPdAl_2}$}  \quad +  (-1)^{|c_{(1)}|} \; \scalebox{0.7}{$\tikzfig{dCPdAl_3}$} \quad + \quad  \scalebox{0.7}{$\tikzfig{dCPdAl_4}$}
\end{align*}
Next, we compute
\begin{align*}
&d_\al^r d_{ \sthin\inv \ccc \smsquare \bbb }(c) \\
&=  (\sthin \inv \ccc \smsquare \mu ) ( \sthin\inv \ccc \infsquare \al ) \sthin\inv \De_{(1)}^{\sthin \inv} (d_\ccc c) \\
&= - (\sthin\inv \ccc \smsquare \mu ) ( \sthin \inv \ccc \infsquare \al ) \sthin\inv d_{\sthin\inv\ccc \infsquare \sthin\inv\ccc}\De_{(1)}^{\sthin \inv}( c) \\
&=   \; \scalebox{0.7}{$\tikzfig{dAldCP_11}$} \quad -    \sum_{i=2}^n (-1)^{\sum_{j<i} |c_{(j)}|} \; \scalebox{0.7}{$\tikzfig{dAldCP_12}$} \quad -  (-1)^{| c_{(1)}|} \; \scalebox{0.7}{$\tikzfig{dAldCP_13}$} \quad  -  \quad  \scalebox{0.7}{$\tikzfig{dAldCP_14}$} 
\end{align*}
where the second equality holds due to $d_\ccc$ being a coderivation.

Finally, we prove equation \eqref{difftwisted_eq2}. We obtain this result in two steps. First, we show that
\begin{align*}
 (d^r_\al)^2(c) &= -(-1)^{\sum_{j <i} |c_{(j)}|} \; \scalebox{0.7}{$\tikzfig{dAldAl_13}$} \quad + \quad  (-1)^{|c_{(i1)}| + \sum_{j<i} |c_{(j)}|} \;   \scalebox{0.7}{$\tikzfig{dAldAl_12}$}  \quad \\
 &+ \quad (-1)^{|s\inv c_{(21)}|} \; \scalebox{0.7}{$\tikzfig{dAldAl_11}$}. 
\end{align*}
Indeed, applying $d_\al^r$ to $c$ provides us with two sets of terms as in \eqref{diff_dral}. Applying $d_\al^r$ to this first set gives
\begin{align*}
d_\al^r \left( \quad  - \; \scalebox{0.7}{$\tikzfig{Twist_diff_3_1}$} \quad \right) &= \quad-  (-1)^{|s\inv c_{(11)}| + |s\inv c_{(12)}|  \sum_{1< j<i}| \al c_{(j)}|} \; \scalebox{0.7}{$\tikzfig{dAldAl_5}$} \\
&=-(-1)^{\sum_{j <i} |c_{(j)}|} \; \scalebox{0.7}{$\tikzfig{dAldAl_13}$}
\end{align*}
where $c_{(i)}$ denotes the left-most box stacked on top of $c_{(12)}$, and the second equation holds do to (co)associativity.

Applying $d_\al^r$ to the second set of terms, we obtain
\begin{align*}
  &d_\al^r \left( \quad  (-1)^{|c_{(1)}|} \; \scalebox{0.7}{$\tikzfig{Twist_diff_3_2}$} \quad \right) \\
&=  \quad -(-1)^{ |c_{(1)}| + |s\inv c_{(2)}| \sum_{j>i} | c_{(1j)}| } \; \scalebox{0.7}{$\tikzfig{dAldAl_4}$} \\ 
&\quad + \quad  (-1)^{|c_{(1)}| + |c_{(11)}|} \; \scalebox{0.7}{$\tikzfig{dAldAl_1}$} \quad + \quad    (-1)^{|c_{(1)}| + |c_{(11)}|} \; \scalebox{0.7}{$\tikzfig{dAldAl_3}$}\\
& \quad + \quad  (-1)^{ | c_{(1)}| + | c_{(11)}| + |s\inv c_{(12)}||s\inv c_{(2)}|}\;  \scalebox{0.7}{$\tikzfig{dAldAl_2}$} \\
&= \quad (-1)^{|c_{(i1)}| + \sum_{j<i} |c_{(j)}|} \;   \scalebox{0.7}{$\tikzfig{dAldAl_12}$}  \quad + \quad (-1)^{|c_{(21)}|} \; \scalebox{0.7}{$\tikzfig{dAldAl_11}$}
\end{align*}
where the second and fourth term in the second equality cancel out due to (co)associativity.

To finish the proof, it thus suffices to show the equality
\begin{align}
\label{diff_summation_pn}\sum_{i\geq 2} (-1)^{\sum_{j<i} |c_{(j)}|} \; \scalebox{0.7}{$\tikzfig{dAlFinal_2}$} \quad &=  \sum_{i\geq 2} \quad  (-1)^{|c_{(i1)}|+ \sum_{j<i} |c_{(j)}| }  \; \scalebox{0.7}{$\tikzfig{dAldAl_12}$} \\
& - (-1)^{ \sum_{j<i} |c_{(j)}|}\quad \sum_{i\geq 2} \quad  \scalebox{0.7}{$\tikzfig{dAldAl_13}$} \notag
\end{align} 
Indeed, we compute
\begin{equation}\label{eqPal}
\scalebox{0.7}{$\tikzfig{dAlFinal_2}$} \quad = \quad - \;   \scalebox{0.7}{$\tikzfig{dAlSumP_2}$} \quad +  (-1)^{|c_{(i1)}|} \; \scalebox{0.7}{$\tikzfig{dAlSumP_1}$} 
 \end{equation}
Again, using the (co)associativity, we can alternate between the unique thin infinitesimal stacking below and above a thin box (see Lemma \ref{bardiff}), and obtain
\begin{align*}
\scalebox{0.7}{$\tikzfig{dAlExtra_1}$} \quad = \quad (-1)^{|c_{(i1)}| + \sum_{i \leq j < k} |c_{(j)}|} \; \scalebox{0.7}{$\tikzfig{dAlExtra_2}$}
\end{align*}
As a result, the summation on the left-hand side of \eqref{diff_summation_pn} reduces to the summation on the right. 
\end{proof}

\begin{mydef}
For a twisting morphism $\al \in \Tw(\ccc,\bbb)$, we define its \emph{twisted complex} $\ccc \smsquaresub{\al}\bbb$ as 
$$(s\inv\ccc \smsquaresub{\al} \bbb, d_\al)$$
\end{mydef}

This construction is functorial in both $\ccc$ and $\bbb$ for compatible twisting morphisms, that is, for a box operad morphism $f: \ccc \longrightarrow \ccc'$ and a box cooperad morphism $g: \bbb \longrightarrow \bbb'$ the square
$$
\begin{tikzcd}
\sthin\inv\ccc \arrow[d, "\al"'] \arrow[r, "\sthin \inv f"] & \sthin \inv \ccc' \arrow[d, "\al'"] \\
\bbb \arrow[r, "g"']                  & \bbb'                  
\end{tikzcd}$$
commutes. Moreover, it is easy to see that in this case $\sthin\inv f \smsquare g: \ccc \smsquaresub{\al} \bbb \longrightarrow \ccc' \smsquaresub{\al'} \bbb'$ is a chain map.

\subsection{The connected twisted subcomplex} \label{parparconnected}

\subsubsection{The connected box composite}

For $\N^3$-dg modules $M$ and $M'$, we define their \emph{connected box composite} $M \smsquareconn M'$ as the dg submodule of $M \smsquare M'$ consisting of the components 
$$M(p',n,r') \otimes \bigotimes_{i=1}^n M'(p_{i-1},q_i,p_i)$$
such that either $p'=r' =0$ (the bottom box is thin), $p_0= \ldots = p_n =0$ (the top row of boxes are thin) or $n=0$ (there is no upper level). Geometrically, the following types of stacking are part of the connected box composite $M \smsquareconn M'$
$$\scalebox{0.8}{$\tikzfig{Connected}$}$$
Note that we consider the case $n=0$ as part of the second type of stacking.
\begin{prop}
Let $\al: \ccc \longrightarrow \bbb$ be a twisting morphism, then 
$$(\sthin \inv \ccc \smsquareconn \bbb,d_\al) \sub \ccc \smsquareconnsub{\al} \bbb $$
is a dg submodule, which we call \emph{the connected twisted complex} and which we denote by $\ccc \smsquareconnsub{\al} \bbb$.
\end{prop}
\begin{proof}
Consider an element $(x;y_1 \otimes \ldots \otimes y_n)$ of the connected composite $\sthin \inv \ccc \smsquareconn \bbb$. If the bottom box $x$ is thin, then it suffices to observe that the same holds for the terms in $d_\al$. If the top row of boxes $y_1,\ldots,y_n$ are thin, then we have two possibilities for the terms in $d_\al(x;y_1\otimes \ldots \otimes y_n)$. Either, $x$ is decomposed into a type $\rom{2}$ stacking, and thus the top row of boxes remains thin, or $x$ is decomposed into a type $\rom{3}$ stacking, and thus its bottom box is thin.
\end{proof}
\begin{opm}
In general, the connected twisted complex is not quasi-isomorphic to the twisted complex as we show in our application of Koszul duality to the box operad $\Lax$ (Theorem \ref{laxnotkoszul} and Remark \ref{notquasiiso}).	
\end{opm}

\subsection{The twisted bar and cobar complex} \label{parpartwistedbar}

The unit and counit of the Bar-Cobar adjunction induce morphisms 
$$ \pi: \Tboxcoop(\sthin\overbbb) \twoheadrightarrow \sthin \overbbb \hookrightarrow \sthin \bbb$$
and 
$$\iota: \sthin\inv \ccc \twoheadrightarrow \sthin \inv \overccc \hookrightarrow \Tboxop(\sthin\inv\overccc)$$
which are twisting morphisms due to Proposition \ref{thmrosetta}. We call them the \emph{universal twisting morphsisms} and the associated twisted complexes $\B \bbb \smsquaresub{\pi} \bbb$ and $\ccc \smsquaresub{\iota} \Om \ccc$ the \emph{twisted bar}.
 and \emph{cobar complex} respectively. 
 
\begin{lemma}\label{acyclic}
The complexes
$$ \B \bbb \smsquareconnsub{\pi} \bbb \text{ and }  \ccc \smsquaresub{\iota}\Om \ccc$$
are acyclic.
\end{lemma}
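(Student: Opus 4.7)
The plan is to generalise the contracting-homotopy argument of \cite[Lem. 6.5.14]{lodayvallette} to the $2$-level stacking picture of box operads. For the connected bar complex, the homotopy absorbs the upper $\bbb$-level of a stacking into the bar construction $\B\bbb$ as a new top co-level; for the cobar complex, it dually extracts the root level of the $\Om\ccc$-factor and regrafts it onto the $\ccc$-factor below.

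Concretely, for $\B\bbb \smsquareconnsub{\pi} \bbb$, an element is a pair $(T; y_1 \otimes \ldots \otimes y_n)$ with $T \in \Tboxcoop(\sthin\overbbb)$ and $y_i \in \bbb$ satisfying the connectedness condition of $\S$\ref{parparconnected}. I would define $h(T; y_1 \otimes \ldots \otimes y_n)$ by first projecting each $y_i$ onto $\overbbb$, suspending via $\sthin$, and grafting the resulting row onto the root of $T$ as a new top co-level; the output is paired with the empty upper level (i.e.\ $n=0$, which is admitted by the connectedness condition), with signs dictated by the degree shift $\sthin$. Dually, for $\ccc \smsquaresub{\iota} \Om\ccc$, the homotopy uses the canonical projection $\Om\ccc \twoheadrightarrow \sthin\inv\overccc$ to extract the root level of the $\Om\ccc$-factor and inserts it underneath the $\ccc$-factor via $\De_{(1)}^{\sthin\inv}$.

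The next step is to verify a contracting identity $d h + h d = \mathrm{id} - \epsilon$ for a projection $\epsilon$ onto a small distinguished subcomplex capturing the base case. Writing $d = d_{\sthin\inv\ccc \smsquare \bbb} + d^r$ as in Proposition~\ref{difftwisting}, the decisive observation is that the component of $d^r$ which decomposes the newly-grafted co-level of $T$ (respectively, the freshly inserted bottom level) and applies $\pi$ (respectively, $\iota$) exactly inverts $h$, since $\pi$ restricted to $\sthin\overbbb$ is essentially the desuspension identity and $\iota$ has the analogous splitting property. Filtering by the number of levels of the bar (respectively, cobar) construction, the remaining contributions vanish on the associated graded, and a convergent spectral sequence then yields acyclicity of the total complex.

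The main obstacle I anticipate is combinatorial rather than conceptual: a careful case analysis of signs and of the connectedness condition is needed when the row $(y_1, \ldots, y_n)$ mixes thin and non-thin boxes, or when grafting creates arity mismatches between the inserted level and the root of $T$. These are precisely the points where the skewness of $\smsquare$ (Proposition~\ref{propskewmonoidal}) enters and force the restriction to the connected subcomplex on the bar side, while the opposite asymmetry appears on the cobar side, consistently with the failure flagged in Remark~\ref{rmkacyclic}.
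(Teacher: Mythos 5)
Your plan tracks the paper's strategy closely: both cases extend the contracting homotopy of \cite[Lem.\ 6.5.14]{lodayvallette}, and your geometric picture of the homotopies — absorbing the top $\bbb$-level into $\B\bbb$ on the bar side, extracting the root box of the single $\Om\ccc$-factor on the cobar side — is exactly the paper's. A few operational details deserve correction, though.

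On the bar side, the paper filters $\B\bbb\smsquareconnsub{\pi}\bbb$ by the number of $\overbbb$-generators appearing in the $\B\bbb$-factor, not by the number of levels of the bar construction. The distinction matters because $d^r_\pi$ acts by $\De^{\sthin\inv}_{(1)}$ followed by $\pi$, which peels off a single box from $T$; the level count of $T$ need not drop, whereas the generator count drops by exactly one unless the $\bbb$-elements being composed with are units. It is this generator count that reduces $d^r_\pi$ on the associated graded to the unit-composition part for which the homotopy is manageable; a level-count filtration would not isolate the same piece. Also, "grafting onto the root of $T$" should be grafting onto the leaves (top) of $T$, consistent with your own phrase "new top co-level."

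On the cobar side, the paper does not introduce a filtration or spectral sequence: the extended homotopy $h$ directly satisfies $d_\iota h + h d_\iota = \Id$, as in the operadic case. Moreover, the homotopy does not invoke $\De^{\sthin\inv}_{(1)}$ on the $\ccc$-factor; $h$ is nonzero only when the $\ccc$-factor has thin arity $(0,1,0)$, where it applies the counit and then \emph{decomposes $X_1$ as a stacking in the free box operad} $\Om\ccc = \Tboxop(\sthin\inv\overccc)$, promoting the root generator to the new $\ccc$-factor and keeping the remaining upper stacking as the $\Om\ccc$-level. Your reference to the canonical projection $\Om\ccc \twoheadrightarrow \sthin\inv\overccc$ is in the right spirit, but that projection kills everything of word-length $>1$ rather than extracting the root, so the mechanism needs to be the stacking decomposition. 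With these corrections your argument lines up with the paper's proof.
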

\begin{proof}
Both cases require similar, but different treatments.

\proofcase{1}{ $\B \bbb \smsquareconnsub{\pi} \bbb$ is acyclic}

It suffices to adapt the proof from [Lem. 6.5.14, Loday-Vallette] by extending the homotopy. We include the full proof for completeness.

Consider the subspaces $\B \bbb \; \smsquareconnupdown{n}{\pi}  \bbb$ made up of stackings with exactly $n$ non-trivial vertices 
, i.e. those elements in $\overbbb$. We define the following increasing filtration $F_{i}(\barbpi) := \bigoplus_{n \leq i} \B \bbb \smsquareconnupdown{n}{\pi} \bbb$. Note that $d_{\pi}F_{i} \subseteq F_{i}$ as $d_{\pi}^r$ of $\B \bbb$ does not increase the number of boxes, and it strictly decreases if the elements in $\bbb$ are not units. Hence, we have a spectral sequence $E_{pq}^{\bullet}$ with zeroth page
 $$E^{0}_{st} = F_{w}((\B \bbb \smsquareconnsub{\pi} \bbb)_{s+t}) / F_{s-1}((\B \bbb \smsquareconnsub{\pi} \bbb)_{s+t}) = (\smsquareconnupdown{s}{\pi}  )_{s+t}$$
 with differential $d^{0} = d_{\overbbb} \smsquare \Id_{\bbb} + \Id_{\B\bbb} \smsquarelinear d_{\bbb} + d$ where 
 $d$ is the part of $d_{\pi}^{r}$ which does not change the number of elements of $\overbbb$ appearing. Hence, $d$ takes an element in $\B \bbb$ and composes it with units in $\bbb$.
 
For $s=0$, we have $(E^{0}_{0t},d^{0})=(k,0)$ and thus $E^{1}_{00}=k$ and $E^{1}_{0t} = 0$ for $t\neq 0$. For $s>0$, we introduce a contracting homotopy $h$ on $E^{0}_{\bullet q}$. On type $I$ and $II$ stackings, we define $h$ as in \cite[Lemma 6.5.14]{lodayvallette}.
On type $III$ stackings, we define
$$h\left(\quad \scalebox{0.7}{$\tikzfig{homotopy_11}$} \quad \right) = (-1)^\mu \quad \scalebox{0.7}{$\tikzfig{homotopy_12}$}$$
where the green dotted lines denotes applying the composition and $(-1)^\mu$ denotes the sign obtained from rearranging the elements $\sthin x_1,\ldots,\sthin x_m$ and $\sthin y_1,\ldots,\sthin y_n$ in order to compose. This map has degree $1$ as solely the degree of the bottom box is altered as it turns from thin to non-thin by composing with the elements $\sthin y_1, \ldots, \sthin y_n$. We will show that it is a homotopy between $\Id$ and $0$, i.e. $hd^0 + d^0 h= \Id$. 

For type $I$ and $II$ stackings, this is the classical result. For type $III$ stackings, we compute
\begin{align}
d^0 h\left(\quad  \scalebox{0.7}{$\tikzfig{homotopy_11}$} \quad \right) &= \left(-1\right)^\mu d^0 \left( \quad  \scalebox{0.7}{$\tikzfig{homotopy_12}$} \quad \right) \nonumber \\
&= \quad \left(-1\right)^\mu \left(d_{\sthin \overbbb} \smsquare \bbb\right)\left(\quad  \scalebox{0.7}{$\tikzfig{homotopy_12}$} \quad \right)\;  + \quad  \scalebox{0.7}{$\tikzfig{homotopy_11}$}  \label{problemhomotopybar}
\end{align}
and 
\begin{align*}
h d^0 \left(\quad  \scalebox{0.7}{$\tikzfig{homotopy_11}$} \quad \right) &= h \left(d_{\sthin \overbbb} \smsquare \bbb  + \B \bbb \smsquarelinear d_{\bbb}\right)\left(\quad  \scalebox{0.7}{$\tikzfig{homotopy_11}$} \quad \right) \\
&= -  \left(d_{\sthin \overbbb} \smsquare  \bbb\right) h \left(\quad  \scalebox{0.7}{$\tikzfig{homotopy_11}$} \quad \right) \\
&= - \left(-1\right)^\mu \left(d_{\sthin \overbbb} \smsquare  \bbb\right)\left(\quad  \scalebox{0.7}{$\tikzfig{homotopy_12}$} \quad \right)
\end{align*}
Therefore, the spectral sequence collapses, that is, $E_{pq}^1 = 0$ for $p>0$ and any $q$, whence showing that the homology of $\B \bbb \smsquareconnsub{\pi} \bbb$ vanishes.

\proofcase{2}{$ \ccc \smsquaresub{\iota} \Om \ccc$ is acyclic}

We extend the homotopy $h$ proving that the classical twisted cobar complex $\ccc \circ_{\iota} \Om \ccc$ is acyclic \cite[Lem. 6.5.14]{lodayvallette}, to $\ccc \smsquaresub{\iota} \Om \ccc$. Indeed, consider an element $\; \scalebox{0.75}{$ \tikzfig{cobarhomotopy_1}$} \; \in \ccc \smsquaresub{\iota} \Om \ccc$, then define the map $h$ of degree $1$ as zero, except for 
\begin{equation}
\label{homotopy1}
h\left( \quad \scalebox{0.75}{$ \tikzfig{cobarhomotopy_def}$} \quad \right) =  -  \; \scalebox{0.75}{$\tikzfig{decompX1}$} \; 
\end{equation}
where $X_1$ decomposes as 
$$X_1 = \; \scalebox{0.75}{$\tikzfig{decompX1}$} \;$$
In order to show $h$ is a homotopy from $\Id$ to $0$, it suffices to compute that $d_{\iota}h + hd_{\iota} = \Id$, which does not differ from the usual calculation for operads.
%
\end{proof}
\begin{opm}\label{rmkacyclic}
The above proofs seem to fail respectively for $\B \bbb \smsquaresub{\pi} \bbb$ and $\ccc \smsquareconnsub{\iota}\Om \ccc$. 

For the complex $\B \bbb \smsquaresub{\pi} \bbb$, the map $h$ easily extends, but it then no longer constitutes a homotopy: the problem lies in equation \eqref{problemhomotopybar}. Namely if the bottom box is thin, the differential $d^0$ will not decompose it into a stacking with a non-thin box at the bottom. 

For the complex $\ccc \smsquareconnsub{\iota}\Om \ccc$, it is unclear how to define the homotopy from \eqref{homotopy1} as it does not restrict well. For example, when the upper level stacking $X_1$ decomposes in two levels of non-thin boxes, their image under $h$ is not part of the connected box composite.
\end{opm}

\section{Fundamental theorem of (inclined) box operads} \label{parfundamental}

Assembling the previous sections, our main goal of this section is to relate the homology of the twisted complex $\ccc \smsquaresub{\al} \bbb$ to the question whether the bar construction $\ccc \longrightarrow \B\bbb$ and cobar construction $\Om \ccc \longrightarrow \bbb$ are fibrant and cofibrant replacements respectively.

Our first result, in $\S \ref{parparpartial}$, is the \emph{partial fundamental theorem for box operads} (Theorem \ref{partialfundamental}): the connected twisted complex $\ccc \smsquareconnsub{\al} \bbb$ is acyclic if and only if the induced morphism $\ccc \longrightarrow \B \bbb$ is a quasi-isomorphism. The technical heart of the proof consists of two results: the connected twisted bar construction $\B \bbb \smsquareconnsub{\pi} \bbb$ is acyclic (Proposition \ref{acyclic}), and the \emph{box operadic comparison lemma} (Proposition \ref{comparisonlemma}). The latter generalises the operadic comparison lemma \cite[Lem. 6.7.1]{lodayvallette} to the connected twisted complex for box operads. 

Unfortunately, we are not able to show the fundamental theorem for the cobar construction: either we lack a proof showing that the connected twisted cobar complex $\ccc \smsquareconnsub{\iota} \Om \ccc$ is acyclic or an extension of the comparison lemma to the full twisted complex. For the latter, an important obstruction is that the natural weight grading defined in $\S \ref{parparparweight}$ is not efficient enough (see also Remark \ref{remweight}).

Our solution in order to obtain the fundamental theorem for the cobar construction is to look at an important subcategory of box (co)operads, that is, the subcategory of \emph{inclined} box operads. This property solely pertains to the underlying $\N^3$-dg module: a $\N^3$-dg module $M$ is \emph{inclined} if it does not contain elements of arity $(p,q,r)$ for which $p<r$. For box (co)operads with underlying inclined $\N^3$-dg module, the grading $p+q$ on arities proves to be sufficiently efficient in order to obtain the inclined box operadic comparison lemma (Proposition \ref{operadiclemmainclined}). As a result, we obtain the fundamental theorem for inclined box operads (Theorem \ref{thminclinedfundamental}): the twisted complex $\ccc \smsquaresub{\al} \bbb$ is acyclic if and only if the induced morphism $\Om \ccc \longrightarrow \bbb$ is a quasi-isomorphism.

Finally, having prepared all the necessary components, we develop in $\S \ref{parparkoszul}$ the theory of Koszul box operads: it applies to \emph{thin-quadratic} box operads $\bbb$ by defining a Koszul dual box cooperad $\bbb^{\antishriek}$, a twisting morphism $\ka: \bbb^{\antishriek} \longrightarrow \bbb$ and its \emph{Koszul complex} $\bbb^{\antishriek} \smsquaresub{\ka} \bbb$. If $\bbb$ is inclined, we obtain our main theorem (Theorem \ref{thmkoszul}): $\bbb$ is Koszul if and only if its Koszul complex is acyclic if and only if $\Om \bbb^{\antishriek} \longrightarrow \bbb$ exhibits $\Om \bbb^{\antishriek}$ as the minimal model of $\bbb$.  

\subsection{The partial fundamental theorem} \label{parparpartial}

\subsubsection{Weight grading}\label{parparparweight}

A \emph{$\N^3$-weight graded dg module}, or $\N^3$-wdg module for short, $M$ is a $\N^3$-dg module having an extra grading $\om \in \Z$ which is preserved by the differential. In other words, $M$ is the direct sum of $\N^3$-dg submodules $M^{(\om)}$ indexed by this weight. Note that we write $M^{(\om)}_l(p,q,r)$ for the $k$-module of weight $\om$, degree $l$ and arity $(p,q,r)$.

\begin{vb}
For a given $\N^3$-wdg module $M$, the (conilpotent) (co)free box (co)operads $\Tboxop(M)$ and $\Tboxcoop(M)$ have a natural induced weight: for $X$ a stacking of $n$ boxes labeled $x_1,\ldots,x_n \in M$ we set
\begin{equation}\label{weightformula1}
\om(X;\onth{x})) := \sum_{i=1}^k \om(x_i) + \begin{cases}
0  & \text{ if } X \text{ consists of thin boxes,} \\
 1 + \#\text{thin boxes} - n & \text{ otherwise.}
\end{cases}
\end{equation} 
\end{vb}
\begin{vb}
Given two $\N^3$-wdg modules $M$ and $N$, the composite $M \smsquare N$ and connected composite $M \smsquareconn N$ have a natural induced weight given by the formula \eqref{weightformula1}.
\end{vb}
A \emph{weight graded dg box (co)operad}, or wdg box (co)operad for short, is a dg box (co)operad with underlying $\N^3$-wdg module such that the (de)composition and (co)unit maps preserve the weight (with respect to the above defined weight grading on the box composite). Moreover, in the weight graded case, we assume the twisting morphism to respect the induced weight grading.

\begin{lemma}
For a twisting morphism $\al: \ccc \longrightarrow \bbb$ the differential $d_{\al}$ of the twisted composite $\ccc \smsquaresub{\al} \bbb$ preserves the weight.
\end{lemma}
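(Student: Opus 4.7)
The plan is to use the explicit decomposition
\[ d_\al = d_{\sthin\inv\ccc}\smsquare\bbb + \sthin\inv\ccc\smsquarelinear d_\bbb + d^r_\al \]
given just above Proposition \ref{difftwisting}, and verify that each summand preserves the weight separately. The first two summands are weight-preserving as an immediate consequence of the wdg hypothesis on $\ccc$ and $\bbb$: they act on a single factor of the stacking by the internal differentials $d_\ccc$ and $d_\bbb$ respectively, which preserve the weight of that factor by definition of a wdg (co)operad, and they do not alter the thin/non-thin combinatorics of the stacking, so the correction term in \eqref{weightformula1} is unchanged.

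The interesting summand is $d^r_\al$. I would unwind it along its definition in $\S \ref{parpardifferential}$: apply $\De^{\sthin\inv}_{(1)}$ to the bottom box, then $\al$ to the resulting upper piece, then use the associator together with the inclusion $\sthin\inv\ccc \infsquare \bbb \hookrightarrow \sthin\inv\ccc \smsquare \Tboxop\bbb$ and finally $\mu$ to fuse with the pre-existing upper level, as depicted in \eqref{diff_dral}; the whole is then upgraded to a derivation by Proposition \ref{modulederivation}, which adds only a further tensor factor on the right and so does not affect the weight bookkeeping. Each constituent preserves the weight individually: $\De^{\sthin\inv}_{(1)}$ by the wdg cooperad hypothesis on $\ccc$, $\al$ by the standing assumption that twisting morphisms respect weights in the wdg setting, $\mu$ by the wdg operad hypothesis on $\bbb$, and the associator together with the canonical inclusion $\infsquare \hookrightarrow \smsquare\Tboxop$ are inclusions of summands and hence trivially preserve the weight grading defined by \eqref{weightformula1}.

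The one point that needs a short verification is that the correction term $1+\#\text{thin}-n$ of \eqref{weightformula1} combines compatibly across these (de)compositions: after $\De^{\sthin\inv}_{(1)}$ introduces an infinitesimal two-level piece (at most one new non-thin box accompanied by thin identities at the distinguished site) and $\mu$ fuses this piece with the old upper level into a single upper level of boxes, the resulting stacking has the same non-thin count and hence the same correction contribution as the input. I expect this combinatorial bookkeeping to be the only obstacle; it follows directly from formula \eqref{weightformula1} together with the wdg (co)operad axioms applied to $\De^{\sthin\inv}_{(1)}$ and $\mu$, which tell us that the weight of a (de)composed stacking agrees with the weight of its original non-thin box.
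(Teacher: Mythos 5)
Your approach matches the paper's almost exactly: both decompose $d_\al$ into the internal differential of $\sthin\inv\ccc\smsquare\bbb$ plus $d^r_\al$, and both observe that each constituent map of $d^r_\al$ — the infinitesimal decomposition, $\al$, and $\mu$ — preserves the weight by the wdg hypotheses. The one caveat is your final bookkeeping claim that ``the resulting stacking has the same non-thin count'' after fusing via $\mu$: this is not the right invariant to track, since $\mu(\al c_{(2)}; y_{j_1},\ldots,y_{j_q})$ replaces $q+1$ boxes by a single one and so the box count in general changes; the correct reason the correction term of \eqref{weightformula1} works out is simply that the weight on $\Tboxop\bbb$ and on composites is \emph{defined} so that $\mu$ and $\De^{\sthin\inv}_{(1)}$ preserve it, which is exactly what the paper invokes — no separate combinatorial verification is needed or available at this level of generality.
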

\begin{proof}
The differential decomposes as $d_{\al} = d_{\sthin\inv \ccc \circ \bbb} + d_{\al}^{r}$. The first term preserves the weight as the differentials of $\ccc$ and $\bbb$ do. The second term preserves the weight as it involves the (de)composition and the twisting morphism, which all preserve the weight.
\end{proof}

A wdg box operad $\bbb$ is \emph{connected} if it decomposes weight-wise as 
$$\bbb = \K \eta \oplus \bbb^{(1)} \oplus \ldots$$
that is, $\bbb$ is nonnegatively weight graded and its zero weight is one dimensional concentrated in degree $0$. Analogously, we define \emph{connected} conilpotent wdg box cooperads.

The following are canonical examples of such connected structures.
\begin{vb}\label{weightfree}
Given a $\N^3$-dg module $E$, the (conilpotent) (co)free box (co)operads $\Tboxop(E)$ and $\Tboxcoop(E)$ are naturally weight-graded by setting $\om(x) = 1$ for $x\in E$ and $\om(\eta) = 0$. As a result, the weight of a stacking equals the number of thin boxes it contains (including the resulting box when composing) plus $1$. Thus, we have 
\begin{align*}
\Tboxop(E)^{(0)} &\cong k \Id \\
\Tboxop(E)^{(1)} &\supseteq E \\
\Tboxop(E)^{(2)} &\supseteq E \infsquare E
\end{align*}
\end{vb}
\begin{opm}\label{remweight}
Note the important difference for box operads with the natural weight grading on the (conilpotent) (co)free (co)operad: there exist stackings of more than one generator having weight $1$. For example, consider any stacking of non-thin boxes.
\end{opm}

\underline{From here on, following \cite{lodayvallette}, we assume the homological degree and weight grading to be nonnegative.}

\subsubsection{The box operadic comparison lemma}

\begin{lemma}\label{lemweight}
Let $M$ and $N$ be $\N^3$-wdg modules such that their weight zero component is concentrated in thin arities. 
For an element of weight $n$
$$(x; y_1 \otimes \ldots \otimes y_k) \in (M \smsquareconn N)^{(n)}$$
we have 
\begin{enumerate}
\item if $\om(x) >0$, then $\om(y_1),\ldots,\om(y_k) < n$, and
\item if $\om(x) = n$, then $\om(y_1)=\ldots=\om(y_k) = 0$.
\end{enumerate}
\end{lemma}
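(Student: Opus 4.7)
The plan is to compute the weight of a general $2$-level stacking $(x; y_1 \otimes \ldots \otimes y_k)$ via formula \eqref{weightformula1}, perform a case analysis on the three types defining $M \smsquareconn N$, and close the single delicate subcase using the hypothesis that weight-zero elements are thin.

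First, I would write $\om(x; y_1 \otimes \ldots \otimes y_k) = \om(x) + \sum_j \om(y_j) + c$, with $c$ the correction from \eqref{weightformula1}. A direct inspection shows $c = 0$ in Type II, Type III, and the Type I subcase in which all upper boxes $y_j$ are thin; the only subcase with non-trivial correction is Type I with $s := \#\{y_j \text{ non-thin}\} \geq 1$ (bottom box $x$ thin, at least one non-thin box on top), where the formula gives $c = 1 - s$.

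In every $c = 0$ case, both claims are immediate from $\om(x) + \sum_j \om(y_j) = n$ together with non-negativity of weights: if $\om(x) > 0$ then each $\om(y_j) \leq n - \om(x) < n$, and if $\om(x) = n$ then $\sum_j \om(y_j) = 0$ forces every $\om(y_j) = 0$.

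The main obstacle is the remaining Type I subcase, where the correction $c = 1 - s$ can be negative so the naive inequality $\om(y_j) \leq \sum_j \om(y_j)$ no longer gives $< n$ directly. Here I would invoke the hypothesis on $N^{(0)}$: every non-thin $y_j$ has $\om(y_j) \geq 1$, hence $\sum_j \om(y_j) \geq s$. For (1), suppose for contradiction $\om(y_{j_0}) \geq n$; separating whether $y_{j_0}$ is itself thin or non-thin and summing the contributions of the remaining non-thin $y_j$'s yields $\sum_j \om(y_j) \geq n + s - 1$, and combined with $\sum_j \om(y_j) = n - \om(x) + s - 1$ this forces $\om(x) \leq 0$, contradicting $\om(x) > 0$. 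For (2), setting $\om(x) = n$ in the same equation gives $\sum_j \om(y_j) = s - 1 < s$, incompatible with $\sum_j \om(y_j) \geq s$; thus this subcase cannot occur when $\om(x) = n$, and the claim reduces to the $c = 0$ analysis carried out above.
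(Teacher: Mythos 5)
Your proposal is correct and follows the same overall approach as the paper: compute the weight via formula \eqref{weightformula1}, split into cases using the connectedness hypothesis, and use the assumption that weight-zero elements are thin. However, you are more careful at the one delicate point: when the bottom box $x$ is thin and not all tops $y_j$ are thin, the paper tacitly passes to the situation where \emph{none} of the $y_j$ are thin (writing the correction as $1-k$ and asserting each $\om(y_j)\geq 1$), whereas the tops may in fact be a mixture of thin and non-thin boxes, giving the correction $1-s$ with $s = \#\{y_j \text{ non-thin}\}$. Your explicit bookkeeping of $s$ and the inequality $\sum_j \om(y_j)\geq s$ closes this gap and yields exactly the same conclusion.
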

\begin{opm}
This lemma does not hold for the box composite.
\end{opm}
\begin{proof}
According to formula \eqref{weightformula1}, we have 
$$n = \om(x) + \sum_{i=1}^k \om(y_i) + 1 + \#\text{ thin boxes } - (1+k)$$
If $y_1,\ldots, y_k$ are part of thin arities, then $n = \om(x) + \sum_{i=1}^k \om(y_i)$,
and thus the result follows as in the classical case. 

Assume now that they are not. Hence, $x$ labels a thin box and thus
$$n = \om(x) + \sum_{i=1}^k \om(y_i) + 1 - k = \om(x) + \sum_{i=1}^k (\om(y_i) -1) + 1.$$
As $y_i$ does not label a thin box, $\om(y_i) \geq 1$ by assumption. The result then follows immediately.
\end{proof}
\begin{lemma}\label{filtration}
Let $\bbb$ be a connected wdg box operad and $\ccc$ a connected conilpotent wdg box cooperad, then for a fixed weight $n$ we have a bounded below and exhaustive filtration
$$F_s( ( \sthin\inv \ccc \smsquare \bbb )^{(n)} ) := \bigoplus_{d+m \leq s} ( \sthin \inv \ccc_d^{(m)}  \; \smsquare \;  \bbb )^{(n)}$$
with second page computed as
\begin{equation}\label{E2}
E^2_{st}(( \sthin\inv \ccc \smsquare \bbb )^{(n)} ) \cong ( \bigoplus_{m=0}^n H_{s-m}(s_{\thin}\inv \ccc^{(m)}) \smsquare \underbrace{H}_{t+m}(\bbb) )^{(n)}.
\end{equation}
Moreover, this result also holds for the connected twisted subcomplex.
\end{lemma}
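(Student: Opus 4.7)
The plan is to apply the standard spectral-sequence-of-a-filtered-complex argument familiar from the operadic setting (see \cite[Lem.~6.7.2]{lodayvallette}), adapted to the box-composite framework.

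First I would confirm that $F_\bullet$ is preserved by $d_\al$ and gives a convergent spectral sequence. Decomposing the differential as $d_\al = d_{\sthin\inv\ccc}\smsquare\bbb + \sthin\inv\ccc \smsquarelinear d_\bbb + d_\al^r$, one verifies that the middle summand preserves both $d$ and $m$, the first lowers $d$ by one while preserving $m$, and the third strictly drops $d+m$: since $\ccc$ is connected, the thin infinitesimal decomposition $\De_{(1)}^{\sthin\inv}$ of a weight-$m$ element produces an upper factor of weight $m_1 \geq 1$ (to which $\al$ is then applied), so the surviving lower $\ccc$-piece sits in weight $m - m_1 < m$. Combined with $F_{-1}=0$, exhaustivity (every element has a finite $d+m$), and the boundedness of the filtration on each total-degree strip (for fixed weight $n$ and total degree $\ell$ any contributing filtration index satisfies $s \leq \ell + n$), this yields a convergent spectral sequence.

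On $E^0$ the only summand of $d_\al$ preserving the filtration exactly is $\sthin\inv\ccc \smsquarelinear d_\bbb$, so $d^0$ is a graded tensor-product differential acting only on the upper row of $\bbb$-labeled boxes; the Künneth formula over the field $k$ then yields
$$E^1_{st} \cong \bigoplus_{m=0}^n \bigl( \sthin\inv\ccc_{s-m}^{(m)} \smsquare H_{t+m}(\bbb) \bigr)^{(n)}.$$
The induced $d^1$ is the residual $d_{\sthin\inv\ccc} \smsquare H(\bbb)$ coming from the first summand, and a second application of Künneth on the lower factor produces the claimed identification \eqref{E2} of $E^2$.

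For the connected twisted subcomplex, I would define the restricted filtration $F_s \cap (\sthin\inv\ccc \smsquareconn \bbb)^{(n)}$. Since the three stacking conditions defining $\smsquareconn$ (thin bottom, thin top row, or empty top) are imposed summand-by-summand within each $F_s$, and since $d_\al$ restricts to the connected subcomplex by the proposition in \S\ref{parparconnected}, the entire spectral-sequence computation carries over verbatim. The main obstacle I anticipate is pinning down cleanly that $d_\al^r$ strictly lowers $d+m$: one must trace through the weight formula \eqref{weightformula1}, which carries a thin-versus-non-thin correction absent in the classical operadic case, to ensure that the upper factor of any nontrivial decomposition really carries positive weight after passage through $\sthin\inv$ and $\al$. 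Once this is in place, the rest is routine filtered-complex bookkeeping.
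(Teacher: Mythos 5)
Your overall strategy is the correct one and matches the paper's: filter by $d+m$ on the $\ccc$-factor, check convergence, use Künneth twice. But there is a genuine gap in the step that lets $d^r_\al$ vanish on the $E^2$ page.

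For the claimed form of $E^2$ to hold, the differential $d^1$ on $E^1$ must be exactly the residual $d_{\sthin\inv\ccc} \smsquare H(\bbb)$. That forces $d^r_\al F_s \subseteq F_{s-2}$, i.e.\ a drop of $d+m$ by at least \emph{two}. You only establish a drop of at least one: connectedness of $\ccc$ gives that the upper factor of $\De_{(1)}^{\sthin\inv}$ has weight $m_1 \geq 1$, so the surviving lower weight satisfies $m' \leq m-1$, but you say nothing about the homological degree $d'$ of the lower $\ccc$-factor. Since $\De_{(1)}^{\sthin\inv}$ has degree $-1$, the factors satisfy $d' + |c| = d - 1$ where $c$ is the upper factor. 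If $|c| = -1$ (which is possible in $\sthin\inv\ccc$, as $\ccc$ may well have weight-positive degree-$0$ elements) then $d' = d$, and $d'+m'$ drops only by one. In that case $d^r_\al$ contributes to $d^1$, your Künneth computation of $E^2$ fails, and the statement as claimed is not obtained. The missing ingredient is that any term with $|c| < 0$ is annihilated by $\al$, since $\al$ is a degree-preserving map $\sthin\inv\ccc \to \bbb$ and $\bbb$ is concentrated in nonnegative homological degrees. Thus the surviving upper factor has $|c| \geq 0$, whence $d' \leq d - 1$, and combined with the weight drop one gets $d'+m' \leq d + m - 2$. This is precisely what the paper's proof appeals to when noting that $\De_{(1)}^{\sthin\inv}$ has degree $-1$. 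You flag the weight check as the ``main obstacle'' but overlook the equally necessary degree check.

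One further remark: you correctly identify $\sthin\inv\ccc \smsquarelinear d_\bbb$ as the filtration-preserving summand and $d_{\sthin\inv\ccc}\smsquare\bbb$ as the one lowering $d+m$ by exactly one. The paper's proof asserts the opposite assignment (a benign typo there, since both assignments lead to the same $E^2$ by Künneth applied in either order), so on this point your bookkeeping is cleaner than the paper's.
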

\begin{proof}
Due to both the homological and weight being nonnegative, it is bounded below. Moreover, it is clearly exhaustive.

We posit that $(d_{s_{\thin}^{-1}\ccc} \smsquare \bbb)F_s \sub F_s, (s_{\thin}^{-1}\ccc \smsquare d_{\bbb})F_s \sub F_{s-1}$ and $d^r_\al F_s \sub F_{s-2}$. By definition, the first two inclusions are immediate. Let us show the third. Unfolding the definition of $d^r_\al$, it suffices to show that 
$$ (\sthin\inv \overccc \infsquare \sthin\inv \overccc)^{(m)} \sub \bigoplus_{m' \leq m -1} \sthin\inv \ccc^{(m')} \infsquare \sthin \inv \overccc$$
as a twisting morphism sends the coaugmentation $\eta$ to zero. This follows by the same reasoning as in Lemma \ref{lemweight} and $\ccc$ being connected. As $d^r_{\al}$ applies $\De_{(1)}^{\sthin\inv}$ of degree $-1$ to the bottom box, we obtain that $d^r_{\al}$ lowers the filtration by $2$. 
\end{proof}

\begin{lemma}\label{lemspectral}
Let $\mmm$ and $\mmm'$ be filtered complexes with exhaustive and bounded below filtrations, and $f:\mmm \longrightarrow \mmm'$ a quasi-isomorphism of filtered complexes. If $E^2_{st} f: E^2_{st} \mmm \longrightarrow  E^2_{st} \mmm'$ is an isomorphism for every $s,t$ such that $s\neq a$, then $E^2_{st} \Phi^{(n)}$ is an isomorphism for every $s,t$. 
\end{lemma}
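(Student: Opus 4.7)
The plan is to route the argument through the mapping cone $C := \mathrm{Cone}(f)$, equipped with the inherited filtration $F_s C_n = F_s \mmm'_n \oplus F_s \mmm_{n-1}$. This filtration remains bounded below and exhaustive, so the associated spectral sequence converges. Since $f$ is a quasi-isomorphism, the cone is acyclic, hence $E^\infty_{s,t}(C) = 0$ for every $s,t$.

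Next, the strict filtered short exact sequence $0 \to \mmm' \to C \to \mmm[-1] \to 0$ induces, via the morphism of exact couples, a long exact sequence on every page of the associated spectral sequences. At $E^2$ with $s$ fixed it reads
$$\cdots \to E^2_{s,t}(\mmm) \xrightarrow{E^2 f} E^2_{s,t}(\mmm') \to E^2_{s,t}(C) \to E^2_{s,t-1}(\mmm) \xrightarrow{E^2 f} E^2_{s,t-1}(\mmm') \to \cdots.$$
The hypothesis that $E^2 f$ is an isomorphism whenever $s \neq a$ then forces $E^2_{s,t}(C) = 0$ for all such $s$, so $E^2(C)$ is concentrated in the single column $s = a$.

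The key observation is that on this lone column the higher differentials automatically vanish: for every $r \geq 2$, the outgoing differential $d^r : E^r_{a,t}(C) \to E^r_{a-r, t+r-1}(C)$ and the incoming $d^r$ from $E^r_{a+r, t-r+1}(C)$ land in, respectively originate from, columns different from $a$, and those are zero by induction on $r$. Hence $E^2_{a,t}(C) = E^r_{a,t}(C) = E^\infty_{a,t}(C) = 0$, so in fact $E^2(C)$ vanishes identically. Reinserting $E^2_{a,t}(C) = 0$ into the long exact sequence at column $a$ yields exactness of the fragments $\cdots \to E^2_{a,t}(\mmm) \xrightarrow{E^2 f} E^2_{a,t}(\mmm') \to 0$ and $0 \to E^2_{a,t-1}(\mmm) \xrightarrow{E^2 f} E^2_{a,t-1}(\mmm') \to \cdots$, from which $E^2 f$ is an isomorphism on the missing column as well.

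The main technical point to nail down is a precise version of the long exact sequence at the $E^2$ page induced by a strict filtered short exact sequence; while standard via the exact couple formalism (\textit{cf.} Weibel \S 5.9 or McCleary), it should be invoked with some care, in particular because the connecting maps mix $(s,t)$ bidegrees in a way that needs bookkeeping. A secondary subtlety, for which the bounded below and exhaustive hypotheses are essential, is ensuring that the filtration on $C$ is compatible enough for $E^\infty(C)$ to genuinely compute $\mathrm{gr}\, H_*(C)$.
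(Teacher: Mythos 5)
Your mapping-cone route is genuinely different from the paper, which simply defers this step to the proof of \cite[Lem.~6.7.1]{lodayvallette}; as an approach it is clean, but there is a real gap exactly where you flag the delicacy, and it is not a mere bookkeeping issue. With the unshifted filtration $F_sC_n=F_s\mmm'_n\oplus F_s\mmm_{n-1}$ that you chose, the strict filtered short exact sequence $0\to\mmm'\to C\to\mmm[-1]\to 0$ yields a short exact sequence of $(E^0,d^0)$-complexes, hence (by the snake lemma) a long exact sequence on the $E^1$ page, with connecting map of bidegree $(0,-1)$. But that long exact sequence is not a short exact sequence of $(E^1,d^1)$-complexes, so one cannot take homology again: the claim that a strict filtered short exact sequence ``induces a long exact sequence on every page'' is false in this generality, and the $E^2$ long exact sequence you write (with connecting map still shifting $t$) is not available. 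Moreover even if it were, its bidegree shift would have to be $(-1,0)$, since it is $d^1$, not $d^0$, that is relevant at that stage.

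The fix is to use the shifted filtration $F_sC:=F_{s-1}\mmm_{n-1}\oplus F_s\mmm'_n$. The sequence $0\to\mmm'\to C\to\mmm[-1]\to 0$ is then no longer strict, so the general mechanism does not apply; instead one checks directly that the $f$-component of the cone differential strictly lowers the filtration degree of $C$ and therefore vanishes on $\mathrm{gr}$, so $(E^0(C),d^0)$ is a direct sum of $E^0(\mmm')$ with a shift of $E^0(\mmm)$. The $f$-component resurfaces one page later, and a short computation shows that $(E^1(C),d^1)$ is precisely the mapping cone, in the category of $d^1$-complexes, of $E^1(f)\colon E^1(\mmm)\to E^1(\mmm')$. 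Taking homology of this cone gives the genuine long exact sequence
\begin{equation*}
\cdots\to E^2_{s,t}(\mmm)\xrightarrow{E^2f}E^2_{s,t}(\mmm')\to E^2_{s,t}(C)\to E^2_{s-1,t}(\mmm)\xrightarrow{E^2f}E^2_{s-1,t}(\mmm')\to\cdots
\end{equation*}
with connecting map shifting $s$ by $-1$. The hypothesis now forces $E^2_{s,t}(C)=0$ for all $s\notin\{a,a+1\}$ (two adjacent columns, not one, because the long exact sequence involves both $E^2(f)$ in column $s$ and in column $s-1$). Your degree argument still applies verbatim: every $d^r$ with $r\geq 2$ moves the column by at least $2$, so it starts or ends in a zero column, whence $E^2(C)=E^\infty(C)$. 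The shifted filtration is still bounded below and exhaustive, and $C$ is acyclic, so $E^\infty(C)=0$. Feeding $E^2(C)=0$ back into the long exact sequence then makes $E^2f$ an isomorphism for every $(s,t)$, completing the argument.
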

\begin{proof}
This is extracted from the proof of \cite[Lem. 6.7.1]{lodayvallette}.
\end{proof}

\begin{prop}[Box operadic comparison lemma]\label{comparisonlemma}
Let $g: \bbb \longrightarrow \bbb'$ be a morphism of connected wdg box operads and $f: \ccc \longrightarrow \ccc'$ a morphism of connected wdg box cooperads. For $\al: \ccc \longrightarrow \bbb$ and $\al' : \ccc' \longrightarrow \bbb'$ twisting morphisms compatible with $(f,g)$, then
\begin{enumerate}
\item $f,g$ and $f \smsquareconn g: \ccc \smsquareconnsub{\al} \bbb \longrightarrow \ccc'  \smsquareconnsub{\al'} \bbb'$ satisfy the two-out-of-three property for quasi-isomorphisms.
\item if $f$ and $g$ are quasi-isomorphisms, then $f\smsquare g: \ccc \smsquaresub{\al} \bbb \longrightarrow \ccc'  \smsquaresub{\al'} \bbb'$ is a quasi-isomorphism.
\end{enumerate}
\end{prop}
\begin{proof}
The proof can be taken mutatis mutandis from \cite[Lemma 6.7.1]{lodayvallette}. We include the full proof for completeness, pinpointing where Lemma \ref{lemweight} is used.

For convenience, we write $\mmm := \ccc \smsquareconnsub{\al} \bbb, \mmm' := \ccc' \smsquareconnsub{\al'} \bbb'$ and $\Phi := f \smsquareconn g$, and add the superscript $(-)^{(n)}$ to denote its weight $n$ component.

Lemma \ref{filtration} shows that we have a convergent filtration on both $\mmm^{(n)}$ and $\mmm^{'(n)}$ such that
$$E^2\Phi^{(n)} = (H(f) \smsquareconn H(g))^{(n)}.$$

%
Assume $f$ and $g$ are quasi-isomorphisms, then $E^2\Phi^{(n)} = H(f) \smsquareconn H(f)$ is an isomorphism and thus, due to convergence, also $\Phi$. This is independent of working with the connected subcomplex or not.

%
Assume $\Phi$ is a quasi-isomorphism. If $g$ is a quasi-isomorphism, then we show that $f$ is a quasi-isomorphism as well. We proceed by induction on $n$ to show that $f^{(n)}$ is a quasi-isomorphism. For $n=0$, we simply have $f^{(0)}: k \longrightarrow k$ due to connectedness. Assume $n>0$. We show that $E^2_{st}\Phi^{(n)}$ is an isomorphism for every $t\neq -n$ and that $E^2_{s(-n)} \Phi^{(n)}= H_{s-n}(f^{(n)})$. By Lemma \ref{lemspectral}, we conclude that $H_{s-n}(f^{(n)})$ is an isomorphism.



Let us analyse $E^2_{st}(\Phi^{(n)})$ based on $t$. Remember from Lemma \ref{lemspectral} that 
$$E^2_{st}(( \sthin\inv \ccc \smsquare \bbb )^{(n)} ) \cong ( \bigoplus_{m=0}^n H_{s-m}(s_{\thin}\inv \ccc^{(m)}) \smsquare \underbrace{H}_{t+m}(\bbb) )^{(n)}.$$
\begin{itemize}
\item[$t<-n$] As the grading of $\bbb$ is nonnegative, we have $0 \leq t + m < -n + m$ and $0 \leq m \leq n$, thus $E^2_{st}(\mmm^{(n)}) = 0$. Hence, $E^2_{st}(\Phi^{(n)})=0$ is an isomorphism. 
\item[$t= -n$] Analogously, the above reasoning implies that we are reduced to the component $m = n$ and thus 
$$E^2_{s(-n)}(\mmm^{(n)}) = (H_{s-n}(\sthin \inv \ccc^{(n)}) \smsquareconn \underbrace{H}_{0}(\bbb))^{(n)} $$
Due to Lemma \ref{lemweight}, we can conclude $E^2_{s(-n)}(\mmm^{(n)}) \cong H_t(\sthin^{-1} \ccc^{(n)})$. As a result, $E^2_{-1t}(\Phi^{(n)}) = H_t(f^{(n)})$.
\item[$t>-n$] In this case, we have that $0 \leq m <n$ and thus by induction $E^2_{st} \Phi^{(n)}$ is an isomorphism.
\end{itemize}

Reversely, assume $f$ is a quasi-isomorphism instead of $g$, then we show that $g$ is a quasi-isomorphism. The proof is completely analogous save the analysis of the morphism $E^2 \Phi^{(n)}$. Now, we show that $E^{2}_{st}\Phi^{(n)}$ is an isomorphism for $s \neq -1$ and $E^2_{-1t} \Phi^{(n)} = H_{t}(g^{(n)})$. Let us analyse $E^{2}_{st}\Phi^{(n)}$ based on $s$.
\begin{itemize}
\item[$s<-1$] As the grading of $\sthin \inv \ccc$ is greater than or equal to $-1$, we have that $H_{s-m}(s^{-1}_{\thin}\ccc^{(m)}) = 0$ for $m\geq 0$. Indeed,  we have that $-1 \leq s-m < -1 -m$ and thus $m < 0$. Hence, $E^2_{st}(\mmm^{(n)}) = 0$ and thus $E^2_{st}(\Phi^{(n)})=0$ is an isomorphism. 
\item[$s= -1$] Analogously, the above reasoning implies that we are reduced to the component $m = 0$ and thus 
$$E^2_{-1t}(\mmm^{(n)}) = (H_{-1}(\sthin \inv \ccc^{(0)}) \smsquare \underbrace{H}_{t}(\bbb))^{(n)} \cong H_t(\bbb)^{(n)}$$
Hence, $E^2_{-1t}(\Phi^{(n)}) = H_t(g^{(n)})$.
\item[$s>-1$] In this case $0<m\leq n$, and thus, by Lemma \ref{lemweight}, we have that
$$H(f) \smsquareconn H(g): (H_{s-m}(\sthin^{-1} \ccc^{(m)}) \smsquareconn \underbrace{H}_{t+m}(\bbb))^{(n)} \longrightarrow (H_{s-m}(\sthin^{-1} \ccc^{'(m)}) \smsquareconn \underbrace{H}_{t+m}(\bbb'))^{(n)} $$
is composed of $H(g)$ and $H(f^{(n_i)})$ of weight $n_i < n$. Hence, $E^{2}_{st} \Phi^{(n)}$ is an isomorphism by induction.
\end{itemize}
\end{proof}

\begin{theorem}[Partial Fundamental Theorem]\label{partialfundamental}
Let $\bbb$ be a connected wdg box operad and $\ccc$ a conilpotent connected wdg box cooperad. Let $\al: \ccc \longrightarrow \bbb$ be a twisting morphism, then the following are equivalent
\begin{enumerate}
\item the connected twisted complex $\ccc \smsquareconnsub{\al} \bbb$ is acyclic,
\item the morphism $f_{\al}: \ccc \longrightarrow \B \bbb$ is a quasi-isomorphism.
\end{enumerate}
Moreover, if $g_{\al}: \Om \ccc \longrightarrow \bbb$ is a quasi-isomorphism, then $\ccc \smsquaresub{\al} \bbb$ is acyclic. 
\end{theorem}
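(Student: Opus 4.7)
The plan is to reduce both assertions to the acyclicity statements of Lemma \ref{acyclic} by factoring the twisting morphism $\al$ through the universal twisting morphisms of the bar-cobar adjunction (Theorem \ref{thmrosetta}) and invoking the box operadic comparison lemma (Proposition \ref{comparisonlemma}).

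For the equivalence (1)$\Leftrightarrow$(2), I would use the adjunction to produce the unique box cooperad morphism $f_\al: \ccc \longrightarrow \B\bbb$ corresponding to $\al$, which by construction makes the pair $(f_\al, \Id_\bbb)$ compatible with the twisting morphisms $\al$ and $\pi$. Functoriality of the connected twisted complex then yields a chain map
$$f_\al \smsquareconn \Id_\bbb : \ccc \smsquareconnsub{\al} \bbb \longrightarrow \B\bbb \smsquareconnsub{\pi} \bbb.$$
Since the target is acyclic by Lemma \ref{acyclic}, and $\Id_\bbb$ is trivially a quasi-isomorphism, the two-out-of-three property of Proposition \ref{comparisonlemma}(1), applied to the triple $\{f_\al, \Id_\bbb, f_\al \smsquareconn \Id_\bbb\}$, tells me that $f_\al$ is a quasi-isomorphism if and only if the chain map above is a quasi-isomorphism, which in turn (as the target has trivial homology) is equivalent to the source being acyclic.

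For the final claim, I would proceed dually with the box operad morphism $g_\al: \Om\ccc \longrightarrow \bbb$ corresponding to $\al$. Compatibility of the pair $(\Id_\ccc, g_\al)$ with $\iota$ and $\al$ is again built into the adjunction, yielding a chain map
$$\Id_\ccc \smsquare g_\al : \ccc \smsquaresub{\iota} \Om\ccc \longrightarrow \ccc \smsquaresub{\al} \bbb.$$
Assuming $g_\al$ is a quasi-isomorphism, Proposition \ref{comparisonlemma}(2) applied to $\Id_\ccc$ and $g_\al$ makes this a quasi-isomorphism, and the acyclicity of the source (again Lemma \ref{acyclic}) transfers to the target.

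The main obstacle is not in the arguments themselves, which are essentially formal, but in the visible asymmetry between the two halves of the theorem: only one implication is obtained on the cobar side. This asymmetry is dictated by the shape of Lemma \ref{acyclic}, which supplies acyclicity of the connected complex $\B\bbb \smsquareconnsub{\pi} \bbb$ and of the full complex $\ccc \smsquaresub{\iota} \Om\ccc$, but not of their respective counterparts (see Remark \ref{rmkacyclic}). Upgrading the second half to a bi-implication would require either extending the homotopies of Lemma \ref{acyclic} to $\ccc \smsquareconnsub{\iota} \Om\ccc$ or strengthening the comparison lemma to the full twisted complex; both routes appear blocked in general, which is precisely the motivation for the authors' subsequent restriction to inclined box operads.
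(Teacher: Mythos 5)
Your proposal is correct and matches the paper's approach exactly. The paper's one-line proof simply cites Lemma~\ref{acyclic} and Proposition~\ref{comparisonlemma}, and the content you supply — factoring $\al$ through $\pi$ (resp.\ $\iota$) via the bar--cobar adjunction, applying functoriality of the twisted complex to the compatible pairs $(f_\al,\Id_\bbb)$ and $(\Id_\ccc, g_\al)$, and then invoking the two-out-of-three clause of Proposition~\ref{comparisonlemma}(1) against the acyclic target $\B\bbb \smsquareconnsub{\pi}\bbb$ for the equivalence, and Proposition~\ref{comparisonlemma}(2) together with the acyclicity of $\ccc\smsquaresub{\iota}\Om\ccc$ for the one-way cobar statement — is precisely what that citation stands for.
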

\begin{proof}
This is an immediate consequence of combining Lemma \ref{acyclic} and Theorem \ref{comparisonlemma}.
%
%
\end{proof}
\begin{opm}
Note that a third equivalent statement is missing: as long as we do not know that $\ccc \circ_\iota \Om \ccc$ is acyclic in general, we cannot conclude that $\ccc \circ_\al \bbb$ is acyclic if and only if $g_{\al}:\Om \ccc \longrightarrow \bbb$ is a quasi-isomorphism.
\end{opm}

\subsection{Inclined box operads} \label{parparinclined}

\begin{mydef}
A $\N^3$-dg module $N$ is \emph{(left) inclined} if for $p<r$ and $q\in \N$ holds $N(p,q,r) =0$.

A dg box (co)operad $\bbb$ is \emph{inclined} if its underlying $\N^3$-dg module is inclined.
\end{mydef}
\begin{opm}
Note that we can equivalently work with \emph{right inclined} by reversing the roles $p$ and $r$.
\end{opm}

\begin{vb}
For an inclined $\N^3$-dg module $N$, the (conilpotent) (co)free box (co)operad is inclined.
\end{vb}

\begin{lemma}\label{aritylemma}
Let $M$ and $N$ be inclined $\N^3$-dg modules and consider an element
$$x = \; \scalebox{0.85}{$\tikzfig{operadiclemma3}$} \; \in M \smsquare N$$
then we have
\begin{enumerate}[label=(\Roman*)]
\item $p_{i-1}+q_i \leq p+q$ for $i=1,\ldots,k$, \label{operadicfcoperads1}
\item if $p_{i-1} + q_i = p+q$ for some $i\in \{1,\ldots,k\}$, then $x_0$ is thin, i.e. $p'=r'=0$. 
\end{enumerate}
\end{lemma}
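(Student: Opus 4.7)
The plan is to unwind the definition of the box composite, read off the constraints on the arities, and exploit the inclined hypothesis to control the intermediate edge labels $p_0,\ldots,p_k$.

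Following the definition of $\smsquare$, a nonzero summand of $(M \smsquare N)(p,q,r)$ is specified by a bottom box $x_0 \in M(p',k,r')$ together with top boxes $x_i \in N(p_{i-1},q_i,p_i)$ for $i=1,\ldots,k$, subject to the constraints $p = p_0 + p'$, $r = p_k + r'$ and $q = q_1 + \ldots + q_k$. Since $N$ is inclined and each $x_i$ is nonzero, I obtain the monotonicity chain $p_0 \geq p_1 \geq \ldots \geq p_k$; in particular $p_0 \geq p_{i-1}$ for every $i \in \{1,\ldots,k\}$.

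For (I), I would then simply compute
$$p + q - (p_{i-1} + q_i) \;=\; (p_0 - p_{i-1}) \;+\; p' \;+\; \sum_{j \neq i} q_j,$$
and observe that every summand on the right is non-negative. For (II), if this difference vanishes then each of these three non-negative summands must vanish individually; in particular $p' = 0$. Applying the inclined hypothesis now to $M$, the nonzero box $x_0 \in M(p',k,r')$ forces $r' \leq p' = 0$, so $r' = 0$ as well, which is exactly the thinness of $x_0$.

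The argument is elementary once the constraints have been unpacked; the only care needed is to track which of the two inclined hypotheses is being used at each step. The monotonicity chain $p_0 \geq \ldots \geq p_k$ relies on $N$ being inclined, whereas the final implication $p' = 0 \Rightarrow r' = 0$ uses the inclined hypothesis on $M$. I do not anticipate any real obstacle --- the computation is of the same flavour as the arity bookkeeping already appearing in the weight grading arguments of Lemma \ref{lemweight}.
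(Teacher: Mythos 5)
Your proof is correct and follows essentially the same approach as the paper's: both hinge on the monotonicity chain $p_0 \geq p_1 \geq \ldots \geq p_k$ coming from the inclined hypothesis on $N$, and on $p' \geq r'$ from the inclined hypothesis on $M$. Rewriting $p+q-(p_{i-1}+q_i)$ as an explicit sum of three non-negative terms is a mild stylistic repackaging of the paper's chain of inequalities, not a different argument.
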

\begin{proof}
We will use the following facts about the arities of elements in the composite product $M\smsquare N$
\begin{enumerate}
\item $0 \leq q_i$ and $0 \leq p_i$,\label{arities1}
\item $p = p' + p_0$, $r = r'+ p_k$ and $q = q_1 + \ldots + q_k$, \label{arities2}
\end{enumerate}
For inclined box (co)operads, we can moreover say for $i=1,\ldots,k$ that
\begin{enumerate}[resume]
\item $0 \leq p_i \leq p_{i-1}$ and $p'\geq r'$. \label{arities4}
\end{enumerate}
We start by showing property \ref{operadicfcoperads1}. Due to \eqref{arities1} and \eqref{arities2}, we have $q = \sum_{i=1}^n q_i \geq q_i$. Due to \eqref{arities4}, we have 
$$ p= p' + p_0 \geq p_1 \geq \ldots \geq p_k$$
Hence, $p+q \geq p_{i-1} +q_i$. 

Now, assume $p+q= p_{i-1}+q_i$ for some $i \in \{1,\ldots,k\}$. As we have $p+q \geq p'+ p_0 +q_i \geq p' + p_{i-1} +q_i$, we deduce $p' = 0$ and thus $r'=p'=0$ due to \eqref{arities4}. 
\end{proof}

This is sufficient to provide an efficient inductive argument to prove the following.

\begin{prop}[Inclined box operadic comparison lemma] \label{operadiclemmainclined}
Let $f:\ccc \longrightarrow \ccc'$ be a morphism of inclined connected conilpotent wdg box cooperads, $g:\bbb \longrightarrow \bbb'$ a morphism of inclined connected wdg box operads and $\al: \ccc \longrightarrow \bbb$ and $\al' \ccc' \longrightarrow \bbb'$ twisting morphisms compatible with $(f,g)$. If $f\smsquare g: \ccc \smsquaresub{\al} \bbb \longrightarrow \ccc' \smsquaresub{\al'} \bbb'$ and $f$ are quasi-isomorphisms, then so is $g$.
\end{prop}
\begin{proof}
We adapt the proof from Proposition \ref{comparisonlemma}: we solely have to provide a new argument where Lemma \ref{lemweight} is used.

First, we observe that on the thin parts, this reduces to the classical case for operads. Hence, we can assume to work in the non-thin part.

The proof goes by induction on $n$ to show that $f^{(n)}$ is a quasi-isomorphism. For our purposes, Lemma \ref{lemweight} is only used to show that 
\begin{equation}\label{Hfg}
E^2_{st}\Phi^{(n)} = H(f) \smsquareconn  H(g) : (H_{s-m}(\sthin^{-1} \ccc^{(m)}) \smsquareconn \underbrace{H}_{t+m}(\bbb))^{(n)} \longrightarrow (H_{s-m}(\sthin^{-1} \ccc^{'(m)}) \smsquareconn \underbrace{H}_{t+m}(\bbb'))^{(n)}
\end{equation}
is an isomorphism for $s>-1$ and $0 < m \leq n$. We replace the connected box composite by the box composite and proceed by additional induction on $p+q \geq 0$ to show that $g^{(n)}_{p,q,r}$ is a quasi-isomorphism. Note that the above constructions all restrict well according to arity. If $p+q= 0$, we are in the thin case which is already covered. Assume $p+q>0$.
Consider a general element 
$$ \scalebox{0.85}{$\tikzfig{operadiclemma2}$} \in (H_{s-m}(\sthin^{-1} \ccc^{(m)}) \smsquareconn \underbrace{H}_{t+m}(\bbb))^{(n)}$$
where each element $x_i$ has weight $n_i$. For $i>0$,  $x_i$ has arity $(p_{i-1},q_i,p_i)$ and $x_0$ has arity $(p',k,r')$. We then have
$$E^2_{st}(\Phi^{(n)})([x_0];[x_1],\ldots,[x_k]) = (H(f^{(n_0)})[x_0];H(g^{(n_1)})[x_1], \ldots , H(g^{(n_k)})[x_k])$$
As $H(f)$ is a quasi-isomorphism, we solely need to show that the $H(g_{p_{i-1},q_i,p_i}^{(n_i)})$ are as well. By Lemma \ref{aritylemma}, we know that either $p_{i-1}+q_i<p+q$ for each $i=1,\ldots,k$, or the bottom box is thin. In the latter case, this means that the weights $n_1,\ldots,n_k$ are strictly smaller than $n$ (see Lemma \ref{lemweight}). Hence, in both cases, by induction $H(g^{(n_i)}_{p_i,q_i,r_i})$ is an isomorphism. Thus, the map \eqref{Hfg} is an isomorphism. 
\end{proof}

\begin{theorem}[Inclined Fundamental Theorem]\label{thminclinedfundamental}
Let $\bbb$ be an inclined connected wdg box operad and $\ccc$ an inclined conilpotent connected wdg box cooperad. Let $\al: \ccc \longrightarrow \bbb$ be a twisting morphism, then the following are equivalent
\begin{enumerate}
\item the twisted complex $\ccc \smsquaresub{\al} \bbb$ is acyclic,
\item the morphism $g_{\al}:\Om \ccc \longrightarrow \bbb$ is a quasi-isomorphism.
\end{enumerate} 
\end{theorem}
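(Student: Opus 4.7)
The plan is to adapt the classical Loday--Vallette argument by routing through the universal twisting morphism $\iota: \ccc \longrightarrow \Om\ccc$ and applying the inclined box operadic comparison lemma (Proposition \ref{operadiclemmainclined}) in place of the unavailable full-complex comparison lemma. The key observation is that $\al$ factors as $\al = g_\al \circ \iota$, where $g_\al: \Om\ccc \longrightarrow \bbb$ is the box operad morphism canonically associated to $\al$ via the bar--cobar adjunction. This factorisation means $\iota$ and $\al$ are compatible twisting morphisms via the pair $(\id_\ccc, g_\al)$, so by the functoriality established just after Proposition \ref{difftwisting} we obtain a chain map
$$\Psi := \sthin\inv\id_\ccc \smsquare g_\al : \ccc \smsquaresub{\iota} \Om\ccc \longrightarrow \ccc \smsquaresub{\al} \bbb.$$

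For the implication $(2) \Rightarrow (1)$, the inclined hypothesis is in fact not needed: assuming $g_\al$ is a quasi-isomorphism, part $(2)$ of the (non-inclined) box operadic comparison lemma (Proposition \ref{comparisonlemma}) applied to the pair $(\id_\ccc, g_\al)$ — noting that $\Om\ccc$ is connected when $\ccc$ is — tells us that $\Psi$ is a quasi-isomorphism. Since $\ccc \smsquaresub{\iota} \Om\ccc$ is acyclic by Lemma \ref{acyclic}, so is $\ccc \smsquaresub{\al} \bbb$. This step just reproduces the ``moreover'' part of Theorem \ref{partialfundamental}, now transported to the full (non-connected) twisted complex thanks to Lemma \ref{acyclic}.

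For the implication $(1) \Rightarrow (2)$, assume $\ccc \smsquaresub{\al} \bbb$ is acyclic. Then both source and target of $\Psi$ have vanishing homology, so $\Psi$ is trivially a quasi-isomorphism. Now apply the inclined box operadic comparison lemma (Proposition \ref{operadiclemmainclined}) with $f = \id_\ccc$ (obviously a quasi-isomorphism) and $g = g_\al$: the hypotheses require $\ccc, \ccc', \bbb, \bbb'$ to be inclined and connected, which we have since both $\ccc$ and $\bbb$ are inclined connected wdg box (co)operads and $\Om\ccc$ is inclined connected as the cobar of an inclined connected conilpotent wdg box cooperad (this inheritance is immediate from the formulas for the cobar's underlying $\N^3$-wdg module). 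We conclude that $g_\al$ is a quasi-isomorphism.

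The only potential obstacle I anticipate is verifying that $\Om\ccc$ satisfies the inclined and connected hypotheses required to invoke Proposition \ref{operadiclemmainclined}; however, inclined-ness passes to the cofree box operad on an inclined module (as noted in the example following the definition of inclined), and the weight grading on $\Om\ccc$ is manifestly connected since $\ccc$ is. Once this is in place, the proof is a straightforward combination of Lemma \ref{acyclic}, Propositions \ref{comparisonlemma}(2) and \ref{operadiclemmainclined}, mirroring the structure of Theorem \ref{partialfundamental} but using the full twisted complex throughout.
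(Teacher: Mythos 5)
Your proposal is correct and follows essentially the same route as the paper: for $(2)\Rightarrow(1)$ you invoke the ``moreover'' part of Theorem \ref{partialfundamental} (equivalently, Proposition \ref{comparisonlemma}(2) together with Lemma \ref{acyclic}), and for $(1)\Rightarrow(2)$ you note that $\ccc\smsquare g_\al:\ccc\smsquaresub{\iota}\Om\ccc\longrightarrow\ccc\smsquaresub{\al}\bbb$ is a quasi-isomorphism between acyclic complexes and then apply the inclined comparison lemma (Proposition \ref{operadiclemmainclined}) with $f=\id_\ccc$ to extract that $g_\al$ is a quasi-isomorphism. The only additional detail you flagged — that $\Om\ccc$ inherits the inclined and connected properties — is indeed needed and is left implicit in the paper; your justification via the cofree construction is the right one.
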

\begin{proof}
Due to Theorem \ref{partialfundamental}, it suffices to only prove the direction if $\ccc \smsquaresub{\al} \bbb$ is acyclic. In that case, due to Proposition \ref{acyclic} the map $\ccc \smsquare g_{\al}$ is a quasi-isomorphism and thus, due to Proposition \ref{operadiclemmainclined}, the morphism $g_{\al}$ as well.
\end{proof}

\subsection{Koszul box operads} \label{parparkoszul}

\subsubsection{Thin quadratic box operads} 
Let $\bbb$ be a box operad. 
\begin{mydef}
A \emph{box operadic ideal} $\mathcal{I}$ of $\bbb$ is a $\N^3$-submodule of $\bbb$ such that for any composable family of operations $(x;x_1,\ldots,x_n)$ of $\bbb$, if one of them is in $\mathcal{I}$, then so is their composition $x \circ (x_1,\ldots,x_n)$. 
\end{mydef}
\begin{opm}
The conditions of a box operadic ideal $\mathcal{I} \subseteq \bbb$ are exactly such that the box operad structure transfers to the $\N^3$-module quotient $\faktor{\bbb}{\mathcal{I}}$.
\end{opm}
Observe that for any $\N^3$-submodule $R \subseteq \bbb$, we have a smallest box operadic ideal $(R)\subseteq \bbb$ generated by $R$. 

A \emph{presentation} of $\bbb$ consists of a $\N^3$-module $E$ and a $\N^3$-submodule $R \subseteq \Tboxop(E)$ such that $\bbb \cong \faktor{\Tboxop(E)}{(R)}$ as box operads, and we write $\bbb= \bbb(E,R)$. 

\begin{mydef}
A box operad $\bbb$ is \emph{thin-quadratic} if it admits a \emph{thin-quadratic} presentation $\bbb= \bbb(E,R)$, that is, we have $R \sub E \infsquare E$. 
\end{mydef}
Let $\bbb=\bbb(E,R)$ be thin-quadratic, then we associate a canonical \emph{Koszul dual graded box cooperad} denoted
$\bbb^{\antishriek}$.
\begin{mydef}\label{defkoszuldualboxcooperad}
Let $\bbb^{\antishriek}$ be the box subcooperad $\ccc(\sthin E,\sthin^2 R)$ of the conilpotent cofree box cooperad $\Tboxcoop(\sthin E)$ which is universal among the box subcooperads $\ccc$ of $\Tboxcoop(\sthin E)$ such that the composite
$$\ccc \hookrightarrow \Tboxcoop(\sthin E) \twoheadrightarrow \faktor{\Tboxcoop(\sthin E)^{(2)}}{\sthin^2 R}$$
is $0$. That is, there then exists a unique morphism of box cooperads $\ccc \longrightarrow \bbb^{\antishriek}$ making the diagram
$$ 
\begin{tikzcd}
\ccc \arrow[d] \arrow[r]      & \Tboxcoop(\sthin E) \\
\bbb^{\antishriek} \arrow[ru] &             
\end{tikzcd}$$
commute. 
\end{mydef}


 Consider the associated morphism of degree $0$
$$\ka: \bbb^{\antishriek} \twoheadrightarrow \sthin E \hookrightarrow \sthin \bbb,$$
then the following holds. 
\begin{lemma}
Both $\bbb$ and $\bbb^{\antishriek}$ are connected weight graded, and $\ka$ is a twisting morphism preserving the weight. Moreover, if $\bbb$ is inclined, then so is $\bbb^{\antishriek}$.
\end{lemma}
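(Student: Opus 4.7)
My plan is to verify the four claims in sequence; three are formal consequences of the (co)free construction, and only the twisting equation requires a genuine computation.

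First, I would equip the free box operad $\Tboxop(E)$ and the cofree conilpotent box cooperad $\Tboxcoop(\sthin E)$ with the natural weight gradings from Example \ref{weightfree}, namely $\om(\eta)=0$ and $\om(x)=1$ on generators. Since $R \subseteq E \infsquare E$ and $\sthin^2 R \subseteq \sthin E \infsquare \sthin E$ sit entirely in weight $2$, the operadic ideal $(R)$ and the corelations cutting out $\bbb^{\antishriek}$ are weight-homogeneous, so $\bbb = \Tboxop(E)/(R)$ and $\bbb^{\antishriek}$ inherit weight gradings from their (co)free envelopes. Connectedness then follows because the weight-zero piece of each (co)free (co)operad is $k \Id$ in degree $0$, which survives the quotient on one side and sits inside the subcooperad on the other.

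Next, I would check that $\ka$ is a weight-preserving twisting morphism. Weight preservation is immediate: $\ka$ is the identity on the weight-$1$ summand $\sthin E \subseteq \bbb^{\antishriek}$ and zero elsewhere. Both $\bbb$ and $\bbb^{\antishriek}$ carry no internal differential in the thin-quadratic setup, so Proposition \ref{difftwisting} reduces the check to $(d_\ka^r)^2 = 0$, which amounts to vanishing of the Maurer--Cartan composite
$$\bbb^{\antishriek} \xrightarrow{\De_{(1)}^{\sthin \inv}} \sthin\inv \bbb^{\antishriek} \infsquare \sthin\inv \bbb^{\antishriek} \longrightarrow \bbb \infsquare \bbb \longrightarrow \bbb.$$
Only weight-$2$ inputs can contribute, and by construction the weight-$2$ part of $\bbb^{\antishriek}$ is precisely $\sthin^2 R \subseteq \sthin E \infsquare \sthin E$; two applications of $\ka$ identify its image in $\bbb \infsquare \bbb$ with $R \subseteq E \infsquare E$, which vanishes upon composition in $\bbb$.

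Finally, if $\bbb$ is inclined then $E \hookrightarrow \bbb$ is inclined, and since the thin suspension preserves arities, so is $\sthin E$. By the example just before the lemma, $\Tboxcoop(\sthin E)$ is then inclined, whence its subcooperad $\bbb^{\antishriek}$ inherits this property. The only step demanding real verification is the Maurer--Cartan computation, but this is essentially a direct translation of the classical operadic argument, the point being that weight-homogeneity forces the weight-$2$ part of $\bbb^{\antishriek}$ to equal $\sthin^2 R$ and that $R$ is killed in $\bbb$.
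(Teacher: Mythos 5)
Your argument coincides with the paper's in its essentials: weight-homogeneity of $R \subseteq E \infsquare E$ endows $\bbb$ and $\bbb^{\antishriek}$ with the weight gradings inherited from their (co)free envelopes, and the twisting condition is discharged by noting that $\ka$ vanishes outside weight~$1$, so the Maurer--Cartan expression can only contribute in weight~$2$, where it factors through $R$ and therefore dies in $\bbb = \Tboxop(E)/(R)$. You also spell out the connectedness and inclinedness checks, which the paper's proof leaves implicit; these are correct.

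There is one logical wrinkle to repair. You cite Proposition \ref{difftwisting} to ``reduce the check to $(d_\ka^r)^2 = 0$,'' but that proposition is stated as a one-way implication (twisting morphism $\Rightarrow$ $d_\al$ is a differential); it does not give you the converse, so you cannot infer that $\ka$ is a twisting morphism from $(d_\ka^r)^2 = 0$. The twisting/Maurer--Cartan equation should be verified directly, which is what the paper does: it writes the condition as the vanishing of $\sum_{n\geq 2} P_n(\ka,\ldots,\ka)$. This sum is not a priori just the binary $\De_{(1)}^{\sthin\inv}$-composite you write down, so you should also dispose of the higher $P_n$ terms -- the same weight bookkeeping does the job (any nonzero contribution forces each factor into weight~$1$, and the resulting weight-$2$ output still factors through $\sthin^2 R \to \bbb$), but it should be said rather than silently restricting to $n=2$.
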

\begin{proof}
As $\bbb$ is thin-quadratic, the ideal $(R)$ is homogeneous with respect to the weight grading and thus $\bbb$ is naturally weight graded by $\Tboxop(E)$. Similarly for $\bbb^{\antishriek}$.

Since $\ka$ is $0$, except in weight $1$, the map $ \sum_{n \geq 2} P_n(\ka,\ldots,\ka)$ is only possibly non-zero in weight $2$. However, it then factors as 
$$(\bbb^{\antishriek})^{(2)} =  \ccc (\sthin E ,\sthin^2 R)^{(2)} \longrightarrow \sthin R^{(2)} \longrightarrow \Tboxop(\sthin E)^{(2)} \twoheadrightarrow \bbb$$
which is clearly $0$.	  
\end{proof}
\begin{mydef}
For a thin-quadratic box operad $\bbb$, we define its \emph{Koszul complex} as the twisted complex $\bbb^{\antishriek}\smsquaresub{\ka} \bbb$.
\end{mydef}

\subsubsection{Koszul box operads}

\begin{mydef}\label{defkoszul}
A thin-quadratic box operad is \emph{Koszul} if its Koszul complex is acyclic.
\end{mydef}

\begin{theorem}\label{thmkoszul}
Let $\bbb$ be a thin-quadratic inclined box operad, then the following are equivalent
 \begin{enumerate}
 \item $\bbb$ is \emph{Koszul},
 \item $\bbb^{\antishriek} \smsquaresub{\ka} \bbb$ is acyclic,
 \item $\Om \bbb^{\antishriek} \longrightarrow \bbb$ is a quasi-isomorphism.
\end{enumerate}
In particular, $\Om \bbb^{\antishriek}$ is the minimal model of $\bbb$.
\end{theorem}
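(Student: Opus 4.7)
The plan is that Theorem \ref{thmkoszul} follows formally from the machinery already developed, with the three conditions chaining together via the definition of Koszulness and the Inclined Fundamental Theorem (Theorem \ref{thminclinedfundamental}).

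First, I would observe that the equivalence (1) $\Leftrightarrow$ (2) is immediate from Definition \ref{defkoszul}, which \emph{defines} Koszulness as the acyclicity of the Koszul complex $\bbb^{\antishriek} \smsquaresub{\ka} \bbb$.

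Next, for the equivalence (2) $\Leftrightarrow$ (3), I would apply Theorem \ref{thminclinedfundamental} to the twisting morphism $\ka: \bbb^{\antishriek} \to \bbb$. The hypotheses are exactly those supplied by the lemma preceding Definition \ref{defkoszul}: both $\bbb$ and $\bbb^{\antishriek}$ are connected wdg box (co)operads, $\ka$ is a weight-preserving twisting morphism, and $\bbb^{\antishriek}$ inherits inclinedness from the assumption that $\bbb$ is inclined. Conilpotency of $\bbb^{\antishriek}$ follows from its construction as the cooperad cogenerated by $\sthin E$ with corelations of weight $2$, for which the cooperad decomposition terminates finitely on each element. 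Applying Theorem \ref{thminclinedfundamental} then yields that $\bbb^{\antishriek} \smsquaresub{\ka} \bbb$ is acyclic if and only if $g_\ka : \Om \bbb^{\antishriek} \to \bbb$ is a quasi-isomorphism, giving (2) $\Leftrightarrow$ (3).

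For the minimal model claim, I would argue that $\Om \bbb^{\antishriek} = \Tboxop(\sthin\inv \overline{\bbb^{\antishriek}})$ is quasi-free by construction, and that the differential $d_\iota$ induced by the reduced coproduct of $\bbb^{\antishriek}$ is decomposable in the sense of minimal models. Indeed, the cogenerators of $\bbb^{\antishriek}$ live in weight $\geq 1$, and the reduced thin infinitesimal decomposition $\De_{(1)}^{\sthin\inv}$ sends each cogenerator to a sum of stackings of at least two cogenerators of strictly smaller weight. Combined with the quasi-isomorphism produced by (3), this exhibits $\Om \bbb^{\antishriek}$ as a minimal model of $\bbb$.

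The main obstacle is essentially bookkeeping: ensuring that the four hypotheses of Theorem \ref{thminclinedfundamental} (inclined, connected, wdg, conilpotent, with a compatible twisting morphism) descend to $\bbb^{\antishriek}$ under the mere thin-quadraticity assumption on $\bbb$. This is packaged into the preceding lemma, so the theorem reduces to a one-line application once the dictionary is in place. The minimal model assertion, while formally routine, requires a clearly stated notion of minimality for box operads (decomposability of the differential on a quasi-free box operad) which may warrant an explicit short verification using the weight-graded arguments of $\S \ref{parparparweight}$.
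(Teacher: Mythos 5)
Your proposal follows exactly the paper's argument: (1) $\Leftrightarrow$ (2) is Definition \ref{defkoszul}, and (2) $\Leftrightarrow$ (3) is a direct application of Theorem \ref{thminclinedfundamental}, whose hypotheses are supplied by the lemma preceding the definition of the Koszul complex. Your additional unpacking of the minimal-model claim (quasi-freeness of $\Om \bbb^{\antishriek}$ and decomposability of the induced differential) is a sensible elaboration of what the paper leaves implicit under the word ``immediate,'' but does not constitute a different route.
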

\begin{proof}
Immediate from Definition \ref{defkoszul} and Theorem \ref{thminclinedfundamental}.
\end{proof}

\section{Application $1$: a minimal model for the box operad encoding morphisms of $\ppp$-algebras}
\label{section:applicationmorphismPalgebras}

As a first application, we apply Koszul duality to the box operad $\Mor(\ppp)$ for $\ppp$ a nonsymmetric Koszul operad. As it encodes a (collection of) morphism(s) of $\ppp$-algebras, we hereby answer an open problem by Markl \cite[Problem 9]{markl2002} in the case of nonsymmetric operads. Indeed, the main result of this section is that we establish a minimal model $\Mor(\ppp)_\infty$ of the box operad $\Mor(\ppp)$ as the cobar construction on a box cooperad $\Mor(\ppp)^{\antishriek}_{p \leq 1}$ (Theorem \ref{theoremminimalmodelmorP}) and show that it encodes $\infty$-morphisms of $\ppp_\infty$-algebras (Corollary \ref{corinfinitymorphism}). In contrast, in \cite{markl2002} Markl obtains the minimal model for the coloured operad encoding a morphism of $\ppp$-algebras by ad hoc techniques.

In \S \ref{subsection:MorP}, we define the box operad $\Mor(\ppp)$ and compute a $k$-module basis. In \S \ref{subsection:morkoszuldual} we compute its Koszul dual box cooperad $\Mor(\ppp)^{\antishriek}$ explicitly. This allows us to compute in \S \ref{parparnotkoszulmor} its Koszul complex $\Mor(\ppp)^{\antishriek} \smsquaresub{\ka} \Mor(\ppp)$ via a filtration showing that it is not acyclic (Theorem \ref{propMorisnotKoszul}). As a result, $\Mor(\ppp)$ is not Koszul in our sense (Definition \ref{defkoszul}).

%

In \S \ref{subsection:minimalmodelmorP}, we rectify the situation by realizing the minimal model $\Mor(\ppp)_\infty$ as the cobar construction on a box subcooperad $\Mor(\ppp)^{\antishriek}_{p\leq 1}$ of $\Mor(\ppp)^{\antishriek}$ (Theorem \ref{theoremminimalmodelmorP}). This is based on the following observation: the bottom box of the non-trivial classes of the homology of the Koszul complex of $\Mor(\ppp)$ are decorated by elements of arity $(p,q,p)$ for $p>1$. Luckily, the elements of arity $(0,q,0)$ and $(1,q,1)$ form our new box subcooperad $\Mor(\ppp)^{\antishriek}_{p\leq 1}$. Due to Koszul duality for box operads, it suffices to show that the twisted complex associated to the restricted Koszul morphism $\ka_{p\leq 1}: \Mor(\ppp)^{\antishriek}_{p\leq 1} \longrightarrow \Mor(\ppp)$ is acyclic (Proposition \ref{twistedcomplexmor}).
We further compute that $\Mor(\ppp)_\infty$ indeed encodes a (collection of) $\infty$-morphism(s) between $\ppp_\infty$-algebras (Corollary \ref{corinfinitymorphism}). 

In this subsection, let $\ppp$ be a nonsymmetric Koszul operad with quadratic presentation $\ppp =\faktor{\TOp(E)}{(R)}$ for some $\N$-module of generators $E$ and relations $R \subseteq \TOp(E)$. 

\subsection{The box operad $\Mor(\ppp)$}\label{subsection:MorP}
We identify $\ppp$ with its associated box operad $\Thin_!(\ppp)$ concentrated in thin arities, i.e. $\ppp(0,q,0)= \ppp(q)$ and $0$ elsewhere.
\begin{mydef}\label{Mor}
Let $\Mor(\ppp)$ be the box operad generated by $E$ and the element
$$  f \in \Mor(\ppp)(1,1,1) $$
 satisfying the relations $R$ and for every $e\in E(q)$ the relation
\begin{equation}\label{morPrel}
\quad \scalebox{0.85}{$\tikzfig{morP_funct1}$} \quad  =  \quad \scalebox{0.85}{$\tikzfig{morP_funct2}$}\quad.
\end{equation}
\end{mydef}
Observe that $\ppp$ has a thin-quadratic presentation with generators $E \oplus k f$ and relations $R$ and \eqref{morPrel}. In particular, the thin part of $\Mor(\ppp)$ corresponds to $\ppp$, i.e.  $\Mor(\ppp)(0,q,0) \cong \ppp(q)$.

The following shows that $\Mor(\ppp)$ is a box operadic version of the coloured operad $\PMor$ from \cite[Ex. 1]{markl2002}. Assume that $\uuu$ has finite factorisation (see $\S \ref{parparalgebras}$). 
\begin{prop}\label{proplaxalgebra}
A dg $\Mor(\ppp)$-algebra (resp. category) $\A$ over $\uuu$ corresponds to a collection of dg $\ppp$-algebras (resp. categories) $(\A(U))_{U\in \Ob(\uuu)}$ and a collection of morphisms of dg $\ppp$-algebras (resp. functors of dg $\ppp$-categories) $(\A^u= f^u: \A(U) \longrightarrow \A(V))_{u\in \uuu(U,V)}$.
\end{prop}
\begin{proof}
This is immediate from Definition \ref{defalgebra} and Definition \ref{Mor}.
\end{proof}
\begin{opm}
Again note that the finiteness condition on $\uuu$ can be omitted by working with coloured box operads.
\end{opm}
Let us compute the box operad $\Mor(\ppp)$ in every arity.
\begin{vb}\label{exMorPelement}
For $p\geq 0$ and $x \in \ppp(q)$, $\Mor(\ppp)(p,q,p)$ contains the following element
$$x_{p} := \quad \scalebox{0.85}{$\tikzfig{morphPelement}$} \quad $$
In particular, $\eta_p = f^p$ for $\eta \in \ppp(1)$ the unit.
\end{vb}

\begin{prop}\label{morbasis}
For $p,q,r \geq 0$, we have
$$ \Mor(\ppp)(p,q,r) \cong \begin{cases} \ppp(q) & \text{ if } p=r \\ 0 & \text{ otherwise } \end{cases} $$
In particular, $\Mor(\ppp)$ is inclined.
\end{prop}
\begin{proof}
It is clear from the generators and their relations, that $\Mor(\ppp)(p,q,r)=0$ for $p\neq r$. Let $x \in \Mor(p,q,p)$, then we show that there exists an element $x' \in \ppp(q)$ such that $x= x'_p$ (Example \ref{exMorPelement}). Due to relation \eqref{morPrel}, $x$ can be computed as
$$\scalebox{0.85}{$\tikzfig{morPoneDim}$}$$
for $x_1,\ldots,x_q \in \Mor(p,1,p)$ and $x'\in \ppp(q)$. Moreover, every $x_i$ is solely generated by instances of $f$ and thus $x_i = f^p$. 
\end{proof}

\begin{vb}
The box operad $\Mor(\Assoc)$ is generated by two elements $m$ and $f$ satisfying the associativity relation and \eqref{morPrel}. In particular, for every $p\geq 0$ and $q\geq 1$, the $k$-module $\Mor(\Assoc)(p,q,p)= \Assoc(q)$ is one dimensional.
\end{vb}

\subsection{The box cooperad $\Mor(\ppp)^{\antishriek}$} \label{subsection:morkoszuldual}

In this subsection, we realize the Koszul dual box cooperad $\Mor(\ppp)^{\antishriek}$, defined using an universal property, as an explicit box subcooperad $\Mor(\ppp^{\antishriek})$ of $\Tboxcoop(sE\oplus k f^c)$.
\begin{mydef}
Let $\Mor(\ppp)^{\antishriek}$ be the Koszul dual box cooperad (Definition \ref{defkoszuldualboxcooperad}) of $\Mor(\ppp)$, that is, it is cogenerated by elements 
$$ se^c \in \Mor(\ppp)^{\antishriek}(0,q,0) \text{ for } se \in sE(q) \text{ and } f^c \in \Mor(\ppp)^{\antishriek}(1,1,1)$$
respectively placed in degree $|e|+1$ and $0$, with corelations given by $s^2R$ and 
\begin{equation} \label{corelationMorP}
\quad \scalebox{1}{$\tikzfig{morP_funct1_co}$} \quad + \quad \scalebox{1}{$\tikzfig{morP_funct2_co}$}\;.
\end{equation}
\end{mydef}
\begin{opm}
Observe that corelation \eqref{corelationMorP} is the thin suspension of \eqref{morPrel} and it has the peculiar sign change remarked in Remark \ref{rmkadaptedsign}.
\end{opm}
Observe the following facts
\begin{itemize}
\item $\Mor(\ppp)^{\antishriek}$ is a box subcooperad of $\Tboxcoop(sE \oplus k f^c)$ and $\ppp^{\antishriek}$ is a subcooperad of $\TcOp(sE)$.
\item $\Tboxcoop(sE \oplus k f^c)$ is concentrated in arities $(p,q,p)$ for $p,q\geq 0$.
\end{itemize}
For $p,q \geq 0$, consider the following map of $k$-modules
$$\psi_{p,q}: \Tboxcoop(sE \oplus k f^c)(p,q,p) \longrightarrow \TcOp(sE)(q)$$
sending the labelled stacking $(S;y_1 \otimes \ldots \otimes y_n)$ to the labelled tree $(T; y_{i_1} \otimes \ldots \otimes y_{i_k})$ obtained by deleting all the non-thin boxes and their labels. Remark that in case $S$ consists of only non-thin boxes, we obtain the empty tree which represents the unit $\eta^c \in \TcOp(sE)$.
\begin{vb}
The following labelled stacking is send under $\psi_{2,3}$ to the following labelled tree (drawn using thin boxes)
$$\psi_{2,3}\left( \; \scalebox{0.85}{$\tikzfig{examplePhiMorP_1}$} \; \right) = \; \scalebox{0.85}{$\tikzfig{examplePhiMorP_2}$} \; .$$
\end{vb}
Let us now use these maps $\psi = (\psi_{p,q})_{p,q \geq 0}$ to construct an explicit box subcooperad of $\Tboxcoop(sE \oplus k f^c)$. The idea goes as follows: as every $y \in \ppp^{\antishriek}$ is a summation of trees labelled by generators in $sE$, we construct a new element $y_p$ as a summation of all the stackings obtained by adding instances of $f^c$ to the trees appearing in $y$. 

We obtain the following definition, using the desuspended sign $(-1)^{S^d}$ associated to a stacking $S$ from \cite[Construction A.1]{dinhvanhermanslowen2023}. It is characterized uniquely by the properties 
$$(-1)^{C^{p_0,\ldots,p_n;p,r}_{q_1,\ldots,q_n}} = (-1)^{(p_0+p_n)r + \sum_{i=1}^n (q_i-1)(n-i+p-r + p_0-p_{i-1})},(-1)^{(S\circ_i S')^{\dd}} = (-1)^{S^{\dd}+S^{'\dd}} \text{ and } (-1)^{(S^{\dd})^\si} = (-1)^{S^{\dd}+ \si}$$
 for a permutation $\si$. 
\begin{mydef}
For $y = \sum_{i} (-1)^\eps(T_i; x^i_1 \otimes \ldots \otimes x^i_n) \in \ppp^{\antishriek}(q)$ and $p\geq 0$, define the element
$$y_p := \sum_{(S; w_1 \otimes \ldots \otimes w_m) \in \psi_{p,q}^{-1}(T_i; x^i_1 \otimes \ldots \otimes x^i_n)} (-1)^{\eps+S^d} (S; w_1 \otimes \ldots \otimes w_m) \in \Tboxcoop(sE \oplus k f^c)$$
which has degree $|y_p|=|y|-1$ for $p>0$ and $|y_0|=|y|$. Define the graded $\N^3$-module
$$\Mor(P^{\antishriek})(p,q,p) := \{ y_p \; | \; y \in \ppp^{\antishriek}, p \geq 0 \}$$
\end{mydef}
\begin{opm}
For every $p,q\geq 0$, we have an isomorphism of graded $k$-modules $\Mor(\ppp^{\antishriek})(p,q,p) \cong \ppp^{\antishriek}(q)$.
\end{opm}

\begin{vb}\label{exMorPdual}
We have that $\eta^c_1 = f^c$ for the unit $\eta^c \in \ppp^{\antishriek}(1)$, and $se^c_1= \eqref{corelationMorP}$ the corresponding corelation from $\Mor(\ppp)^{\antishriek}$.
\end{vb}
\begin{vb}
For $\ppp= \Assoc$, consider the associativity corelation
$$ m^c_3 = \; \scalebox{0.75}{$\tikzfig{mc3_1}$} \; - \; \scalebox{0.75}{$\tikzfig{mc3_2}$} \; \in \Assoc^{\antishriek}(3)$$
where $m^c_2$ is the cogenerator of $\Assoc^{\antishriek}$, then $(m^c_3)_2$ is the summation of stackings
\begin{align*}
(m^c_3)_2 &= \; \scalebox{0.75}{$\tikzfig{mc32_1}$} \; - \; \scalebox{0.75}{$\tikzfig{mc32_2}$} \; + \; \scalebox{0.75}{$\tikzfig{mc32_3}$} \; - \; \scalebox{0.75}{$\tikzfig{mc32_4}$} \; + \; \scalebox{0.75}{$\tikzfig{mc32_5}$} \; - \; \scalebox{0.75}{$\tikzfig{mc32_6}$} \; + \; \scalebox{0.75}{$\tikzfig{mc32_7}$} \; - \; \scalebox{0.75}{$\tikzfig{mc32_8}$} \; \\
&+  \; \scalebox{0.75}{$\tikzfig{mc32_9}$} \; - \; \scalebox{0.75}{$\tikzfig{mc32_10}$} \;+  \; \scalebox{0.75}{$\tikzfig{mc32_11}$} \;-  \; \scalebox{0.75}{$\tikzfig{mc32_12}$} \;
\end{align*} 
\end{vb}

Remember that for (box) cooperadic structures we use the Sweedler notation 
$$\De(y) =  (y^{(0)} ; y^{(1)} \otimes \ldots \otimes y^{(n)})$$
which is shorthand for a summation of terms of the form
$$(y^0; y^1 \otimes \ldots \otimes y^n)$$
where $n$ depends on the arity of the bottom element $y^0$. Observe also that each term can have a different sign which we keep implicit.

\begin{lemma}
$\Mor(\ppp^{\antishriek})$ is a box subcooperad of $\Tboxcoop(sE \oplus f^c)$. In particular, for $y \in \ppp^{\antishriek}$ and $p\geq 0$, we have
\begin{equation} \label{decompMorPc}
\De(y_p) = \sum_{\substack{ p= p_0+p_1 }}  (-1)^{\sum_{i=1}^n (ar(y^{(i)})-1)(n-i)} \quad\scalebox{0.85}{$\tikzfig{decompmorP}$} 
\end{equation}
where $\Delta(y)= (y^{(0)} ; y^{(1)} \otimes \ldots \otimes y^{(n)})$ and $ar(y^{(i)})$ denotes its arity.
\end{lemma}
\begin{proof}
Without loss of generality, we can assume $y$ to be a single labelled tree $(T; se^c_1 \otimes \ldots \otimes se^c_k) \in \ppp^{\antishriek}\subseteq \TcOp(sE)$. Then each $y^{(i)}$ represents a subtree thereof such that grafting the trees $y^{(1)}, \ldots, y^{(n)}$ back on $y^{(0)}$ we obtain $y$ again.

The left hand side of the equation \eqref{decompMorPc} is indexed by the set $\psi^{-1}(p,q)(y)$ consisting of all labelled stackings $z=(S;w_1 \otimes \ldots \otimes w_m)$ obtained from the labelled tree $y$ by adding all possible instances of $f^c$ restricted by the arity $(p,q,p)$. Moreover, the decomposition of $z$ is simply the sum over all possible labelled substackings of $z$.

The right hand side of the equation \eqref{decompMorPc} is indexed by the partition $p=p_0+p_1$ and elements $(z^0;z^1 \otimes \ldots \otimes z^n)$ where every labelled stacking $z^i$ is obtained from the labelled tree represented by $y^{(i)}$ by adding instances of $f^c$, restricted by either $p_0$ or $p_1$.

It is thus clear that each term appears exactly once on each side of the equation. Indeed, there exists only an apparent ambiguity for boxes lablled by $f^c$ in $z$ as we have the choice whether it is part of a bottom stacking $z^0$ or a stacking on the top level. This choice is resolved by the choice of partition $p= p_0+p_1$.  

Disregarding the signs, we thus have shown
\begin{align*}
&\sum_{(S;w_1 \otimes \ldots \otimes w_m) \in \psi^{-1}_{p,q}(y)} (-1)^{S^d}\De(S;w_1 \otimes \ldots \otimes w_m) \\
&=  \sum_{ \substack{ p = p_0 + p_1 \\ (S^i; w^i_1 \otimes \ldots \otimes w^i_{m_i}) \in \psi^{-1}_{p_1,q_i}(y^{(i)}) \\
(S^0; w^0_1 \otimes \ldots \otimes w^0_{m_0}) \in \psi^{-1}_{p_0,n}(y^{(0)}) } } (-1)^{S^d_0 + \sum_{i=1}^n (ar(y^{(i)})-1)(n-i)+ S^d_i} \quad \scalebox{0.85}{$\tikzfig{decompmorP_z}$}
\end{align*}
from which follows the required equation \eqref{decompMorPc}. Now let us compute the signs: we have the following composition of stackings
$$S = C^{p_0,\ldots,p_0;p_1,p_1}_{q_1,\ldots,q_n} \circ (S_0,\ldots, S_n)$$
where $q_i = ar(S_i)$, and thus
$$(-1)^{S^d} = (-1)^{ \left( C^{p_0,\ldots,p_0;p_1,p_1}_{q_1,\ldots,q_n} \right)^d + \sum_{i=0}^n S_i^d  + \Delta} $$
where $(-1)^{\De}$ is the permutation sign appearing in the decomposition $\De$. The final sign then follows from 
$(-1)^{ \left( C^{p_0,\ldots,p_0;p_1,p_1}_{q_1,\ldots,q_n} \right)^d } = (-1)^{\sum_{i=1}^n (q_i-1)(n-i)}$.
\end{proof}

\begin{prop}\label{propmorPkoszuldual}
We have an isomorphism of graded box cooperads
$$\Mor(\ppp)^{\antishriek} \cong \Mor(\ppp^{\antishriek}).$$
\end{prop}
\begin{proof}
Example \ref{exMorPdual} and the fact that the thin part of $\Mor(\ppp^{\antishriek})$ is isomorphic to $\ppp^{\antishriek}$ shows that $\Mor(\ppp^{\antishriek})$ lies in the kernel of the projection  
$$\Tboxcoop(sE \oplus k f^c) \overset{\pi}{\twoheadrightarrow} \faktor{\Tboxcoop(sE \oplus k f^c)^{(2)}}{\left( R, \eqref{corelationMorP} \right)}.$$
Consider now a box subcooperad $\ccc \subseteq \Tboxcoop(sE \oplus k f^c)$ also lying in the kernel of $\pi$. First we observe that the thin part of $\ccc$, that is, $(\ccc(0,q,0))_{q\geq 0}$, thus lies in the kernel of the projection
$$ \TcOp(sE) \overset{\pi'}{\twoheadrightarrow} \faktor{\TcOp(sE)^{(2)}}{(R)}$$
and thus it is a subcooperad of $\ppp^{\antishriek}$. 

We now show that $\ccc \subseteq \Mor(\ppp^{\antishriek})$ for the non-thin arities using the corelators \eqref{corelationMorP}. For $z\in \ccc$ non-thin, we have that $z$ is a summation of labelled stackings $z'= (S;w_1 \otimes \ldots \otimes w_m)$. As $\ccc \subseteq ker(\pi)$, every thin-quadratic substacking of $z'$ is part of a corelation in $R$ or \eqref{corelationMorP}. Due to the latter, the labelled stacking obtained from $z'$ by commuting a cogenerator $se^c$ with instances of $f^c$ is also part of the summation $z$ with an appropriate sign. For example, if $z'=\;\scalebox{0.85}{$\tikzfig{morP_chain_1}$} \;$, then so are $\scalebox{0.85}{$\tikzfig{morP_chain_2}$} \; \text{ and } \; \scalebox{0.85}{$\tikzfig{morP_chain_3}$} $. Hence, every element of $\psi^{-1}_{p,q}\psi_{p,q}(z')$ is a term in $z$. As a result, $z'$ is part of an element $y= \sum_i (-1)^{\eps}(T^i; se^c_i \otimes \ldots se) \in \ppp^{\antishriek}$ such that every term of $y_p$ is a term in $z$. As a result, $z= y_p$ or a finite sum of such elements, whence $\ccc \subseteq \Mor(\ppp^{\antishriek})$.

%
%
%
\end{proof}

\begin{vb}
Similar to $\Assoc$, we thus obtain that the Koszul dual box cooperad $\Mor(\Assoc)^{\antishriek}$ is the linear dual of $\Mor(\Assoc)$. 
\end{vb}

\subsection{$\Mor(\ppp)$ is not Koszul} \label{parparnotkoszulmor}

We compute the Koszul complex $\Mor(\ppp)^{\antishriek}\smsquaresub{\ka}\Mor(\ppp)$ associated to the thin quadratic box operad $\Mor(\ppp)$. The twisting morphism $\ka: \sthin\inv \Mor(\ppp)^{\antishriek} \longrightarrow \Mor(\ppp)$ sends generators $s^{-1}sE\cong E$ to $E$ and $f^c$ to $f$ and other elements to zero.
 The twisted complex $\Mor(\ppp)^{\antishriek} \smsquaresub{\ka} \Mor(\ppp)$ is generated as $k$-module by the elements
\begin{equation} \label{basiselement}
 \scalebox{0.85}{$\tikzfig{kosmorP_element}$} 
 \end{equation}
for $p_0,p_1 \geq 0$ and $y\in \ppp^{\antishriek}$ and $x^1,\ldots,x^n \in \ppp$. We call them \emph{$\Mor(\ppp)$-basis elements}. 

We describe the differential $d_\ka = d_\ka^r$ on a basis element as above. For $p_0\neq 1$, we have
\begin{equation} \label{diffKos_1_mor}
d_\ka\left( \quad \scalebox{0.85}{$\tikzfig{kosmorP_element}$}  \quad \right) = \sum_{i=1}^{n-1} (-1)^{|y^{(0)}|+q-i+\delta_{p_00}} \quad \scalebox{0.85}{$\tikzfig{d_ka_decompmorP}$} .
\end{equation}
where $\delta_{p_00}$ is the Kronecker delta. For $p_0=1$, we have 
\begin{align}
d_\ka\left( \; \scalebox{0.85}{$\tikzfig{kosmorP_element_p0is1}$}  \; \right) &= \sum_{i=1}^{n-1} (-1)^{|y^{(0)}|+q-i} \quad \scalebox{0.85}{$\tikzfig{d_ka_decompmorP_p0is1}$}  \label{diffKos_2_mor}\\
&\quad -  \quad \scalebox{0.85}{$\tikzfig{d_ka_decompmorP_2}$}  \notag
\end{align}
%
%
%
%

\begin{prop}\label{propMorisnotKoszul}
The box operad $\Mor(\ppp)$ is not Koszul. In particular, the homology of its Koszul complex is given by
$$H(\Mor(\ppp)^{\antishriek}\smsquaresub{\ka}\Mor(\ppp)) \cong k \; \scalebox{0.85}{$\tikzfig{unitKosMorP}$} \; \oplus \bigoplus_{\substack{p_0 \geq  2 \\ p_1 \geq 0}} k \; \scalebox{0.85}{$\tikzfig{koscomplex_morP_homology}$} $$
\end{prop}
\begin{proof}
We compute the homology of the Koszul complex via the following converging descending filtration
$$
F_t(\Mor(\ppp)^{\antishriek}\smsquaresub{\ka}\Mor(\ppp)) := \bigoplus_{\substack{p_1 \geq t, p_0 \geq 0 \\ x^1,\ldots,x^n \in \ppp, y \in \ppp^{\antishriek} }} k  \; \scalebox{0.85}{$\tikzfig{kosmorP_element}$} \; .
$$ 
Its zeroth page
$$E_{st}^0(\Mor(\ppp)^{\antishriek}\smsquaresub{\ka}\Mor(\ppp)) = F_s(\Mor(\ppp)^{\antishriek}\smsquaresub{\ka}\Mor(\ppp))_{s+t} / F_{s+1}(\Mor(\ppp)^{\antishriek}\smsquaresub{\ka}\Mor(\ppp))_{s+t}$$
consists of those elements of degree $s+t$ with top level horizontal inputs equal to $s$. Hence, we have
$$E_{st}^0(\Mor(\ppp)^{\antishriek}\smsquaresub{\ka}\Mor(\ppp)) \cong \bigoplus_{\substack{p_0 \geq 0, |y|=s+t-\delta_{p_00} \\ x^1,\ldots,x^n \in \ppp, y \in \ppp^{\antishriek} } } k  \; \scalebox{0.85}{$\tikzfig{kosmorP_element_E0}$}$$
with differential $d^0$ given by the original differential $d_{\ka}$ with the last term in \eqref{diffKos_2_mor} omitted. Upon inspection of \eqref{diffKos_1_mor} and \eqref{diffKos_2_mor}, we observe that
$$E_{st}^0(\Mor(\ppp)^{\antishriek}\smsquaresub{\ka}\Mor(\ppp)) \cong \bigoplus_{p_0 \geq 0} s^{s+1-\delta_{p_0 0}}(\ppp^{\antishriek} \circ_{\ka'} \ppp)$$
as chain complexes where $\ppp^{\antishriek} \circ_{\ka'} \ppp$ is the Koszul complex associated to the operad $\ppp$. As $\ppp$ is Koszul, we have that its Koszul complex is concentrated in arity $1$ and degree $-1$. As a result, we can compute the first page: $E^1_{st}$ vanishes except for $t=-s-\delta_{p_00}$, in which case we obtain
$$E^1_{st}(\Mor(\ppp)^{\antishriek}\smsquaresub{\ka}\Mor(\ppp))= H_t(E^0_{s\bullet}(\Mor(\ppp)^{\antishriek}\smsquaresub{\ka}\Mor(\ppp))= \bigoplus_{p_0\geq 0} k \; \scalebox{0.85}{$\tikzfig{koscomplex_morP_E1}$}$$
The differential $d^1$ vanishes, except for
$$d^1\left(\; \scalebox{0.85}{$\tikzfig{koscomplex_morP_E1_1}$} \;\right) =- \; \scalebox{0.85}{$\tikzfig{koscomplex_morP_E1_2}$}$$
Hence, the spectral sequence converges at the second page, computing the homology of $ \Mor(\ppp)^{\antishriek}\smsquaresub{\ka}\Mor(\ppp) $ providing the required result.
\end{proof}
\begin{opm}
Observe that, as the homology of the Koszul complex of $\ppp$ is trivial, the homology of the Koszul complex of $\Mor(\ppp)$ solely depends on the added element $f$.
\end{opm}

\begin{cor}
The morphism of dg box operads
$$f_{\ka}:\Om \Mor(\ppp)^{\antishriek} \twoheadrightarrow \Mor(\ppp) : \sthin^{-1} y_{p} \longmapsto \begin{cases} x & \text{ if } p=0 \\
f & \text{ if } y=\eta^c \text{ and } p=1 \\
0 & \text{ otherwise } \end{cases}$$
is not a quasi-isomorphism.
\end{cor}
\begin{opm}
The above result is already clear upon inspection of the zeroth homology group: for instance for $\ppp= \Assoc$, we have
$$H_0( \Om \Mor(\Assoc)^{\antishriek}(2,1,2))  = k \quad \scalebox{0.85}{$\tikzfig{x11_mor}$} \quad \oplus k\quad \scalebox{0.85}{$\tikzfig{x1_x1_mor}$} \quad  \text{, while } \Mor(\ppp)(2,1,1) = k \quad \tikzfig{m11_mor} \quad.$$
\end{opm}

\subsection{The minimal model $\MorPinf$}
\label{subsection:minimalmodelmorP}
In this subsection, we provide a box operadic answer to Markl's open question \cite[Problem 9]{markl2002} for nonsymmetric operads.

Observe that the decomposition $\De$ of $\Mor(\ppp^{\antishriek})$ (see \eqref{decompMorPc}) solely involves lower arities. We thus have the following sequence of box subcooperads. 
\begin{mydef}
 
\begin{itemize}
\item For $t\geq 0$, let $\Mor(\ppp)_{p\leq t}^{\antishriek}$ be the wdg box subcooperad of $\Mor(\ppp)^{\antishriek}$ spanned by the elements $(y_p)_{p \in \ppp^{\antishriek},p\leq t}$, i.e.
$$\Mor(\ppp)_{p\leq t}^{\antishriek}(p,q,r) = \begin{cases} \Mor(\ppp^{\antishriek})(p,q,p)& \text{if } p=r \leq t \\
0 & \text{otherwise} \end{cases}$$
\item Let $\MorPinf := \Om \Mor(\ppp)_{p\leq1}^{\antishriek}$ be the cobar construction on $\Mor(\ppp)_{p\leq1}^{\antishriek}$.
\end{itemize}
\end{mydef}

The inclusion $\Mor(\ppp)_{p\leq1}^{\antishriek} \sub \Mor(\ppp)^{\antishriek}$ of box cooperads corresponds to an inclusion of dg box operads 
$i: \MorPinf  \hookrightarrow \Om \Mor(\ppp)^{\antishriek}$.
 Via the Rosetta Stone (Theorem \ref{thmrosetta}) the morphism of dg operads $f_{\ka_{p \leq 1}}:=f_{\ka}i: \MorPinf \longrightarrow \Mor(\ppp)$ corresponds to the twisting morphism $\ka_{p\leq1}: \sthin^{-1}\Mor(\ppp)^{\antishriek}_{p \leq 1} \longrightarrow \Mor(\ppp)$ which is the restriction of $\ka$ to $\Mor(\ppp)^{\antishriek}_{p \leq 1}$. 
 
\begin{prop}\label{twistedcomplexmor}
The twisted complex $\Mor(\ppp)_{p\leq1}^{\antishriek} \smsquaresub{\ka_{p\leq 1}}\Mor(\ppp)$ is an acyclic subcomplex of the Koszul complex $\Mor(\ppp)^{\antishriek}\smsquaresub{\ka}\Mor(\ppp)$. 
\end{prop}
\begin{proof}
This follows from the fact that the proof of Proposition \ref{propMorisnotKoszul} still holds mutatis mutandis for $\Mor(\ppp)^{\antishriek}_{p\leq 1}$ instead of $\Mor(\ppp)^{\antishriek}$.
\end{proof}

\begin{theorem}\label{theoremminimalmodelmorP}
The dg box operad $\MorPinf$ is the minimal model of $\Mor(\ppp)$ via $f_{\ka_{p\leq 1}}$. In particular, $\MorPinf$ has as graded $\N^3$-module of generators
\begin{align*}
s^{-1}\ppp^{\antishriek}(q) \subseteq \MorPinf(0,q,0) \\
\ppp^{\antishriek}(q) \subseteq  \MorPinf(1,q,1) 
\end{align*}
for $q\geq 0$. We write these generators respectively as $s^{-1}y_0$ and $y_1$ for any $y\in \ppp^{\antishriek}$. Then, the differential for $y \in \ppp^{\antishriek}$ can be written as
\begin{align*}
d(s^{-1}y_0) &= d_{\ppp_{\infty}}(s^{-1}y)= (-1)^{|y^{(0)}|+1} \;  \scalebox{0.85}{$\tikzfig{Pinf_diff}$} \\
d(y_1) &=  (-1)^{|y^{(0)}|} \;  \scalebox{0.85}{$\tikzfig{Pinf_diff_1}$}\quad  - \quad \scalebox{0.85}{$\tikzfig{Pinf_diff_2}$} 
\end{align*} 
where $\De_{(1)}(y)= y^{(0)} \otimes y^{(1)}$ is the infinitesimal decomposition and $\De(y) = (y^{(0)}; y^{(1)} \otimes \ldots \otimes y^{(n)})$ the full decomposition in $\ppp^{\antishriek}$. 
\end{theorem}
\begin{opm}
 Observe that the projection $\Mor(\ppp)^{\antishriek} \twoheadrightarrow \Mor(\ppp)^{\antishriek}_{p\leq 1}$ is in general not a morphism of box cooperads (look for $\ppp= \Assoc$ at $(m^c_{q})_2$ for example), yet the induced morphism $\Om \Mor(\ppp)^{\antishriek} \twoheadrightarrow \MorPinf$ is a morphism of dg box operads.
 \end{opm}
 
 \begin{cor}\label{corinfinitymorphism}
 A dg $\Mor(\ppp)_\infty$-algebra (resp. category) $\A$ over $\uuu$ corresponds to a collection of $\ppp_\infty$-algebras (resp. categories) $(\A(U))_{U\in \Ob(\uuu)}$ and a collection of $\infty$-morphisms (resp. functors) of $\ppp_\infty$-algebras (resp. categories) $(\A^u= f^u: \A(U) \longrightarrow \A(V))_{u\in \uuu(U,V)}$.
 \end{cor}
 \begin{proof}
 Immediate from Theorem \ref{theoremminimalmodelmorP} and the definition of $\infty$-morphism \cite[Thm. 10.2.6]{lodayvallette}.
 \end{proof}
 \begin{opm}
 The dg box operad $\Mor(\ppp)_\infty$ is the box operadic version of the minimal model of the coloured operad $\PMor$  \cite[Thm. 7]{markl2002}.
 \end{opm}
 
 \begin{vb}
 The minimal model $\Mor(\Assoc)_{\infty}$ of $\Mor(\Assoc)$ is generated by the elements 
\begin{align*} 
 \scalebox{0.85}{$\tikzfig{m0q}$} \;\in \Mor(\Assoc)_\infty(0,q,0) \text{ of degree } q-2 
 \end{align*}
 for $q\geq 2$, and 
 $$
 \scalebox{0.85}{$\tikzfig{m1q}$} \; \in \Mor(\Assoc)_\infty(1,q,1) \text{ of degree } q-1
$$
for $q \geq 1$, such that 
$$d(\; \scalebox{0.85}{$\tikzfig{m0q}$} \;) = \sum_{\substack{q+1= k+l \\
k,l\geq 2 \\
i=1,\ldots, k}} (-1)^{i+ l(k-i)} \; \scalebox{0.85}{$\tikzfig{dm01q}$} $$
and
$$d(\; \scalebox{0.85}{$\tikzfig{m1q}$} \;) =   \sum_{\substack{q+1= k+l \\
k,l\geq 2 \\
i=1,\ldots, k}} (-1)^{i-1+l(k-i)} \;  \scalebox{0.85}{$\tikzfig{dm1q}$}\quad  - (-1)^{\sum_{j=1}^k (l_j+1)(k-j)}  \sum_{\substack{q= l_1 + \ldots + l_k \\
l_1,\ldots,l_k \geq 1 \\
k\geq 2 }} \; \scalebox{0.85}{$\tikzfig{dm1q_2}$}.$$
Hence, $\Mor(\Assoc)_\infty$ encodes $\Ainf$-morphisms (resp. functors) between $\Ainf$-algebras (resp. categories).
 \end{vb}


\section{Application 2: a minimal model for the box operad encoding lax prestacks} \label{parlax}

We apply Koszul duality to the box operad $\Lax$ encoding lax prestacks. Our main goal in this section is to establish an explicit minimal model $\Laxinf$ for the box operad $\Lax$. Along the way, we compute the Koszul complex $\Lax^{\antishriek} \smsquaresub{\ka} \Lax$ of $\Lax$. As $\Mor(\Assoc)$ is a box suboperad of $\Lax$, this application encompasses and extends Application $1$ for the case $\ppp= \Assoc$. 

First, in $\S \ref{parparlax}$, we define the box operad $\Lax$ which encodes lax prestacks (Proposition \ref{proplaxalgebra}) via three generators $(m,f,c)$ and thin-quadratic relations (Definition \ref{lax}). We calculate its basis as $k$-module (Proposition \ref{laxbasis}) and in particular show that $\Lax$ is inclined. Further, we show in \S \ref{parparcooperad} that its Koszul dual box cooperad $\Lax^{\antishriek}$ is the linear dual of $\Lax$. 

In \S \ref{parparnotkoszul}, we perform a detailed calculation of the Koszul complex $\Lax^{\antishriek} \smsquaresub{\ka} \Lax$, vastly extending a similar computation for its subcomplex $\Mor(\Assoc)^{\antishriek}\smsquaresub{\ka}\Mor(\Assoc)$. Our main result proves it is not acyclic showing $\Lax$ is not Koszul (Theorem \ref{laxnotkoszul}). 
In order to compute the homology, we defined a new filtration on the complex (Lemma \ref{lemfiltration}). We show its first page $E^1$ decomposes as a sum of tensor products of subcomplexes isomorphic to certain arities of the classical Koszul complex of $\Assoc$ (Lemmas \ref{lemmaE1_1} and \ref{lemmaE1_2}). Note that each page $E^k$ has a non-trivial differential, as opposed to the spectral sequence for $\Mor(\Assoc)$ which already collapses at the second page. Finally, we show that there are non-trivial classes in $E^{\infty} = H(\Lax^{\antishriek}\smsquaresub{\ka}\Lax)$. In fact, we show that the bottom box of these non-trivial classes, which is decorated by an element of $\Lax^{\antishriek}$, always has arity $(p,q,r)$ with $r>1$. Hence, we obtain an important acyclic subcomplex $\Lax_{\leq 1}^{\antishriek} \smsquaresub{\ka} \Lax$ of the Koszul complex.

In \S \ref{parparminimal} we exploit this acyclic subcomplex to obtain an explicit minimal model $\Laxinf$ for $\Lax$. Indeed, by endowing the free graded box operad 
$$\Laxinf := \Tboxop(s_{\thin}^{-1}\Lax^{\antishriek}_{\leq r}) \sub \Om \Lax^{\antishriek}$$
with a differential through the projection $p: \Om \Lax^{\antishriek} \twoheadrightarrow \Laxinf$, we obtain a quasi-free dg box operad $\Laxinf$. Importantly, we show that $\Lax^{\antishriek}_{r\leq 1}\smsquaresub{\ka} \Laxinf$ is acyclic (Proposition \ref{laxacyclic}). Although $\Lax^{\antishriek}_{1\leq r}$ lacks a box cooperad structure, we are still able to mimick the inclined fundamental theorem and conclude that the induced morphism $f: \Laxinf \longrightarrow \Lax$ is a quasi-isomorphism, effectively exhibiting $\Laxinf$ as a minimal model for $\Lax$ (Theorem \ref{thmlaxinf}).
Following \cite{markl2004}, $\Laxinf$-algebras are called \emph{lax prestacks up to homotopy}.

\subsection{The box operad $\Lax$} \label{parparlax}

\begin{mydef}\label{lax}
Let $\Lax$ be the box operad generated by elements
$$ m\in \Lax(0,2,0) \qquad f \in \Lax(1,1,1) \qquad c \in \Lax(2,0,1)$$
 satisfying the relations
\begin{enumerate}
\itemsep1em
\item \label{laxrel1} 
$\quad  \scalebox{0.75}{$\tikzfig{lax_assoc1}$} \quad  =  \quad \scalebox{0.75}{$\tikzfig{lax_assoc2}$}\quad,$
\item \label{laxrel2}
$\quad \scalebox{0.75}{$\tikzfig{lax_funct1}$} \quad  =  \quad \scalebox{0.75}{$\tikzfig{lax_funct2}$}\quad,$
\item \label{laxrel3}
$\quad \scalebox{0.75}{$\tikzfig{lax_nat1}$} \quad  =  \quad \scalebox{0.75}{$\tikzfig{lax_nat2}$}\quad,$
\item \label{laxrel4} 
 $\quad \scalebox{0.75}{$\tikzfig{lax_cocycle1}$} \quad  =  \quad \scalebox{0.75}{$\tikzfig{lax_cocycle2}$}\quad,$
\end{enumerate}
\end{mydef}
\begin{opm}
Observe that $\Mor(\Assoc)$ is a box suboperad of $\Lax$ and in particular that $\Lax(0,q,0) \cong \Mor(\Assoc)(0,q,0) \cong \Assoc(q) \cong k$ a one dimensional $k$-module.
\end{opm}

 \emph{A dg lax prestack $\A$ over $\uuu$} is a lax functor over $\uuu$ taking values in the category of dg categories, that is, $\A$ is a lax functor $\uuu \longrightarrow \dgCat(k)$ \cite[Def. 4.8]{dinhvanhermanslowen2023}. Note that, by convention, we omit taking the opposite of $\uuu$ in this paper.
Assume $\uuu$ has finite factorisation (see $\S \ref{parparalgebras}$).
\begin{prop}\label{proplaxalgebra}
A dg $\Lax$-category over $\uuu$ corresponds to a dg lax prestack over $\uuu$.
\end{prop}
\begin{proof}
This is immediate from Definition \ref{defalgebra}, Definition \ref{lax} and \cite[Def. 4.8]{dinhvanhermanslowen2023}.
\end{proof}
\begin{opm}
Again note that the finiteness condition on $\uuu$ can be omitted by working with coloured box operads.
\end{opm}

\subsubsection{The unbiased definition of $\Lax$}

\begin{mydef}\label{defpart}
Let $\Part$ be the monoidal category of partitions, that is,
\begin{itemize}

\item $\Part$ has a monoid of objects $(\N,+,0)$,
\item $\Part(p,r)$ is the set of partitions of $\{1,\ldots,p\} $ in $r$ pairwise-disjoint subsets, i.e. $r$-tuples $(\fromto{t}{r})$ such that $\sum_{i=1}^r t_i = p$ and $t_i >0$. By default, we set $\Part(0,0) := \{ (0) \}$.
\item The composition of $P = (\fromto{t}{r}) \in \Part(p,r)$ and $P' = (\fromto{t'}{s})\in \Part(r,s)$ is given by composing the corresponding partitions
$$P \circ P' = \left( \sum_{i=1}^{t_1'} t_i, \ldots , \sum_{i= \sum_{j<s}t'_j + 1}^{ \sum_{j\leq s} t'_j} t_i \right)$$
\item The monoidal product of $P = (\fromto{t}{r}) \in \Part(p,r)$ and $P' = (\fromto{t'}{s})\in \Part(p',r')$ is given by concatenation.
\end{itemize}  
\end{mydef}
\begin{opm}
Note that $\Part(p,1)$ and $\Part(p,p)$ consist of a single partition, and $\Part(p,r)$ is empty if $p<r$. Note also that $\Part$ is the monoidal category generated by the (nonsymmetric) operad $\Assoc$.
\end{opm}

\begin{mydef}\label{laxstacking}
A \emph{$\Lax$-stacking} is a stacking $S$ consisting of boxes of arity $(0,2,0), (1,1,1)$ and $(2,0,1)$. 
\end{mydef}

\begin{constr}
Let $S$ be a $\Lax$-stacking, then we associate a partition $P_S$ as follows:
\begin{enumerate}
\item if $S$ consists of solely thin stackings, we set $P_S = (0)$,
\item otherwise, consider the decomposition of its horizontal composite graph $\Hh_S= (G_1,\ldots,G_r)$ into its connected components. Observe that each connected component $G_i$ has a single output. Let $t_i$ be the number of inputs of $G_i$, then define a partition $P_S :=(t_1,\ldots,t_r) \in \Part(p,r)$ by counting from bottom to top (see Example \ref{vbmpq}). 
\end{enumerate}
As the relations in $\Lax$ preserve the partition associated to a $\Lax$-stacking, every element $x\in \Lax$ has an associated partition $P_x$. 
\end{constr}

\begin{vb}\label{vbmpq}
Let $P=(t_1,\ldots,t_r) \in \Part(p,r)$ be a partition for $r>0$, then we define the elements
\begin{equation}\label{eqmpq}
m_{P,q} := \quad \scalebox{0.85}{$ \tikzfig{maximalElement}$} \quad, \quad \eps^{p,p} := \quad \tikzfig{eps_p} \quad \text{ and } \quad \eps^{i,p} :=  \quad \tikzfig{eps_i} 
\end{equation}
where $1 \leq i < p$. Note that we set $m_q:= m_{(0),q}$. These elements respectively live in $\Lax(p,q,r), \Lax(p,1,p)$ and $\Lax(p,0,p-1)$ with associated stackings $P, (1,\ldots,1)$ and $(\underbrace{1,\ldots,1}_{(i-1)-\text{times}},2,1\ldots,1)$. 
\end{vb}

\begin{prop}\label{laxbasis}
For $p,q,r \in \N$, we have
$$ \Lax(p,q,r) = \bigoplus_{P \in \Part(p,r)} k m_{P,q} $$
if $p+q-r \geq 1$, and $0$ otherwise.
\end{prop}
\begin{proof}
It suffices to show this for the non-thin part. Let $x \in \Lax(p,q,r)$, then we prove that $x = m_{P_x,q}$.
Due to relation \eqref{laxrel2}, $x$ can be computed in $\Lax$ as 
$$\tikzfig{laxoneDim}$$
for $n= p+q-r$ and $x_1,\ldots,x_n \in \Lax$ consisting of compositions of $f$ and $c$, whence $x_i = \eps^{a_i,b_i}$ for some $1 \leq a_i \leq b_i$. We show that if $a \leq c$ and $a<b$, then  
$$ m \circ ( \eps^{a,b} \otimes \eps^{c,d}) = m \circ (\eps^{c+1, d+1} \otimes \eps^{a,b-1} ).$$
Indeed, we have the following three cases
\begin{alignat}{3}
&\qquad\quad  \underline{a=c} &&\qquad\quad  \underline{a<c <d} && \qquad\quad \underline{c =d} \notag\\
& && && \notag \\
&\tikzfig{coherence6} \quad = \quad \tikzfig{coherence5}\;,\qquad &&\quad \tikzfig{coherence4} \quad = \quad \tikzfig{coherence3}\;, \qquad &&\quad\tikzfig{coherence2} \quad = \quad \tikzfig{coherence1}\;. \label{towers}
\end{alignat}
As a result, we can rewrite 
$$ x= \tikzfig{coherence7}$$
where $p > c_1 > \ldots > c_{p-r} > 0$. Hence, we observe that $t_i-1$ is equal to the number of $c_j=p-j+i+1$. As a result, $x$ is exactly of the form as defined in Example \ref{vbmpq}, thus $x = m_{P_x,q}$.
\end{proof}

\begin{opm}
Observe that $\Lax(p,q,p)= \Mor(\Assoc)(p,q,p)$ where the element $m_{(1,\ldots,1),q}$ with unique partition $(1,\ldots,1)\in \Part(p,p)$ equals $(m_q)_p$ from $\Mor(\Assoc)$.
\end{opm}

\subsection{The box cooperad $\Lax^{\antishriek}$} \label{parparcooperad}

\begin{mydef}
Let $\Lax^{\antishriek}$ be the Koszul dual box cooperad of $\Lax$, that is, it is cogenerated by elements 
$$ m^c \in \Lax^{\antishriek}(0,2,0), f^c \in \Lax^{\antishriek}(1,1,1) \text{ and } c^c\in \Lax^{\antishriek}(2,0,1)$$
respectively of degree $1,0$ and $0$, with corelations given by the thin suspension of the relations of $\Lax$.
\end{mydef}

\subsubsection{The unbiased definition of $\Lax^{\antishriek}$}
For $P\in \Part(p,r)$ and $q\in \N$ such that $p+q-r \geq 1$, let $\Laxboxop(P,q)$ be the set of $\Lax$-stackings $S$ such that $P_S =P$ and the resulting box is of arity $(p,q,r)$. 

Let $(-1)^{S^{\dd}}$ be the desuspended sign associated to a stacking $S$ from \cite[Construction A.1]{dinhvanhermanslowen2023}. It is characterized uniquely by the properties $(-1)^{C^{p_0,\ldots,p_n;p,r}_{q_1,\ldots,q_n}} = (-1)^{(p_0+p_n)r + \sum_{i=1}^n (q_i-1)(n-i+p-r + p_0-p_{i-1})},(-1)^{(S\circ_i S')^{\dd}} = (-1)^{S^{\dd}+S^{'\dd}}$ and $(-1)^{(S^{\dd})^\si} = (-1)^{S^{\dd}+ \si}$ for a permutation $\si$. 

Define the elements
$$m^c_{P,q} := \sum_{S\in \Laxboxop(P,q)} (-1)^{S^{\dd}} \; S \quad \in \Tboxop(m^c,f^c,c^c)(p,q,r)$$
which have degree $q-1 + p-r$.

\begin{vb}\label{exLaxc}
We easily compute that $m_{(0),3}^c, m_{(1),2}^c, m_{(2),1}^c$ and $m_{(3),0}^c$ correspond to the thin suspension of the relations $\eqref{laxrel1}, \eqref{laxrel2}, \eqref{laxrel3}$ and $\eqref{laxrel4}$ respectively. Note that, due to the additional sign from the thin infinitesimal decomposition, this means 
$$m_{(1),2}^c = \quad \scalebox{0.75}{$\tikzfig{lax_funct1}$} \quad + \quad \scalebox{0.75}{$\tikzfig{lax_funct2}$}\;,$$
see also Remark \ref{rmkadaptedsign}.
\end{vb}

\begin{prop}
For $p,q,r\in \N$ such that $p+q-r\geq 1$, holds
$$\Lax^{\antishriek}(p,q,r) = \bigoplus_{P \in \Part(p,r)} m^c_{P,q}$$
and $0$ otherwise. In particular, we have
\begin{equation} \label{decompLaxc}
\De(m_{P,q}^c) = \sum_{ \substack{ \sum_{i=1}^n q_i = q }} \;\sum_{\substack{ P_0 \in \Part(p',r') \\ P_i\in \Part(p_{i-1},p_{i}) \\  P = P_0 \otimes ( P_1 \circ \ldots \circ P_n ) }}  (-1)^{\left( C_{\nth{q}}^{p_0,\ldots,p_n;p',r'} \right)^{\dd}} \quad\scalebox{0.85}{$\tikzfig{decomplax}$} 
\end{equation}
\end{prop}
\begin{proof}
We first show the equality, which proves that $(m_{P,q}^c)_{P,q}$ defines a box subcooperad $\ccc$ of $\Tboxcoop(m^c,f^,c^c)$. 

We compute
\begin{align*}
 \De( m^c_{P,q}) &= \sum_{S \in \Laxboxop(P,q)} \; \sum_{ S= S_0 \circ (S_1,\ldots,S_n)} (-1)^{S^{\dd} + \De } \quad \scalebox{0.85}{$\tikzfig{Lax_koszul_dual_1}$} \\
 &=  \sum_{ \substack{ \sum_{i=1}^n q_i = q }} \;\sum_{\substack{ P_0 \in \Part(p',r') \\ P_i\in \Part(p_{i-1},p_{i}) \\  P = ( P_1 \circ \ldots \circ P_n )\otimes P_0 }} (-1)^{S^{\dd} + \sum_{i=0}^n (S_i)^{\dd}+ \De} \quad \scalebox{0.85}{$\tikzfig{decomplax}$}
\end{align*}
as we have that
$$\Big \{ (P,P_{S_0}, \ldots, P_{S_n}) : S \in \Laxboxop(P,q), S_i \in \Laxboxop, S = S_0 \circ (\nth{S}) \Big \} = \Big \{ (P,P_0,\ldots,P_n) : P = P_0 \otimes (\nth{P})  \Big \}$$
Next, we compute the signs. First, as $S = S_0 \circ (\nth{S})$, we have that 
$$S = (C_{\nth{q}}^{p_0,\ldots,p_n;p',r'} \circ_{n+1} S_0 \circ_n \ldots \circ_1 S_n)^\si$$
for an appropriate permutation $\si$. Note that it is the same permutation used in $\De$ and thus determines the sign $(-1)^{\De}$. Thus, we obtain
$$ (-1)^{S^{\dd}} = (-1)^{ \left( C_{\nth{q}}^{p_0,\ldots,p_n;p',r'} \right)^{\dd}  + \sum_{i=0}^n(S_{i})^{\dd} + \De} $$
This proves the equation.

Now, as shown in Example \ref{exLaxc}, we have that the suboperad $\ccc$ spanned by $m^c_{P,q}$ lies in the kernel of the projection  
$$\Tboxop(m^c,f^c,c^c) \twoheadrightarrow \Tboxop(m^c,f^c,c^c)^{(2)} / \left( (\ref{laxrel1}), (\ref{laxrel2}), (\ref{laxrel3}), (\ref{laxrel4}) \right).$$
 Moreover, using the above equation \eqref{decompLaxc} we easily show in a completely analogous way as Proposition \ref{propmorPkoszuldual} that $\ccc$ is the largest box subcooperad in this kernel and thus coincides with $\Lax^{\antishriek}$.
\end{proof}
\begin{opm}
Note in particular that $\Mor(\Assoc)^{\antishriek}$ is a box subcooperad of $\Lax^{\antishriek}$ and that $\Lax^{\antishriek}(0,q,0) = \Mor(\Assoc)^{\antishriek}(0,q,0) \cong \Assoc^{\antishriek}(q)$. We thus identify $m_q^c = m^c_{(0),q}$.
\end{opm}

\subsection{$\Lax$ is not Koszul} \label{parparnotkoszul}

\subsubsection{The Koszul complex $\Lax \smsquaresub{\ka} \Lax^{\antishriek}$ }
The twisting morphism $\ka: \sthin\inv \Lax^{\antishriek} \longrightarrow \Lax$ sends $(s\inv m^c,f^c,c^c)$ to $(m,f,c)$ and sends other elements to zero.
 The twisted complex $\Lax \smsquaresub{\ka} \Lax^{\antishriek}$ is generated as $k$-module by the elements
\begin{equation} \label{basiselement}
 \scalebox{0.85}{$\tikzfig{koslax_element}$} 
 \end{equation}
for $\nth{q} \in \N$ and $P_i \in \Part(p_{i-1},p_{i})$ and $P_0 \in \Part(p',r')$. We call them \emph{basis elements}. 

We describe the differential $d_\ka = d_\ka^r$ on a basis element as above. For $r'\neq 1$, we have
\begin{equation} \label{diffKos_1}
d_\ka\left( \quad \scalebox{0.85}{$\tikzfig{koslax_element}$}  \quad \right) = \sum_{i=1}^{n-1} (-1)^{i-1+\Thin(P_0)} \quad \scalebox{0.85}{$\tikzfig{d_ka_decomplax}$} .
\end{equation}
where $(-1)^{ \Thin(P_0)}= -1$ if $P_0= (0)$, and $1$ otherwise. For $r'=1$, we have 
\begin{align}
d_\ka\left( \quad \scalebox{0.85}{$\tikzfig{koslax_element}$}  \quad \right) &= \sum_{i=1}^{n-1} (-1)^{i-1} \quad \scalebox{0.85}{$\tikzfig{d_ka_decomplax}$}  \label{diffKos_2}\\
&\quad +  \sum_{\substack{ \be \in \Ss_{p'-1,n} \\ \underline{a} \in \Path(p') }} (-1)^{\be + \underline{a} +n(p'-1)+p'} \quad \tikzfig{d_ka_decomplax_2} \notag
\end{align}
where we define the auxiliary element $\be(\underline{a},\underline{P},\underline{q})$ as follows. Let us first give an example to sharpen the intuition.
\begin{vb}

\end{vb}
We now define $\be(\underline{a},\underline{P},\underline{q})$.
\begin{itemize}
\item define a set of \emph{paths} as
$$\Path(p') := \{ (\eps^{i_1,p'}, \ldots, \eps^{i_{p'-1},2}) : 1 \leq i_j < p-j \}$$
Note in particular that $i_{p'-1} = 1$ always holds, and thus $\eps^{i_{p'-1},2} = c$. Given such a path $\underline{a}=(\fromto{a}{p'-1})$ we define its sign
$$(-1)^{\underline{a}} = \prod_{i=1}^{p'-1}(-1)^{a_{i}} \text{ where } (-1)^{\eps^{j,t}}= (-1)^{j}.$$
In particular, $(-1)^{c} = -1$.
\item Let $\be \in \Ss_{p'-1,n}$ be a $(p'-1,n)$-shuffle, then given a path $\underline{a} = (\eps^{i_1,p'}, \ldots, \eps^{i_{p'-1},2})\in \Path(p')$, a sequence of natural numbers $\underline{q}=(\fromto{q}{n})$ and a sequence of composable partitions $\underline{P}= (P_1,\ldots,P_n)$, we define a sequence of elements $\be(\underline{a}, \underline{P},\underline{q}) = (m_{Q_1,q'_1},\ldots,m_{Q_{p'-1+n},q'_{p'-1+n}})$ as follows
$$ m_{Q_{\be(j)},q'_{\be(j)}} = \begin{cases} \eps^{i_j,p'-s_j+1+p_s} & \text{ if } 1 \leq j \leq p'-1 \\ m_{ P_j^{t_j},q_j} & \text{ if } p-1 < j \leq p'-1+n \end{cases}$$
where
\begin{align*}
s_j&= |\{ p'-1 < j' \leq p'-1+n \; | \; \be(j') < \be(j) \} |-1,\\
P_j^{t_j} &= \underbrace{(1,\ldots,1)}_{t_j} \otimes P_j\\
t_j &= p - | \{ 1 \leq j' \leq p-1 \;|\; \be(j') < \be(j) \} |
\end{align*}  
The result is a `path' from $p$ horizontal inputs (left) to $r$ horizontal outputs (right) which shuffles the path $\underline{a}$ with the top level elements $m_{P_i,q_i}$.
\item We write $\be(\underline{a},\underline{q})$ for $\be(\underline{a},\underline{P},\underline{q})$ where $\underline{P} = ((0),\ldots,(0))$. Analogously, denote $\be^c(\underline{a},\underline{q})$ by the corresponding tuple where instances of $(m,f,c)$ are replaced by $(m^c,f^c,c^c)$ respectively.
\end{itemize}
We show that the correct sign for this last set of terms is indeed $(-1)^{\be + a+1}$. We have that 
$$\De_{(1)}^{s\inv} ( m^c_{(p'),n} )  = - \sum_{\substack{ \be \in \Ss_{p'-1,n} \\ \underline{a} \in \Path(p') } }(-1)^{S^{\dd}} \quad \scalebox{0.85}{$\tikzfig{betabox}$} $$
 where $S$ denotes the underlying thin infinitesimal stacking.
 \begin{lemma}
 The sign $(-1)^{S^{\dd}}$ equals $(-1)^{\be + \underline{a}+1+n(p'-1)+p'}$.
 \end{lemma} 
\begin{proof} 
It is easy to compute that the desuspended sign of the stacking forming $\eps^{t,t}$ and $\eps^{i,t}$ is respectively $1$ and $(-1)^{i-1}$. Hence, 
$$(-1)^{ S^{\dd}} = (-1)^{(C^{p',\ldots,p',p'-1,\ldots,p'-1,\ldots,1,\ldots,1;0,0}_{1,\ldots,1,0,\ldots,0,\ldots,0,1,\ldots,1})^{\dd}} (-1)^{\sum_{j=1}^{p'-1}(i_j-1)}$$
where the place of the $0$'s is determined by $\be$. We compute the first factor as
$$  (-1)^{(C^{p',\ldots,p',p'-1,\ldots,p'-1,\ldots,1,\ldots,1;0,0}_{1,\ldots,1,0,\ldots,0,\ldots,0,1,\ldots,1})^{\dd}}  = (-1)^{ \sum_{j=1}^{p'-1} (n+p'-1-\be(j) + p'-p_{\be(j)-1})} = (-1)^{ n(p'-1)+\be}$$
as $p_{\be(j)-1} = p'-j+1$ and $(-1)^\be = (-1)^{  \sum_{j=1}^{p'-1}(\be(j)-j)}$. Hence, the result follows.
\end{proof}
Remark the similarities in notation and names with \cite[\S 3.3]{DVL} \cite[\S 3.1]{dinhvanhermanslowen2023}, which inspired this explicit description. 

\subsubsection{The filtration}

\begin{mydef}\label{defstackingnumber}
Let $P=(t_1,\ldots,t_r)\in \Part(p,r)$ and $q\in \N$ such that $p+q-r\geq 1$, then we define the \emph{stacking number of $(P,q)$} as
\begin{equation}\label{eqstackingnumber}
\SN(P,q) := \text{ maximum number of instances of } f \text{ and } c \text{ needed to compute } m_{P,q} \text{ from the generators of } \Lax.
\end{equation}
\end{mydef}
\begin{opm}
The stacking number can also be given by the following closed formula
\begin{equation}\label{formulasn}
\SN(P,q) = pq + \sum_{t_i >1} (t_i-1)( \frac{t_i}{2} + \sum_{\substack{ j<i \\ t_j > 1} } t_j ) + \sum_{t_i = 1} \sum_{j>i}(t_j-1)
\end{equation}
obtained by counting the occurrences of $f$ and $c$ in the stacking from Example \ref{vbmpq}.
\end{opm}

\begin{lemma}\label{lemfiltration}
The subspaces
$$
F_t(\Lax^{\antishriek}\smsquaresub{\ka}\Lax) := k \left\{ \; \tikzfig{koslax_element} \; | \; \sum_{i=1}^n \SN(P_i,q_i) \geq t \right\}
$$
define a convergent decreasing filtration computing the homology of $\Lax^{\antishriek}\smsquaresub{\ka}\Lax$. Moreover, it splits according to arity.
\end{lemma}
\begin{proof}
We verify that the differential $d_\ka$ preserves the filtration. In this respect, we first note that 
$$\SN(m_{P\circ P',q+q'}) \geq \SN(m_{P,q}) + \SN(m_{P,q}) $$
due to the maximality of the stacking number and the fact that 
$$ \tikzfig{XandX'} $$
computes $m_{P\circ P',q+q'}$. Secondly, it is clear from its definition that $\be(\underline{a},\underline{P},\underline{q})$ increases the sum $\sum_{i=1}^n \SN(P_i,q_i)$ by at least $1$. Hence, $d_{\ka}F_t \sub F_t$.

Further, we observe that
$$F_0 \Lax^{\antishriek}\smsquaresub{\ka}\Lax = \Lax^{\antishriek}\smsquaresub{\ka}\Lax \text{ and } \bigcap_{t \geq 0} F_t \Lax^{\antishriek}\smsquaresub{\ka}\Lax = 0$$
as we have that $0 \leq \SN(P,q) \leq pq + (r-1)(p-r) + \frac{(p-r)(p-r+1)}{2}$ for every partition $P \in \Part(p,r)$. Hence, it converges.
\end{proof}

\begin{vb}\label{exadditive}
We easily compute that 
\begin{itemize}
\item $\SN((0),q) = 0$,
\item $\SN((1,\ldots,1,2,1\ldots,1),0) = i$ as it is computed by $\eps^{i,p}$,
\item $\SN((1,\ldots,1),q)= pq$,
\item $\SN((p),0)= \frac{p(p-1)}{2}$. 
\end{itemize}
\end{vb}

\begin{mydef}\label{defstackingdifference}
For $P \in \Part(p,p')$ and $P' \in \Part(p',p'')$, their \emph{stacking difference} is defined as 
$$\SD((P,q),(P',q')) := \SN(P \circ P',q+q') - \SN(P,q) - \SN(P',q').$$
\end{mydef}

\subsubsection{Computing $E^1$}
The zeroth page of the spectral sequence
$$E^0_{st} := F_s(\Lax^{\antishriek}\smsquaresub{\ka}\Lax)_{s+t} / F_{s+1}(\Lax^{\antishriek}\smsquaresub{\ka}\Lax)_{s+t}$$
consists of those elements of degree $s+t$ and stacking number $s$, with differential
\begin{equation} \label{d0}
d^0\left(\; \scalebox{0.85}{$\tikzfig{decomplax_nc}$}  \; \right) = \sum_{\SD((P_i,q_i),(P_{i+1},q_{i+1}))=0}  (-1)^{i-1+ \Thin(P_0)} \; \scalebox{0.85}{$\tikzfig{d_ka_decomplax}$}  \;
\end{equation}
(see \eqref{diffKos_1} and \eqref{diffKos_2}). Note that $E^0$ splits into subcomplexes
$$ E^0  = \overline{E}^0 \oplus \widetilde{E}^0$$
where $\widetilde{E}^0 := k \left\{ \; \tikzfig{mcP}   \;|\;   P \in \Part \right\}$. Moreover, $\widetilde{E}^0$ has a zero differential.
\begin{mydef}
For $P \in \Part(p,r)$ and $q\in \N$ such that $p+q-r \geq 1$, define the subcomplex
$$\overline{E}^0_{s,-}(P,q) := k \left \lbrace \quad \scalebox{0.85}{$\tikzfig{basiselement_bottomthin}$}	\in E^0_{s,-} \quad \Big\vert \quad \begin{aligned} &q= \sum_{i=1}^n q_i, P= P_1 \circ \ldots \circ P_n\\
 &\SD((P_i,q_i),(P_{i+1},q_{i+1}))= 0 \end{aligned} \quad \right \rbrace \sub \overline{E}^0_{s,-}$$ 
\end{mydef}

\begin{lemma}\label{lemmaE1_1}
We have an isomorphism of complexes
$$\overline{E}^0_{s,-} \cong \bigoplus_{ \substack{ P_0 \in \Part(p',r') \\ P_i \in \Part(p_{i-1},p_i)\\ \SD((P_i,q_i),(P_{i+1},q_{i+1}))>0} }  s^{p'-r'+2k+1-\Thin(P_0)}\bigotimes_{i=1}^k \overline{E}^0_{s,-}(P_i,q_i) $$
where $\Thin(P_0)$ equals $1$ if $P_0 = (0)$ and $0$ otherwise.
\end{lemma}
\begin{proof}
Set $A:=p'-r'+2k+1-\Thin(P_0)$, then we construct an explicit isomorphism
$$F: \overline{E}^0_{s,-} \longrightarrow \bigoplus_{ \substack{ P_0 \in \Part(p',r') \\ P_i \in \Part(p_{i-1},p_i)\\ \SD((P_i,q_i),(P_{i+1},q_{i+1}))>0} } s^{A}\bigotimes_{i=1}^k \overline{E}^0_{s,-}(P_i,q_i)$$
as follows: let $x$ be a basis element as in \eqref{basiselement} and let $(i_1,\ldots,i_k)$ be a sequence of indices such that for $i_{j-1} < t < i_{j}$ holds
$$\SD((P_t,q_t),(P_{t+1},q_{t+1})) = 0 \text{ and } \SD((P_{i_j},q_{i_j}), (P_{i_j+1},q_{i_j+1})) > 0$$
where $i_0 =0$ and $i_{k+1} = n$. Define the following basis elements
$$ x_j := \; \tikzfig{ComputeE1_1}\; \in \overline{E}^0_{s,-}(P_{i_{j-1} +1}\circ \ldots \circ P_{i_{j}}, q_{i_{j-1}+1}+ \ldots + q_{i_{j}})$$
and set $F(x) := (-1)^{(p'-r')n} s^{A}( x_1 \otimes \ldots \otimes x_k)$. For a given partition $P_0 \in \Part(p',r')$, it is clear that $F$ is bijective. We verify that $F$ is a chain map. We compute
\begin{align*}
&F d^0 (x) \\
&= \sum_{j=1}^{k+1} \sum_{ i_{j-1}< l < i_{j}} (-1)^{l- 1+\Thin(P_0)} F\left(\quad 
\scalebox{0.85}{$ \tikzfig{Fdka_0_nc}$} \quad \right)\\
&= \sum_{j=1}^{k+1} \sum_{i_{j-1}< l < i_j } (-1)^{l- 1+\Thin(P_0)+(n-1)(p'-r')}s^{A}(
x_1 \otimes \ldots \otimes \; 
\scalebox{0.85}{$ \tikzfig{ComputeE1_2}$} \; \otimes \ldots \otimes x_k) \\
&= \sum_{j=1}^{k+1} (-1)^{i_{j-1}-1+\Thin(P_0)+(n-1)(p'-r')} s^{A} (x_1 \otimes \ldots \otimes d^0(x_j) \otimes \ldots \otimes x_k)\\
&= (-1)^{1-\Thin(P_0)+(n-1)(p'-r')}s^{A} d^0(x_1 \otimes \ldots \otimes x_k) =  d^0 F(x)
\end{align*}
which proves the claim. 
\end{proof}

Let $\Assoc^{\antishriek}\circ_{\ka'} \Assoc$ be the Koszul complex of the operad $\Assoc$ with twisting morphism $\ka':\Assoc^{\antishriek} \longrightarrow \Assoc$. Observe that it corresponds to the subcomplex of $\Lax^{\antishriek} \smsquaresub{\ka} \Lax$ consisting of thin boxes. As $\Assoc$ is Koszul, we have
$$H(\Assoc^{\antishriek}\circ_{\ka'} \Assoc) \cong k \; \tikzfig{unitKosAssoc}$$
concentrated in degree $-1$. 

\begin{lemma}\label{lemmaE1_2}
Let $P\in \Part(p,r)$ and $q\in \N$ such that $p+q-r\geq 1$ and $\SN(P,q)=t$, then we have an isomorphism of complexes
$$\overline{E}^0_{t,-}(P,q) \cong s^t \left(\Assoc^{\antishriek} \circ_{\ka'} \Assoc \right)(p+q-r)$$	
\end{lemma}
\begin{proof}
We construct an explicit isomorphism 
$$F: \overline{E}^0_{t,-}(P,q) \longrightarrow s^t(\Assoc^{\antishriek} \circ_{\ka'} \Assoc)(p+q-r) $$ 
defined as 
$$F\left(\; \tikzfig{basiselement_bottomthin} \; \right) := (-1)^{tn} s^t\; \tikzfig{elementKosAssoc}$$
Upon inspection of the differential $d^0$ \eqref{d0}, $F$ is clearly a chain map. We show that $F$ is bijective by induction on $n\geq 1$. It suffices to prove the case for $n=2$: for every 
$$ y := \; \tikzfig{bijection}$$
we show there exists a unique pair of stackings $P_1\in \Part(p,a)$ and $P_2 \in \Part(a,r)$ such that $P = P_1 \circ P_2$ and $\SD((P_1,q'_1-p+a),(P_2,q'_2-a+r)) = 0$.

Upon inspection of the unique stacking computing the stacking number $\SN(P,q)$ (Example \eqref{vbmpq}) (or equivalently its formula \eqref{formulasn}), we observe that in order for the stacking difference to be zero, $m_{P_1,q_1+p-a}$ needs to first `incorporate' the elements $\eps^{r,r}$ and then the elements $\eps^{b,c}$ with $b<c$ with the highest $b$, which form the last part of the partition $P=(t_1,\ldots,t_r)$. Hence, if $q'_1 \leq q$, we set 
$$a:=p, \;  P_1:=(1,\ldots,1) \text{ and } P_2:=P.$$
If $q'_1 > q$, then let $i$ be the unique number such that $q_1'= q+ \sum_{j>i}(t_j-1) +d$ for some $0<d<t_i$. In this case, set $a:= \sum_{j>i}t_j +d +r-i$ and define
$$ P_1 :=   (\underbrace{1,\ldots,1}_{(\sum_{j<i}t_j+d)\text{-times}}, t_i - d+1, t_{i+1},\ldots,t_r ) \text{ and } P_2:= ( t_1, \ldots ,t_{i-1}, d, \underbrace{1,\ldots,1}_{(r-i)\text{-times}} )$$
It is then easy to see that $P_1 \circ P_2 = P$ and $\SN(P_1,0) + \SN(P_2,q) = \SN(P,q)$.
\end{proof}
For convenience, we identify $\eps^{a,b}= m_{P_i,q_i}$ with its pair $(P_i,q_i)$.  
\begin{lemma}\label{lemmaE1_3}
For every stacking $P\in \Part(p,r)$, the following are equivalent
\begin{enumerate}
\item $q\leq 1$,
\item there exists a unique sequence of elements $(\eps^{a_1,b_1}, \ldots, \eps^{a_k,b_k})$ having strictly positive stacking differences.
\end{enumerate}
Moreover, in this case $n=p-r+q$ and
$$ \eps^{a_i,b_i} = \eps^{j,p-i+1} \text{ if } \sum_{s<j} (t_s-1) < i \leq \sum_{s\leq j} (t_s-1) $$
If $q=1$, then $\eps^{a_{p-r+1},b_{p-r+1}} = \eps^{r,r}$.
\end{lemma}
\begin{proof}
First, we observe that 
$$\SD(\eps^{a,b}, \eps^{c,d} ) = \begin{cases} 0 & \text{ if } a>c \neq d \text{ or } a=b=c=d \\
1 & \text{ otherwise } \end{cases}$$
This is immediate upon inspection of the stackings in \eqref{towers}. Hence, in order to have strictly positive stacking numbers, the elements $\eps^{a_i,a_i}$ can only appear at the end of the sequence. In case, $q\geq 2$, this means that the sequence at least ends with $(\eps^{r,r},\eps^{r,r})$ which have zero stacking difference. Now, suppose $q\leq 1$, then having strictly positive stacking numbers also implies $1 \leq a_1 \leq \ldots \leq a_{p-r} \leq r$. Let $P =(t_1,\ldots,t_r)$, then we have $t_j-1$ is equal to the number of indices $i$ such that $a_i= j$. As a result, $P$ fully determines $a_1,\ldots,a_{p-r}$. If $q=0$, this suffices. If $q=1$, we again have that the last element of the sequence is $\eps^{r,r}$.
\end{proof}
\begin{opm}
Note that the total stacking number of the above sequence is $\sum_{i=1}^r i(t_i-1)+qr$.
\end{opm}

\begin{prop} \label{E1}
\begin{align*}
E^1 \cong \quad & \bigoplus_{\substack{P_0 \in \Part(p',r'),r'>0 \\ (t_1,\ldots,t_r) \in \Part(p,r) } } \quad k\; \tikzfig{E1_1} \quad \oplus \\
&\bigoplus_{\substack{P_0 \in \Part(p',r') \\ (t_1,\ldots,t_r) \in \Part(p,r) } } \quad k\; \tikzfig{E1_0} \quad \oplus \quad \bigoplus_{P_0 \in \Part(p',r')} \quad k\; \tikzfig{mcP}
\end{align*}
\end{prop}
\begin{opm}
Note that $E^1$ also splits according to arity $E^1(p,q,r)$ and thus $E^1(p,q,r) = 0$ if $q>1$.
\end{opm}
\begin{proof}
First, we observe that
$$ \widetilde{E}^0 \cong  \quad \bigoplus_{P_0 \in \Part(p',r')} \quad k\; \tikzfig{mcP}$$
with zero differential. Hence, it suffices to compute 
\begin{align*}
H(\overline{E}^0_{s,-},d^0) \cong \bigoplus_{ \substack{ P_0 \in \Part(p',r') \\ P_i \in \Part(p_{i-1},p_i)\\ \SD((P_i,q_i),(P_{i+1},q_{i+1}))>0} } H\left( s^{p'-r'+2k+1-\Thin(P_0)}\bigotimes_{i=1}^k \overline{E}^0_{s,-}(P_i,q_i)\right)
\end{align*}
due to Lemma \ref{lemmaE1_1}. From Lemma \ref{lemmaE1_2}, we deduce that $H(\overline{E}^0_{s,-}(P_i,q_i))$ vanishes, except if $p_{i-1}+q_i-p_i =1$. This splits in two cases: either $q_i= 1$ and $p_{i-1}=p_i$ or $q_i=0$ and $p_{i-1} = p_i +1$. Thus, the sole non-vanishing elements of $\overline{E}^0_{s,-}$ in cohomology are of the form
$$ \tikzfig{E1_2} $$
with strictly positive stacking numbers. The result then follows from Lemma \ref{lemmaE1_3}.
\end{proof}

\subsubsection{Computing $E^\infty$}

\begin{theorem}\label{laxnotkoszul}
The box operad $\Lax$ is not koszul.
\end{theorem}
\begin{proof}
For any $l\geq 0$, the differential $d^l$ of $E^l$ applies the differential $d_{\ka}$ and keeps only those terms whose total stacking number has increased by exactly $l$. Hence, for $l\geq 2$, $d^l$ is zero, except for
$$d^l\left(\;  \tikzfig{E1_11} \; \right ) = (-1)^{ \eps} \quad \tikzfig{Einfty} $$
when $l=p+p'-r$ with $\eps = (p-r)(p'-1)+p'$. As a result, both elements vanish in cohomology. As we can take $l$ arbitrarily large, every element with bottom box decorated by $m^c_{P_0,n}$ for $P_0 \in \Part(p',r')$ with $r'\leq 1$ vanishes in $E^\infty$.

We then obtain
\begin{align*}
H(\Lax^{\antishriek} \smsquaresub{\ka} \Lax) \cong  E^{\infty} \cong  \quad & k\; \tikzfig{unitKosAssoc} \quad \oplus  \bigoplus_{\substack{P_0 \in \Part(p',r'),r'> 1 \\ (t_1,\ldots,t_r) \in \Part(p,r) } } \quad k\; \tikzfig{E1_1} \quad \oplus \\
&\bigoplus_{\substack{P_0 \in \Part(p',r'),r'>1 \\ (t_1,\ldots,t_r) \in \Part(p,r) } } \quad k\; \tikzfig{E1_0} \quad \oplus \quad \bigoplus_{P_0 \in \Part(p',r'),r'>1} \quad k\; \tikzfig{mcP}
\end{align*}
Hence, $\Lax^{\antishriek} \smsquaresub{\ka} \Lax$ is not acyclic.
\end{proof}
%
\begin{opm}\label{notquasiiso}
Note that the connected twisted complex $\Lax^{\antishriek} \smsquareconnsub{\ka} \Lax$ is neither acyclic, nor quasi-isomorphic to $\Lax^{\antishriek} \smsquaresub{\ka} \Lax$.
\end{opm}

\subsubsection{Acyclic subcomplex}
Upon inspection of the complex, we observe that we have a splitting
\begin{equation} \label{splitting}
\Lax^{\antishriek} \smsquaresub{\overline{\iota}} \Lax\cong \Lax^{\antishriek}_{\leq 1} \smsquaresub{\overline{\iota}} \Lax\oplus \Lax^{\antishriek}_{> 1} \smsquaresub{\overline{\iota}} \Lax
\end{equation}
where
$$ \Lax^{\antishriek}_{\leq 1} \smsquaresub{\overline{\iota}} \Lax := \left \lbrace \quad \tikzfig{koslax_element} \quad \Big \vert \quad P_0 \in \Part(p',r') \text{ such that }r'\leq 1 \right \rbrace \sub \Lax^{\antishriek} \smsquaresub{\overline{\iota}} \Lax $$
and 
$$ \Lax^{\antishriek}_{> 1} \smsquaresub{\overline{\iota}} \Lax := \left \lbrace \quad \tikzfig{koslax_element} \quad \Big \vert \quad P_0 \in \Part(p',r') \text{ such that }r'> 1 \right \rbrace \sub \Lax^{\antishriek} \smsquaresub{\overline{\iota}} \Lax.$$

\begin{cor}\label{laxacyclic}
The complex $\Lax^{\antishriek}_{\leq 1} \smsquaresub{\overline{\iota}} \Lax$ is acyclic.
\end{cor}
\begin{proof}
Upon inspection of the proof of Theorem \ref{laxnotkoszul}.
\end{proof}
\subsection{The minimal model $\Lax_{\infty}$} \label{parparminimal}

\subsubsection{The dg box operad $\Om\Lax^{\antishriek}$}

For $P \in \Part(p,r)$ and $q\in \N$, define
$$\boxop^{\Om\Lax^{\antishriek}}_{(1)} (P,q) := \left \lbrace (S,P_1,\ldots,P_n) \quad \Big \vert \quad  \begin{aligned} &S \text{ thin-quadratic stacking consisting of } n \text{ boxes,} \\
 &\text{resulting box of } S \text{ has arity } (p,q,r) \\
& P_1,\ldots,P_n \in \Part \text{ such that } \mu_{\Hh_S}(P_1,\ldots,P_n) = P \end{aligned} \right \rbrace $$
where $\mu_{\Hh_S}$ denotes the composition along the horizontal composite graph $\Hh_S$ of $S$. 

The cobar dg box operad $\Om\Lax^{\antishriek}$ is generated by the elements
$$ m_{P,q} \in \Om \Lax \sh(p,q,r) \text{ of degree } \begin{cases} p-r+q-1 & r>0 \\ q-2 & r= 0 \end{cases}$$
for every partition $P \in \Part(p,r)$, with differential
\begin{equation} \label{diff_cobarlax}
d(m_{P,q}) = \sum_{\substack{(S,P_1,\ldots,P_n) \in \boxop^{\Om\Lax^{\antishriek}}_{(1)}(P,q)}} (-1)^{\eps} \mu_S(m_{P_1,q_1} \otimes \ldots \otimes m_{P_n,q_n} )
\end{equation}
where $(-1)^{ \eps} = \begin{cases} (-1)^{ S^{\dd} + |m_{P_1,q_1}^c| +1 } & \text{ if } S \text{ of type }I \\
(-1)^{ S^{\dd} +|m^c_{P_1,q_1}| } & \text{ if } S \text{ of type }II \\
(-1)^{S^{\dd} + 1 } &\text{ if } S \text{ of type }III \end{cases}$.

We have an associated morphism of dg box operads
$$f_{\ka}: \Om \Lax^{\antishriek} \longrightarrow \Lax : m_{P,q} \longmapsto \begin{cases} m & \text{ if } (P,q) = ((0),2) \\
f & \text{ if } (P,q) = ((1),1) \\
c & \text{ if } (P,q)= ((2),0) \\
0 & \text{ otherwise } \end{cases} $$
\begin{opm}
Theorem \ref{laxnotkoszul} shows that $f_\ka$ is not a quasi-isomorphism. This is already apparent in lower arities, for instance 
$$H_0( \Om \Lax^{\antishriek}(2,1,2))  = k \quad \tikzfig{x11} \quad \oplus k\quad \tikzfig{x1_x1} \quad  \text{, while } \Lax(2,1,1) = k \quad \tikzfig{m11} \quad.$$
\end{opm}

\subsubsection{The dg box operad $\Laxinf$}

For $p+q \geq 2$, define
$$\boxop^{\Laxinf}_{(1)} (p,q) := \left \lbrace S \in \boxop((p_1,q_1,r_1),\ldots,(p_n,q_n,r_n);(p,q,r))  \quad \Big \vert \quad  \begin{aligned} 
&S \text{ thin-quadratic,} \\
& r_i = 1-\delta_{0p_i} \text{ and } r = 1-\delta_{0p}
 \end{aligned} \right \rbrace $$
where $\delta$ denotes the Kronecker delta.
\begin{mydef}
Let $\Laxinf$ be the dg box operad generated by the elements
$$m_{p,q} \in \Laxinf(p,q,r) \text{ of degree } p+q-2$$
for $p+q \geq 2$, with differential
\begin{equation}\label{difflaxinf}
d(m_{p,q}) = \sum_{S \in \boxop^{\Laxinf}_{(1)}(p,q)} (-1)^{\eps} \mu_S(m_{p_1,q_1} \otimes \ldots \otimes m_{p_n,q_n})
\end{equation}
where $(-1)^\eps$ as in \eqref{diff_cobarlax}.
\end{mydef}

\begin{lemma}
\begin{enumerate}
\item $\Laxinf \hookrightarrow \Om \Lax^{\antishriek}: m_{p,q} \longmapsto m_{(p),q}$ is a monomorphism of graded box operads.\label{laxinf1}
\item $\Laxinf$ is a dg box operad, that is, $d^2 =0$. \label{laxinf2}
\item $p:\Om \Lax^{\antishriek} \twoheadrightarrow \Laxinf : m_{P,q} \longmapsto \begin{cases} m_{p,q} & r\leq 1 \\
0 & r >1 \end{cases}$ is an epimorphism of dg box operads.\label{laxinf3}
\end{enumerate}
\end{lemma}
\begin{proof}
As both operads are free as graded box operads, \eqref{laxinf1} is immediate.

In order to prove both \eqref{laxinf2} and \eqref{laxinf3}, it suffices to show $p$ commutes with $d^{\Laxinf}$ and $d^{\Om \Lax^{\antishriek}}$. Indeed, we then have that
$$(d^{\Laxinf})^2(m_{p,q}) = (d^{\Laxinf})^2p(m_{(p),q}) = p (d^{\Om \Lax^{\antishriek}})^2(m_{(p),q}) = 0.$$

In order to show that $pd^{\Om \Lax^{\antishriek}} = d^{\Laxinf}p$, we first observe that $d^{\Laxinf}(m_{p,q})= p d^{\Om \Lax^{\antishriek}}(m_{(p),q})$. Hence, it suffices to show the equality for $P \in \Part(p,r)$ such that $r>1$, that is,
$$pd^{\Om \Lax^{\antishriek}}(m_{P,q}) = 0 = d^{\Laxinf} p(m_{P,q})$$ 
Now, let $(S,P_1,\ldots,P_n) \in \boxop^{\Om \Lax^{\antishriek}}_{(1)}(P,q)$, then it suffices to show that $P_i \in \Part(p_i,r_i)$ with $r_i>1$ for some $i$. If $S$ is of type $II$, it is clear that $P_1= P$. If $S$ is of type $III$, then $\Hh_S$ consists of two connected components $\Hh_S^{D}$ and $\Hh_S^{U}$: $\Hh_S^{D}$ consists of the thin bottom box labelled $1$, and $\Hh_S^{U}$ consists of the other $n-1$ non-thin boxes. Assume every non-thin box $i$ has a single horizontal output, i.e. $r_i =1$. Then $\Hh_S^U$ is a tree and thus $P = \mu_{\Hh_S}(P_1,\ldots,P_n) \in \Part(p,1)$, a contradiction.
\end{proof}
\begin{opm}
Writing out equation \eqref{difflaxinf} for $m_{1,q}$, we see that $\Mor(\Assoc)_\infty$ is a dg box suboperad of $\Laxinf$. 
\end{opm}

\subsubsection{The complex $\Lax^{\antishriek} \smsquaresub{\overline{\iota}} \Laxinf$}

Let $\overline{\iota}: \Lax^{\antishriek} \longrightarrow \Laxinf$ be the twisting morphism corresponding to $p$. The associated complex $\Lax^{\antishriek} \smsquaresub{\overline{\iota}} \Laxinf$ has differential $d_{\overline{\iota}}= \sthin\inv \Lax^{\antishriek} \smsquare d_{\Laxinf} + d^r_{\overline{\iota}}$ and it is generated as $k$-module by elements of the form
\begin{equation}
\tikzfig{basiselementlaxinf}
\end{equation}
where $P_0 \in \Part(p',r')$ and $x_i \in \Laxinf(p_{i-1},q_i,p_i)$. Upon inspection of the differential $d_{\overline{\iota}}$, we have a splitting 
$$\Lax^{\antishriek} \smsquaresub{\overline{\iota}} \Laxinf\cong \Lax^{\antishriek}_{\leq 1} \smsquaresub{\overline{\iota}} \Laxinf\oplus \Lax^{\antishriek}_{> 1} \smsquaresub{\overline{\iota}} \Laxinf$$
analogous to \eqref{splitting}.

\begin{lemma}\label{laxinfacyclic}
The complex $\Lax^{\antishriek}_{\leq 1} \smsquaresub{\overline{\iota}} \Laxinf$ is acyclic.
\end{lemma}
\begin{proof}
We consider again the homotopy \eqref{homotopy1} from Lemma \ref{acyclic}. Indeed, it is still well-defined as every generator of $\Laxinf$ has a `representative' in $\Lax^{\antishriek}_{r\leq 1}$.
\end{proof}
\begin{opm}
First, observe that the candidate homotopy from \eqref{homotopy1} is also well-defined for $\Lax^{\antishriek} \smsquaresub{\overline{\iota}} \Laxinf$. However, it does not define a null homotopy.

Second, observe that $\Lax^{\antishriek}_{\leq 1} \smsquaresub{\iota} \Om \Lax^{\antishriek}$ is a subcomplex of $\Lax^{\antishriek} \smsquaresub{\iota} \Om \Lax^{\antishriek}$, but not a summand. In fact, Theorem \ref{laxnotkoszul} and the proof of Theorem \ref{thmlaxinf} show that it will not be acyclic. Indeed, the candidate homotopy \eqref{homotopy1} from Lemma \ref{acyclic} is ill-defined.
\end{opm}

\subsubsection{The quasi-isomorphism}

\begin{lemma}
We have a morphism of dg box operads
$$f:= f_{\ka}i: \Laxinf \longrightarrow \Lax : m_{p,q} \longmapsto \begin{cases} m & (p,q)= (0,2) \\ f & (p,q) = (1,1) \\ c & (p,q)= (2,0) \\
0 & \text{ otherwise } \end{cases}.$$
\end{lemma}
\begin{proof}
Observe that the sole thin-quadratic stackings are made up of the elements $m,f$ and $c$ are those featured in the relations of $\Lax$ (see Definition \ref{lax}). It is then easy to compute that the image of the boundary of the elements $m_{0,3},m_{1,2}, m_{2,1}$ and $m_{3,0}$ corresponds exactly to the relations \eqref{laxrel1}, \eqref{laxrel2}, \eqref{laxrel3} and \eqref{laxrel4} respectively.
\end{proof}

\begin{theorem}\label{thmlaxinf}
The dg box operad $\Laxinf$ is the minimal model of $\Lax$.
\end{theorem}
\begin{proof}
First, observe that $\Laxinf$ is free as a graded box operad and the projection of the differential to its module of generators $s_{\thin}^{-1}\Lax^{\antishriek} \overset{d}{\longrightarrow} \Laxinf \twoheadrightarrow s_{\thin}^{-1}\Lax^{\antishriek}$ is zero. It thus suffices to show that $f$ is a quasi-isomorphism. 

We observe that $ \Lax^{\antishriek}_{\leq 1} \smsquare f: \Lax^{\antishriek}_{\leq 1} \smsquaresub{\overline{\iota}} \Laxinf \longrightarrow \Lax^{\antishriek}_{\leq 1} \smsquaresub{\overline{\iota}} \Lax$ is a quasi-isomorphism of acyclic complexes (due to Corollary \ref{laxacyclic} and Lemma \ref{laxinfacyclic}). In order to conclude that $f$ is a quasi-isomorphism, we verify that the proof of the Inclined Box Operadic Lemma (Proposition \ref{operadiclemmainclined}) restricts well to the subcomplexes $\Lax^{\antishriek}_{\leq 1} \smsquaresub{\overline{\iota}} \Laxinf$ and $\Lax^{\antishriek}_{\leq 1} \smsquaresub{\overline{\iota}} \Lax$. This follows directly from the observation that the Lemmas \ref{aritylemma}, \ref{filtration} and \ref{lemspectral} still hold for the above mentioned subcomplexes. The proof then holds mutatis mutandis. We conclude that $f$ is indeed a quasi-isomorphism.
\end{proof}
Following \cite{markl2004}, we call $\Laxinf$-algebras \emph{strongly homotopy lax prestacks} or \emph{lax prestacks up to homotopy}. 

\begin{opm}
Observe that we have a commuting diagram
$$
\begin{tikzcd}
\Laxinf \arrow[r, "\sim"]                             & \Lax                         \\
\Mor(\Assoc)_\infty \arrow[r, "\sim"] \arrow[u, hook] & \Mor(\Assoc) \arrow[u, hook]
\end{tikzcd}$$
where the horizontal morphisms are quasi-isomorphism and the vertical morphisms injections.
\end{opm}

Let $\uuu$ be a small category whose morphisms have finite factorisations (see $\S \ref{parparalgebras}$).
\begin{mydef}\label{alglaxinf}
A lax prestack up to homotopy $\A$ over $\uuu$ consists of an element $m_{p,q}$ for every $p+q \geq 2$ such that 
$$d(m_{p,q}) = \sum_{S \in \boxop^{\Laxinf}_{(1)}(p,q)} (-1)^{\eps} \mu_S^{\End(\A)}(m_{p_1,q_1} \otimes \ldots \otimes m_{p_n,q_n}).$$
where $(-1)^\eps$ as in \eqref{diff_cobarlax}. 

The element $m_{p,q}$ consists of a collection of maps 
$$m_{p,q}^{\si,A} : \A(U_0)(A_0,A_1) \otimes \ldots \otimes \A(U_0)(A_{q-1},A_q) \longrightarrow \A(U_p)(u_p\st \ldots u_1\st A_0, (u_p \ldots u_1)\st A_q)$$
of degree $p+q-2$ for every $p$-simplex $\si = (U_0 \overset{u_1}{\rightarrow} \ldots \overset{u_p}{\rightarrow} U_p) \in N_{p}(\uuu)$ and $q$-tuple of objects $A= (A_0,\ldots,A_q)\in \A(U_0)$.
\end{mydef}
\begin{opm}
For a lax prestack up to homotopy $\A$, for every $U \in \uuu$ the data 
$$(\A(U), m_{0,q}^{U,-})_{q \geq 2}$$
defines a $\Ainf$-category, and for every morphism $U \overset{u}{\rightarrow} V$ in $\uuu$ the data 
$$(m_{1,q}^{u,-})_{q \geq 0}: \A(U) \longrightarrow \A(V)$$
defines a $\Ainf$-functor.
\end{opm}

\def\cprime{$'$}
\providecommand{\bysame}{\leavevmode\hbox to3em{\hrulefill}\thinspace}
\bibliography{Bibfile}
\bibliographystyle{amsplain}

\end{document}